\documentclass[11pt]{article}

\usepackage[margin=1in]{geometry}
\usepackage[T1]{fontenc}
\usepackage{mathpazo}
\usepackage{microtype}
\usepackage{setspace}
\usepackage{amsmath,amssymb,amsthm,mathtools}
\usepackage{bm}

\usepackage{graphicx}
\usepackage{booktabs}
\usepackage{tikz}
\usetikzlibrary{positioning,arrows.meta,calc}

\usepackage[numbers,sort&compress]{natbib}
\usepackage[colorlinks=true,linkcolor=blue!50!black,citecolor=blue!50!black,urlcolor=blue!50!black]{hyperref}
\usepackage[nameinlink,capitalize,noabbrev]{cleveref}

\usepackage{enumitem}
\setlist[itemize]{leftmargin=*,topsep=2pt,itemsep=2pt}
\setlist[enumerate]{leftmargin=*,topsep=2pt,itemsep=2pt}

\theoremstyle{plain}
\newtheorem{theorem}{Theorem}[section]
\newtheorem{proposition}[theorem]{Proposition}

\newtheorem{corollary}[theorem]{Corollary}

\theoremstyle{definition}
\newtheorem{definition}[theorem]{Definition}
\newtheorem{assumption}[theorem]{Assumption}
\newtheorem{example}[theorem]{Example}

\theoremstyle{remark}
\newtheorem{remark}[theorem]{Remark}

\DeclareMathOperator{\E}{\mathbb{E}}
\DeclareMathOperator{\Prob}{\mathbb{P}}
\DeclareMathOperator{\Var}{Var}
\DeclareMathOperator{\KL}{KL}

\newcommand{\R}{\mathbb{R}}
\newcommand{\N}{\mathbb{N}}
\newcommand{\X}{\mathcal{X}}
\newcommand{\F}{\mathcal{F}}

\newcommand{\1}{\mathbf{1}}

\newcommand{\loss}{\ell}
\newcommand{\woe}{W} 
\newcommand{\evar}{E} 

\newcommand{\acks}[1]{\section*{Acknowledgments}#1}
\newcommand{\BlackBox}{\rule{1.5ex}{1.5ex}}

\numberwithin{equation}{section}

\title{\Large\bfseries Bayes, E-values, and Testing}
\author{Nicholas G. Polson\thanks{Booth School of Business, University of Chicago. \texttt{ngp@chicagobooth.edu}}
\and Vadim Sokolov\thanks{Department of Systems Engineering and Operations Research, George Mason University. \texttt{vsokolov@gmu.edu}}
\and Daniel Zantedeschi\thanks{School of Information Systems, Muma College of Business, University of South Florida. \texttt{danielz@usf.edu}}}
\date{}

\begin{document}
\maketitle

\begin{abstract}%
E-values and E-processes (nonnegative supermartingales) provide
anytime-valid evidence for sequential testing via Ville's inequality,
yet their connection to Bayesian reasoning, representational structure,
and computational feasibility are often conflated in the literature.
We develop a typed framework that separates sequential evidence into
three layers: (i)~\emph{representation} (Radon--Nikod\'ym / likelihood-ratio
geometry), (ii)~\emph{validity} (supermartingale certificates under optional
stopping), and (iii)~\emph{decision} (boundary design and efficiency calibration).
Our main results are:
(a)~under log-loss and Bayes-risk minimization, the likelihood ratio is
the unique evidence representation within the coherent predictive subclass
(Theorem~\ref{thm:canonical_lr});
(b)~the likelihood-ratio stopping time satisfies
$\E_1[\tau_b] = (\log b)/\mu + O(\sqrt{\log b})$ under Cram\'er conditions,
while validity-only thresholds admit no such growth-rate guarantee
(Theorem~\ref{thm:moderate_deviation}, Proposition~\ref{prop:validity_vs_lr});
and (c)~regret-optimal codes (e.g., NML/MDL) do not in general yield valid
E-processes, while prequential codes do
(Proposition~\ref{prop:comp_boundary}).
Monte Carlo experiments confirm the theoretical predictions.
The framework applies to online model validation, adaptive experimentation,
conformal prediction, and sequential changepoint detection.
\end{abstract}

\medskip\noindent\textbf{Keywords:} E-values, e-processes, anytime-valid inference, sequential testing, likelihood ratios, online prediction, computational boundaries, conformal prediction, Bayes factors, supermartingales.

\section{Introduction}
\label{sec:intro}

A deployed machine learning system generates predictions continuously.
A classifier monitoring patient risk scores, an adaptive A/B test
allocating traffic to treatments, a conformal predictor issuing
coverage-guaranteed prediction sets: each accumulates data sequentially
and may be stopped, audited, or updated at any time.
Classical fixed-sample inference (p-values, confidence intervals) loses
its guarantees under such optional stopping \citep{Wald1947,JennisonTurnbull2000}.
How can we accumulate evidence over time without invalidating error control
under arbitrary, data-dependent stopping rules?

E-processes, nonnegative supermartingales with unit initial
expectation, provide a principled answer.
Ville's inequality \citep{Ville1939} guarantees that
$\Prob_{H_0}(\sup_t E_t \ge c) \le 1/c$ for any stopping time,
delivering anytime-valid Type~I control without
$\alpha$-spending or sample-size commitments.
This framework has found applications in safe testing
\citep{GrunwaldEtAl2024}, adaptive experimentation
\citep{HowardEtAl2021, WaudbySmithRamdas2024},
game-theoretic probability \citep{ShaferVovk2019},
and conformal prediction \citep{Vovk2020conformal}.

Despite these advances, three questions remain open.
First, when does an E-process admit a likelihood-ratio or
generalized-likelihood-ratio representation (including composite null
constructions such as numeraire E-variables), and what forces this
structure?
Second, how do validity-only thresholds (Markov/Ville) compare with
likelihood-ratio-optimal boundaries in terms of statistical efficiency?
Third, which computational objects (codes, description lengths,
regret-optimal predictors) can be converted into valid E-processes,
and which provably cannot?

These questions are difficult to address simultaneously because
existing treatments tend to blur the distinction between \emph{what}
an evidence measure is (a likelihood ratio? a betting score? a
code-length difference?), \emph{why} it is valid (supermartingale
property? Kraft inequality? exchangeability?), and \emph{how} it is
used (fixed threshold? sequential boundary? Bayes-risk-optimal
cutoff?). Conflating these roles leads to confusion in practice:
a code-length function can look like an E-value without being one
\citep{Grunwald2007}, and a valid E-process can have zero statistical
power if its boundary is chosen without regard to the underlying
representation \citep{RamdasEtAl2023}.
The remedy is a modular decomposition that keeps each role
separate, much as the bias-variance decomposition separates
estimation error from model complexity, or as the PAC learning
framework separates sample complexity from hypothesis class
expressiveness \citep{ShaferVovk2019}.

This paper is not a survey of e-values; it contributes new canonicality,
moderate-deviation stopping, and code-to-e obstruction results, organized
by a typed interface that makes their logical interdependence precise.
We address all three questions through a framework that separates
representation, validity, and decision into formally distinct layers.

\subsection{The Probabilistic Landscape}
\label{sec:landscape}

Table~\ref{tab:landscape} summarizes how three classical
structures describe the information in a sample path:
Sanov's large-deviation rate, the E-process growth rate
$n^{-1}\log E_n \to D_{\KL}(\mu \| P_0)$
(Proposition~\ref{prop:e_growth}), and posterior contraction.
Only the E-process column is needed for the main development.
The key efficiency distinction is the gap between
calibration-only $1/c$ control (Markov/Ville) and
representation-aware exponential detection at rate
$(\log b)/D_{\KL}$
\citep{Cover1969, HellmanCover1970, Lindley1961};
this gap is formalized in
Theorem~\ref{thm:moderate_deviation} and
Proposition~\ref{prop:validity_vs_lr}.
The broader probabilistic landscape (de~Finetti, inverse Sanov,
martingale posteriors, deviation regimes) is self-contained in
Appendix~\ref{app:landscape}.

\begin{table}[t]
\centering
\caption{Three frameworks describing the information in a single sample path
about the directing measure $\mu$.}
\label{tab:landscape}
\small
\begin{tabular}{@{}llll@{}}
\toprule
\textbf{Framework} & \textbf{Primary object} & \textbf{Path convergence} & \textbf{Rate} \\
\midrule
Sanov / LDP & Empirical measure $L_n$ & $D_{\KL}(\cdot \| P_0)$ rate fn & Exponential \\
E-process & Test martingale $E_n$ & $n^{-1}\!\log E_n \to D_{\KL}(\mu \| P_0)$ & Linear in $n$ \\
Mart.\ posterior & $\Pi_n$ (measure-valued) & $\Pi_n \to \delta_\mu$ a.s. & Posterior contraction \\
\bottomrule
\end{tabular}
\end{table}

\subsection{Contributions}
\label{sec:contributions}

We establish the following results.

\begin{enumerate}
\item \textbf{Canonicality under log-loss (Theorem~\ref{thm:canonical_lr}).}
Under coherent prediction and log-loss Bayes risk, the Fubini/tower
decomposition identifies the likelihood ratio as the unique canonical
evidence representation.
The Bayes-risk-optimal test is a threshold rule on the likelihood-ratio
process; general E-process constructions are valid but need not recover
this optimal rejection region.

\item \textbf{Moderate-deviation stopping boundary
(Theorem~\ref{thm:moderate_deviation}, Proposition~\ref{prop:validity_vs_lr}).}
Under explicit Cram\'er conditions on the log-likelihood increments,
we prove that the LR stopping time satisfies
$\E_1[\tau_b] = (\log b)/\mu + O(\sqrt{\log b})$
with $(\tau_b - (\log b)/\mu)/\sqrt{\log b} = O_p(1)$.
A companion structural separation result shows that generic
E-processes lacking LR structure admit no exponential growth-rate
characterization, confining them to the $O(1/b)$ calibration scale.

\item \textbf{Computational obstruction
(Proposition~\ref{prop:comp_boundary}).}
Regret-optimal codes (NML/MDL) do not in general yield valid E-processes:
their normalizing constants depend on the full sample size, violating
the sequential factorization required for the supermartingale property.
We characterize sufficient conditions for conversion (prequential codes)
and identify the structural boundary between coding-theoretic and
probabilistic evidence.

\item \textbf{Evidence-class algebra and maximality
(Theorem~\ref{thm:class_algebra_informal}, Proposition~\ref{prop:maximality}).}
The class of E-processes forms a convex set closed under scaling by
$c \in (0,1]$, predictable stopping, countable mixtures, and Bayesian
marginalization, and is the \emph{largest} such class preserving Ville control.
These compositional properties support modular construction of evidence
in online pipelines.

\item \textbf{Scoring-rule uniqueness
(Proposition~\ref{prop:log_uniqueness_scoring}).}
Among strictly proper scoring rules, log-loss is the unique rule whose
induced evidence ratios form supermartingales.
This conceptual boundary theorem delineates the scope of the typed framework.

\item \textbf{Conformal e-prediction
(Proposition~\ref{prop:conformal_e}).}
Under exchangeability, nonconformity-based E-values provide anytime-valid
coverage guarantees for sequential prediction, connecting the typed
framework to distribution-free online learning.
\end{enumerate}

\subsection{Related Work}
\label{sec:related}

Our framework builds on several lines of research.

\noindent\textit{E-values and safe testing.}
The modern E-value framework originates with \citet{VovkWang2021} and
\citet{GrunwaldEtAl2024}, who formalize E-variables as calibrated
measures of evidence with anytime-valid guarantees.
\citet{RamdasEbook} provides a comprehensive treatment of e-processes,
including the distinction between test supermartingales and general
e-processes and constructions for composite testing.
For composite nulls, the numeraire and reverse information projection
\citep{LarssonRamdasRuf2025} provide a canonical representation-layer
object $E^\star$, which fits directly into our typed interface.
Our contribution is orthogonal: rather than constructing new
E-processes, we characterize the \emph{interfaces} between layers
(representation$\to$validity, validity$\to$decision), identifying when
likelihood-ratio structure is forced and quantifying the efficiency
gap when it is absent.

\noindent\textit{Sequential testing and confidence sequences.}
\citet{HowardEtAl2021} develop time-uniform confidence sequences via
sub-$\psi$ conditions; \citet{WaudbySmithRamdas2024} construct
confidence sequences by betting; \citet{KaufmannKoolen2021} analyze
mixture martingales for sequential tests via hierarchical priors.
These methods operate at the validity layer of our framework;
our moderate-deviation stopping theorem (Theorem~\ref{thm:moderate_deviation})
complements them by quantifying the gap between Markov/Ville
and likelihood-ratio calibration.

\noindent\textit{Online learning and adaptive inference.}
Adaptive data analysis requires evidence that remains valid under
data-dependent decisions.
\citet{GrunwaldEtAl2024} connect E-values to always-valid
$p$-values; \citet{RamdasEtAl2023} develop game-theoretic testing.
Our computational obstruction result
(Section~\ref{sec:computational_limits}) is relevant to online model
selection via MDL \citep{Grunwald2007}, showing when code-based evidence
fails sequential validity.

\noindent\textit{Conformal prediction.}
\citet{VovkGammerShaf2005} introduce conformal prediction;
\citet{Vovk2020conformal} develops conformal e-prediction.
\citet{GibbsCandesConformal2024} extend conformal methods to
distribution shift; \citet{OliveiraEtAl2024} extend split conformal
prediction to non-exchangeable data with explicit coverage penalties.
Our typed separation clarifies the relationship between conformal
coverage (marginal validity) and e-process control
(supermartingale validity); see Section~\ref{sec:conformal}.

\noindent\textit{Coding and MDL.}
\citet{Cover1985} identifies the deep link between Kolmogorov complexity,
data compression, and statistical inference.
\citet{Rissanen1978} and \citet{Grunwald2007} develop the MDL principle;
\citet{Shtarkov1987} introduces NML.
\citet{Dawid1984} proposes the prequential principle.
Our computational boundary theorem formalizes why NML codes fail as
E-processes while prequential codes succeed.

\section{Sequential Evidence Framework}
\label{sec:framework}

We formalize the mathematical objects and their relationships.
All definitions require a filtered probability space
$(\Omega, \F, (\F_t)_{t \ge 0}, \Prob_{H_0})$ under the null hypothesis.

\subsection{E-Variables and E-Processes}

\begin{definition}[E-variable]
\label{def:evar}
A nonnegative random variable $\evar$ is an \emph{E-variable} for $H_0$ if
$\E_{H_0}[\evar] \le 1$.
\end{definition}

\begin{definition}[E-process]
\label{def:eproc}
A nonnegative adapted process $(E_t)_{t \ge 0}$ is an \emph{E-process} for $H_0$ if it is a supermartingale under $H_0$ with $\E_{H_0}[E_0] \le 1$. Product constructions from sequential E-variables yield E-processes, but do not exhaust the class \citep[Def.~7.3]{RamdasEbook}.
\end{definition}

The fundamental inference property follows from Markov's inequality.

\begin{theorem}[Markov bound for E-values]
\label{thm:markov_e}
If $\evar$ is an E-variable for $H_0$, then $\Prob_{H_0}(\evar \ge c) \le 1/c$ for all $c > 0$.
\end{theorem}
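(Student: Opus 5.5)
The plan is to apply Markov's inequality directly, using only Definition~\ref{def:evar}: $\evar \ge 0$ almost surely and $\E_{H_0}[\evar] \le 1$. Fix $c > 0$. The key step is the pointwise indicator bound
\[
c\,\1\{\evar \ge c\} \le \evar \quad \text{on } \Omega .
\]
On the event $\{\evar \ge c\}$ the left side equals $c \le \evar$. Off that event the left side is $0 \le \evar$, and this is exactly where nonnegativity of $\evar$ is used.

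Next I would take $\E_{H_0}$-expectations of both sides. Monotonicity of the integral is legitimate here: both sides are nonnegative measurable functions, so expectations are well defined in $[0,\infty]$ and no integrability assumption beyond the definition is needed. This gives
\[
c\,\Prob_{H_0}(\evar \ge c) \le \E_{H_0}[\evar] \le 1 .
\]
Dividing by $c > 0$ yields the claim $\Prob_{H_0}(\evar \ge c) \le 1/c$.

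There is no genuine obstacle. The only point requiring care is to note explicitly where each hypothesis enters: nonnegativity is needed for the indicator bound off the event, and $c>0$ is needed for the division. I would also remark that for $c \le 1$ the bound is trivial, since then $1/c \ge 1$. Finally, I would note that the same argument, applied to $E_\tau$ for a stopping time $\tau$ via optional stopping, is what later upgrades to Ville's inequality for E-processes. That extension is not needed here.
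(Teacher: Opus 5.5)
Your proof is correct and is the paper's approach. The paper gives no separate proof and simply says the bound "follows from Markov's inequality"; your pointwise bound $c\,\1\{\evar \ge c\} \le \evar$, followed by taking $\E_{H_0}$-expectations and dividing by $c>0$, is exactly that argument written out, with nonnegativity and $c>0$ each used where you say.
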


\begin{theorem}[Ville's inequality]
\label{thm:ville}
If $(E_t)_{t \ge 0}$ is an E-process for $H_0$, then for any stopping time $\tau$ (possibly infinite),
\[
\Prob_{H_0}\!\left(\sup_{t \ge 0} E_t \ge c\right) \le \frac{1}{c}.
\]
\end{theorem}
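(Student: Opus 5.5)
The plan is the standard optional-stopping argument applied to the first crossing time of level $c$, followed by a limiting step from $\sup_{t\le n}$ to $\sup_{t\ge 0}$. I treat discrete time $t\in\N$ first. The continuous-time case reduces to it by right-continuity of paths, which I assume, as is standard.

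\textbf{Step 1 (first crossing).} Fix $c>0$ and define the hitting time
\[
\sigma=\inf\{t\ge 0: E_t\ge c\},
\]
with $\inf\emptyset=\infty$. Because $(E_t)$ is adapted, $\sigma$ is a stopping time. For each finite horizon $n$, the bounded time $\sigma\wedge n$ is also a stopping time. The stopped process $(E_{t\wedge\sigma})$ is again a nonnegative supermartingale; this is the optional stopping theorem for bounded stopping times, which needs no integrability beyond nonnegativity. Hence
\[
\E_{H_0}[E_{\sigma\wedge n}]\le \E_{H_0}[E_0]\le 1 .
\]

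\textbf{Step 2 (Markov on the stopped value).} On the event $\{\sigma\le n\}$ we have $E_{\sigma\wedge n}=E_\sigma\ge c$. Combining this with nonnegativity gives
\[
c\,\Prob_{H_0}(\sigma\le n)\le \E_{H_0}\bigl[E_{\sigma\wedge n}\1\{\sigma\le n\}\bigr]\le 1 .
\]
The event $\{\sigma\le n\}$ is exactly $\{\max_{t\le n}E_t\ge c\}$. This is Theorem~\ref{thm:markov_e} applied to the E-variable $E_{\sigma\wedge n}$.

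\textbf{Step 3 (let $n\to\infty$).} The events $\{\max_{t\le n}E_t\ge c\}$ increase to $\{\exists t: E_t\ge c\}$, so continuity from below yields $\Prob_{H_0}(\exists t: E_t\ge c)\le 1/c$.

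\textbf{Step 4 (the main obstacle).} The delicate point is that $\{\sup_t E_t\ge c\}$ can exceed $\{\exists t: E_t\ge c\}$, because the supremum may equal $c$ without being attained. To handle this, apply Step 3 at every level $c'<c$. Since $\{\sup_t E_t\ge c\}\subseteq\{\exists t: E_t\ge c'\}$, we get $\Prob_{H_0}(\sup_t E_t\ge c)\le 1/c'$, and letting $c'\uparrow c$ gives the bound $1/c$.

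In continuous time, the same argument applies on finite dyadic grids. Right-continuity makes the supremum over the dyadics equal the supremum over all $t$, and monotone limits finish the argument.

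Finally, the arbitrary stopping time $\tau$ in the statement follows as a corollary: $E_\tau\le\sup_t E_t$ on $\{\tau<\infty\}$, so $\Prob_{H_0}(E_\tau\ge c)\le 1/c$ for any $\tau$.
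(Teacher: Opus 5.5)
Your argument is correct and follows the route the paper has in mind. The paper gives no separate proof of Theorem~\ref{thm:ville}; in the proof of Theorem~\ref{thm:class_algebra_informal}(c) it identifies the bound with Doob's maximal inequality for the nonnegative supermartingale $(E_t)$, and your optional stopping at the first crossing time, followed by Markov and a monotone limit, is exactly the standard proof of that inequality. The paper leaves implicit the points you handle explicitly: a supremum that is not attained (Step 4), the continuous-time reduction via right-continuity, and reading the vacuous $\tau$ in the statement as $\Prob_{H_0}(E_\tau \ge c) \le 1/c$.
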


Ville's inequality is the compositional guarantee that makes E-processes suitable for online monitoring: the error bound holds regardless of the stopping rule, enabling valid inference under continuous data collection.

\noindent\textit{ML interpretation.}
In online model validation, an E-process represents the cumulative evidence against a null model $H_0$ (e.g., ``the deployed classifier has calibrated predictions''). Ville's inequality guarantees that a false alarm (declaring the model miscalibrated when it is not) occurs with probability at most $1/c$ regardless of when or why monitoring is stopped.

\subsection{The Log-Score Bridge}
\label{sec:log_score}

The negative logarithm $\mathcal{L}: P \mapsto -\log P$ converts multiplicative probability into additive evidence.

\begin{definition}[Log-score map]
\label{def:log_score_map}
The \emph{log-score map} $\mathcal{L}$ sends a probability measure $P$ to its pointwise negative logarithm: $\mathcal{L}(P)(x^n) = -\log P(x^n)$.
\end{definition}

\begin{proposition}[Monoidal bridge]
\label{prop:monoidal}
Let $P, Q$ be probability measures on compatible spaces.
\begin{enumerate}
\item \emph{Product $\mapsto$ sum:} $\loss_{P \otimes Q}(x^n, y^m) = \loss_P(x^n) + \loss_Q(y^m)$.
\item \emph{Mixture $\mapsto$ log-sum-exp:} For $\bar{P} = \int P_\theta \, \pi(d\theta)$,
$\loss_{\bar{P}}(x^n) = -\log \int \exp(-\loss_{P_\theta}(x^n)) \, \pi(d\theta)$.
\end{enumerate}
\end{proposition}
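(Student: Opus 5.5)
Both identities are direct unpackings of the definitions, so the plan is to compute each side from the log-score map of Definition~\ref{def:log_score_map}, $\loss_P(x^n) = -\log P(x^n)$. The only interpretive choice is to read $P(x^n)$ as the density (or mass) of $P$ at $x^n$ with respect to a fixed dominating reference measure. For the product, the reference is the product of the reference measures for $P$ and $Q$; for the mixture, it is a single reference that dominates every $P_\theta$. Throughout, I use the convention $-\log 0 = +\infty$, so both identities hold in $[0,\infty]$, or in $(-\infty,\infty]$ for densities.

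For part~1, I first record that the product measure $P \otimes Q$ on the product space has density $p(x^n)q(y^m)$ with respect to the product reference measure. This is the defining property of product measures: it holds on rectangles and extends by the $\pi$--$\lambda$ theorem, or equivalently by Fubini--Tonelli applied to nonnegative integrands. Taking $-\log$ of this factorization and using $\log(ab) = \log a + \log b$ gives
\[
\loss_{P\otimes Q}(x^n,y^m) = -\log p(x^n) - \log q(y^m) = \loss_P(x^n) + \loss_Q(y^m).
\]
If either factor is zero, both sides equal $+\infty$, which the convention covers.

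For part~2, I first need joint measurability of $(\theta,x^n) \mapsto p_\theta(x^n)$, so that $\bar P$ is a well-defined measure. Under that assumption, Tonelli's theorem shows that $\bar P$ has density $\bar p(x^n) = \int p_\theta(x^n)\,\pi(d\theta)$: for any measurable set $A$, integrate the nonnegative function $p_\theta(x)$ over $A \times \Theta$ and swap the order of integration. Next I substitute $p_\theta(x^n) = \exp(-\loss_{P_\theta}(x^n))$ inside the integral, with $\exp(-\infty) = 0$. Taking $-\log$ of the resulting expression for $\bar p$ then gives the stated log-sum-exp formula immediately.

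The main obstacle is bookkeeping rather than anything substantive, and it sits in part~2. There are two points. First, the densities are defined only up to null sets, so the identity holds $\bar P$-almost everywhere, or everywhere if one fixes the version $\bar p$ defined by the integral. Second, one needs the joint-measurability hypothesis, which I would state as a standing assumption on the family $\{P_\theta\}$. In the discrete case both issues vanish, and both parts reduce to pointwise algebra on probability masses.
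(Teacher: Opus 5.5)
Your proposal is correct and matches the paper's approach. The paper states the proposition without proof, as an immediate consequence of Definition~\ref{def:log_score_map}: factorize $P\otimes Q$, write $\bar P$ as $\int P_\theta\,\pi(d\theta)$, and apply $-\log$. Your measure-theoretic bookkeeping (consistent dominating reference measures, joint measurability of $(\theta,x^n)\mapsto p_\theta(x^n)$, and taking $\bar p$ to be the integral) goes beyond what the paper spells out, and the only thing worth adding is that the reference measures should be $\sigma$-finite so that the product measure and Tonelli's theorem are available.
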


Property~(1) underlies multiplication of independent E-values; property~(2) underlies Bayes factors as integrated likelihood ratios.

\subsection{Weight of Evidence}
\label{sec:woe}

\begin{definition}[Weight of evidence \citep{Good1950}]
\label{def:woe}
For hypotheses $H_1, H_0$ with predictive distributions $P_1, P_0$,
\[
\woe(x^n) := \log \frac{P_1(x^n)}{P_0(x^n)} = \loss_{P_0}(x^n) - \loss_{P_1}(x^n).
\]
\end{definition}

Positive weight indicates evidence for $H_1$; negative for $H_0$.
\citet{Good1950} showed this is the unique measure that is additive across independent observations and consistent with the likelihood principle.
Its exponential $\exp(\woe(x^n)) = P_1(x^n)/P_0(x^n)$ is the likelihood-ratio E-value.

\subsection{The Typed Calculus: Structural Overview}
\label{sec:typed_overview}

The framework developed in this paper separates sequential evidence into three formally distinct layers.
Figure~\ref{fig:typed_calculus} provides a structural map; the remainder of the paper establishes theorems at each layer and at the interfaces between them.

\begin{figure}[ht]
\centering
\begin{tikzpicture}[
  layerbox/.style={draw, rectangle, minimum width=10cm, minimum height=2.2cm,
                   align=left, inner sep=8pt, font=\small},
  arrowlabel/.style={font=\small\itshape, fill=white, inner sep=2pt},
  >=Stealth
]

\node[layerbox] (rep) at (0, 6.4) {%
  \begin{minipage}{9.4cm}
  \textbf{Representation Layer}\\[3pt]
  \emph{Objects:} Probability measures $P$;\;
  likelihood ratios $dQ/dP$;\; log-loss geometry.\\[2pt]
  \emph{Results:} Canonicality (Thm.~\ref{thm:canonical_lr});\;
  code-to-E obstruction (Prop.~\ref{prop:comp_boundary}).
  \end{minipage}
};

\node[layerbox] (val) at (0, 3.2) {%
  \begin{minipage}{9.4cm}
  \textbf{Validity Layer}\\[3pt]
  \emph{Objects:} E-variables (Def.~\ref{def:evar});\;
  E-processes (Def.~\ref{def:eproc});\; supermartingales.\\[2pt]
  \emph{Results:} Ville's inequality (Thm.~\ref{thm:ville});\;
  evidence-class algebra (Thm.~\ref{thm:class_algebra_informal}).
  \end{minipage}
};

\node[layerbox] (dec) at (0, 0) {%
  \begin{minipage}{9.4cm}
  \textbf{Decision Layer}\\[3pt]
  \emph{Objects:} Stopping time $\tau$;\; threshold $b$;\;
  loss parameters $(L_{10}, L_{01})$.\\[2pt]
  \emph{Results:} Moderate-deviation boundary (Thm.~\ref{thm:moderate_deviation});\;
  structural separation (Prop.~\ref{prop:validity_vs_lr}).
  \end{minipage}
};

\draw[->, thick] ([xshift=1cm]rep.south) --
  node[arrowlabel, right=4pt] {induces (when coherent)}
  ([xshift=1cm]val.north);
\draw[->, thick] ([xshift=1cm]val.south) --
  node[arrowlabel, right=4pt] {requires decision rule}
  ([xshift=1cm]dec.north);

\coordinate (repWL) at ([xshift=-1.8cm]rep.west);
\coordinate (decWL) at ([xshift=-1.8cm]dec.west);
\draw[->, thick, dashed]
  (rep.west) -- (repWL) -- (decWL) -- (dec.west);
\node[arrowlabel, left=2pt, text width=2.2cm, align=center]
  at ($(repWL)!0.5!(decWL)$) {Bayes-risk\\[-1pt]optimal\\[-1pt]boundary};

\end{tikzpicture}
\caption{The typed calculus of sequential evidence.
Solid arrows indicate the canonical path: coherent representation induces
validity (supermartingale certification), which requires a decision rule
(boundary selection) for inference.
The dashed arrow marks the direct Bayes-risk-optimal boundary
(Section~\ref{sec:decision_cutoffs}), bypassing validity-only calibration.
Optimality at one layer does not imply optimality at another
(see Appendix~\ref{app:algebra}).}
\label{fig:typed_calculus}
\end{figure}
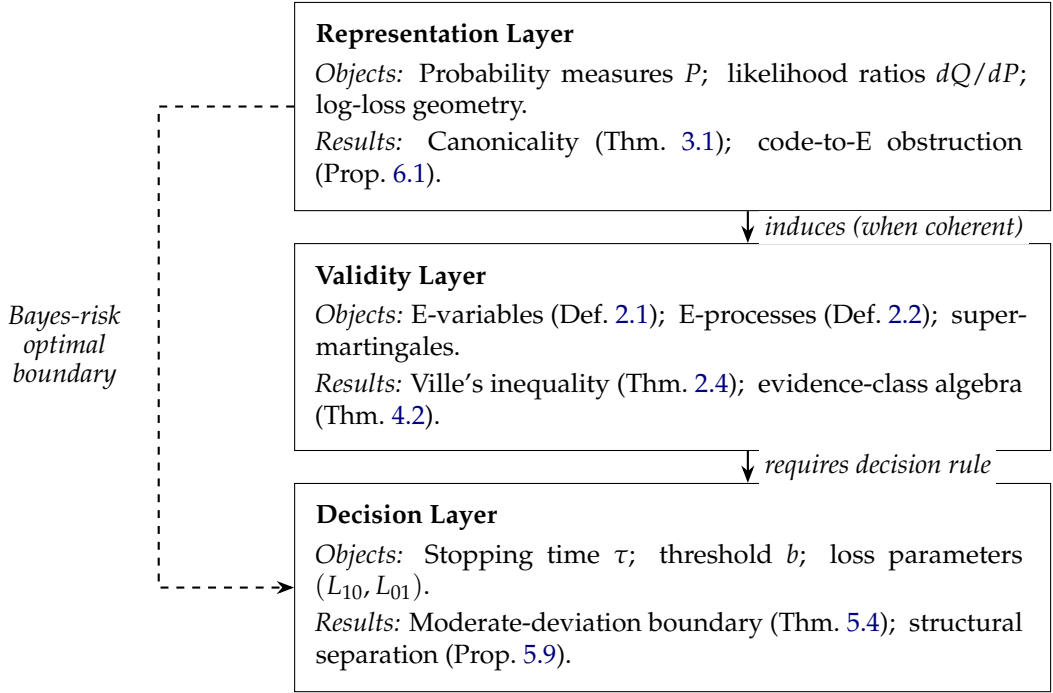

The three layers correspond to logically distinct mathematical properties:
(i)~\emph{Representation:} existence of a Radon--Nikod\'ym derivative $dQ/dP$
and the log-loss geometry that forces likelihood-ratio structure
(Section~\ref{sec:canonicality});
(ii)~\emph{Validity:} the supermartingale property under $H_0$, certifying
anytime-valid error control (Section~\ref{sec:composition});
(iii)~\emph{Decision:} choice of stopping boundary $\tau_b$, governed by
moderate-deviation efficiency (Section~\ref{sec:boundary}).
The separation is strict: each property may be specified independently, and
optimality at one layer does not imply optimality at another
(Proposition~\ref{prop:layer_separation}).
The computational obstruction (Proposition~\ref{prop:comp_boundary}) lives at
the representation--validity interface; boundary efficiency
(Theorem~\ref{thm:moderate_deviation}) lives at the validity--decision
interface.

\section{Canonicality Under Log-Loss}
\label{sec:canonicality}

We establish that under coherent prediction and log-loss, the likelihood
ratio is the unique canonical evidence representation.
This result identifies when E-processes must have LR structure and when
they need not.

\subsection{Bayes Risk and the Fubini Decomposition}
\label{sec:fubini}

Under log-loss, the Bayes risk of a predictive specification $P$ is the expected cumulative loss $\E[\loss_P(X^n)]$, where $\loss_P(x^n) := -\log P(x^n)$.
For two specifications $P_0$ (null) and $P_1$ (alternative), the log-loss difference is Good's weight of evidence:
\[
\loss_{P_0}(x^n) - \loss_{P_1}(x^n) = \log \frac{P_1(x^n)}{P_0(x^n)}.
\]
When $P_i$ are specified via one-step predictive kernels $p_i(\cdot | x^{t-1})$, the tower property yields the sequential decomposition
\[
\log \frac{P_1(X^n)}{P_0(X^n)} = \sum_{t=1}^n \log \frac{p_1(X_t | X^{t-1})}{p_0(X_t | X^{t-1})}.
\]
Exponentiating produces the likelihood-ratio process, the canonical bridge from coherent prediction to sequential evidence.

\begin{theorem}[Canonical sequential evidence under log-loss]
\label{thm:canonical_lr}
Consider testing $H_0$ versus $H_1$, where either hypothesis may be
composite. Let $\Pi_0, \Pi_1$ be priors over the respective model classes
and define the prior-predictive mixtures
\[
M_j^{(n)}(\cdot) = \int P_F^{\otimes n}(\cdot) \, \Pi_j(dF), \quad j \in \{0, 1\}.
\]
Assume mutual absolute continuity. Under log-loss Bayes risk, the
Bayes-risk-optimal test is a threshold rule on the likelihood-ratio process
\[
\Lambda_n(X^n) := \frac{dM_1^{(n)}}{dM_0^{(n)}}(X^n),
\]
and the canonical multiplicative evidence is $E_n = \Lambda_n$, with additive evidence $W_n = \log \Lambda_n$.
\end{theorem}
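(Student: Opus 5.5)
The plan has three parts: a Neyman--Pearson/Bayes-decision argument for the threshold rule, a tower-property argument for the E-process property, and a log-loss argument for canonicality. I first reduce the composite problem to a simple-versus-simple one. By Fubini, integrating out $F$ against $\Pi_j$ shows that for any test $\phi_n(X^n)\in[0,1]$ the Bayes risk under hypothesis $j$ depends on $\Pi_j$ only through the mixture $M_j^{(n)}$:
\[
\int \E_{P_F^{\otimes n}}[\phi_n]\,\Pi_j(dF)=\E_{M_j^{(n)}}[\phi_n].
\]
The Bayes risk of $\phi_n$, with prior weights $\pi_0,\pi_1$ and losses $L_{10},L_{01}$, is therefore
\[
\pi_0 L_{10}\,\E_{M_0^{(n)}}[\phi_n]+\pi_1 L_{01}\,\E_{M_1^{(n)}}[1-\phi_n].
\]
Mutual absolute continuity lets me write $dM_1^{(n)}=\Lambda_n\,dM_0^{(n)}$. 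The risk then becomes
\[
\pi_1L_{01}+\E_{M_0^{(n)}}\bigl[\phi_n(\pi_0L_{10}-\pi_1L_{01}\Lambda_n)\bigr].
\]
This is minimized pointwise by taking $\phi_n=\1\{\Lambda_n>k\}$ with $k=\pi_0L_{10}/(\pi_1L_{01})$, where ties are allowed to be randomized. This is the Bayes/Neyman--Pearson step, and it establishes the threshold-rule claim.

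Next I verify that $\Lambda_n$ is a legitimate E-process for the Bayes-marginal null. Under $M_0$, the tower decomposition in Section~\ref{sec:fubini} with one-step predictive kernels $m_j(x_t\mid x^{t-1})$ gives $\Lambda_n=\prod_{t\le n} m_1(X_t\mid X^{t-1})/m_0(X_t\mid X^{t-1})$. This is a product of conditional densities with conditional $M_0$-expectation at most $1$, so $\E_{M_0}[\Lambda_n\mid\F_{n-1}]\le\Lambda_{n-1}$ and $\Lambda_0=1$. Hence $\Lambda_n$ satisfies Definition~\ref{def:eproc}, and Theorem~\ref{thm:ville} applies. Taking logarithms and using Proposition~\ref{prop:monoidal} identifies $W_n=\log\Lambda_n$ as Good's weight of evidence (Definition~\ref{def:woe}) with $P_j=M_j^{(n)}$. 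This gives the additive form.

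For canonicality, I will argue that any statistic inducing the same Bayes-optimal decisions for all loss ratios $k$ must generate the same level sets as $\Lambda_n$. It must therefore be a strictly increasing function of $\Lambda_n$, $M_0$-a.s. Among increasing transforms $g$, I then impose two requirements: the one-step factorization required for the supermartingale property with equality under the coherent alternative, namely $\E_{M_0}[E_n\mid\F_{n-1}]=E_{n-1}$ for every pair of predictive kernels, and additivity of $\log E_n$ across independent blocks, which corresponds to property~(1) of Proposition~\ref{prop:monoidal}. These force $g$ to be multiplicative, $g(ab)=g(a)g(b)$, and also to satisfy $\E_{M_0}[g(\Lambda_1)]=1$ for all alternatives. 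That leaves $g(x)=x^\lambda$ with normalization forcing $\lambda=1$. Equivalently, $E_n$ is the log-optimal (growth-rate maximizing under $M_1$) E-variable, since $\E_{M_1}[\log E]\le \E_{M_1}[\log \Lambda_n]$ by Gibbs' inequality for any $E$ with $\E_{M_0}[E]\le1$.

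The main obstacle is this uniqueness step. "Canonical" is not fully specified in the statement, so I will fix it as the log-optimality characterization. Uniqueness then follows from the strict concavity of $\log$: $\E_{M_1}[\log(E/\Lambda_n)]\le\log\E_{M_0}[E]\le 0$, with equality iff $E=\Lambda_n$ a.s. This Gibbs argument is the one I would rely on first. I would present the functional-equation route only as supporting intuition.
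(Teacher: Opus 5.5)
Your threshold step is the paper's argument, written out more carefully. Fubini/Tonelli collapses $\Pi_j$ into the mixtures $M_j^{(n)}$, and pointwise minimization of $\pi_1L_{01}+\E_{M_0^{(n)}}[\phi_n(\pi_0L_{10}-\pi_1L_{01}\Lambda_n)]$ gives the region $\{\Lambda_n>\pi_0L_{10}/(\pi_1L_{01})\}$, which the appendix only asserts as ``a pointwise posterior decision rule.''

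Where you genuinely diverge is canonicality. The paper stops at ``$\Lambda_n$ is the sufficient statistic for the Bayes-optimal decision.'' That identifies the evidence only up to a strictly increasing transformation: $\Lambda_n^2$, or the posterior probability $\pi_1\Lambda_n/(\pi_0+\pi_1\Lambda_n)$, generates exactly the same family of Bayes rules. Log-loss also enters the paper's argument only nominally, since the threshold comes from the binary losses $L_{10},L_{01}$.

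Your Gibbs argument repairs both points. Write an E-variable as $E=dQ/dM_0^{(n)}$ with $Q$ a sub-probability. Then $\E_{M_1}[\log E]=\E_{M_1}[\loss_{M_0}-\loss_Q]$, and by the same Fubini step $\E_{M_1}[\loss_Q]$ is the $\Pi_1$-Bayes log-loss risk of $Q$. So log-optimality \emph{is} the log-loss Bayes-risk criterion. Strict concavity of $\log$ then pins down $E=\Lambda_n$ a.s.\ among all $\F_n$-measurable E-variables for $M_0^{(n)}$, not merely among monotone transforms of $\Lambda_n$.

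State this in the relative form you end with, $\E_{M_1}[\log(E/\Lambda_n)]\le 0$. This is always well defined, because $(\log x)^+\le x$, and its equality case needs no finiteness of $D_{\KL}(M_1^{(n)}\|M_0^{(n)})$. Comparing $\E_{M_1}[\log E]$ with $\E_{M_1}[\log\Lambda_n]$ directly does need that finiteness.

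Your explicit martingale check of $\Lambda_n$ under $M_0$ is correct; the paper's proof does not carry it out and covers it only separately, in Propositions~\ref{prop:lr_eproc}--\ref{prop:bf_evar}. You also rightly restrict validity to the Bayes-marginal null rather than claiming it uniformly over the composite class. The functional-equation sketch is loose, since the ``for every pair of predictive kernels'' requirement does unstated work, so relying on the Gibbs argument is the right call.
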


\begin{proof}
Under log-loss, Bayes risk is an expectation of cumulative log-loss
differences. Applying Fubini/Tonelli swaps the prior and data integrals,
yielding a pointwise posterior decision rule. The resulting optimal
rejection region is $\Lambda_n(X^n) > \tau$ for a threshold $\tau$
determined by prior weights and losses. Full details in Appendix~\ref{app:proofs}.
\end{proof}

\noindent\textit{ML interpretation.}
Theorem~\ref{thm:canonical_lr} identifies the likelihood ratio as the
optimal adversarial strategy under log-loss: among all evidence processes
arising from coherent prediction, the LR process minimizes Bayes risk.
For online model comparison, choosing between a null model $P_0$ and an
alternative $P_1$ based on streaming data, the LR E-process provides the
evidence measure that is simultaneously valid (supermartingale under $H_0$)
and optimal (minimizes expected loss under the Bayesian criterion).
Outside the coherent predictive/log-loss subclass, valid E-processes exist
that need not admit any LR representation.
\citet{PolsonZantedeschi2026Admissibility} extend this canonicality to a broader admissibility geometry for predictive inference.

\subsection{Decision-Theoretic Cutoffs}
\label{sec:decision_cutoffs}

The canonicality theorem separates the evidence process from the
stopping rule. Consider binary action $\delta_t \in \{0,1\}$ at time $t$ with losses $L_{10}$ (false positive) and $L_{01}$ (false negative), and prior weights $\pi_0, \pi_1$.
The Fubini decomposition reduces Bayes risk to a posterior-threshold rule: reject $H_0$ whenever
\[
\frac{\pi_1}{\pi_0} B_t \ge \frac{L_{10}}{L_{01}}, \qquad \text{equivalently} \qquad B_t \ge c^\star := \frac{\pi_0 L_{10}}{\pi_1 L_{01}}.
\]
Markov/Ville inequalities certify error control for a chosen $c^\star$ but do not determine the optimal cutoff. Optimization of $c^\star$ is governed by the moderate-deviation behavior of $\log B_t$, not by Markov's inequality.

\noindent\textit{ML interpretation.}
In adaptive A/B testing, the LR process $B_t$ accumulates evidence for one treatment over another. The decision threshold $c^\star$ encodes the cost asymmetry between false positives (deploying an inferior treatment) and false negatives (failing to detect a superior one). Ville's inequality guarantees validity for \emph{any} threshold; Bayes risk selects the \emph{optimal} one.

\subsection{Likelihood-Ratio and Mixture E-Processes}
\label{sec:lr_eprocess}

\begin{proposition}[Likelihood ratio is an E-process]
\label{prop:lr_eproc}
Let $q, p_0$ be predictive kernels with $q \ll p_0$. Define
$E_t := \prod_{s=1}^t q(X_s | X^{s-1}) / p_0(X_s | X^{s-1})$.
Then $(E_t)$ is a nonnegative $P_0$-martingale with $E_0 = 1$.
\end{proposition}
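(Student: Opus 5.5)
We plan to verify the three defining properties directly: nonnegativity and adaptedness, the initial condition, and the one-step martingale identity. We then obtain the conclusion by the tower property. Throughout, $\F_t := \sigma(X_1,\dots,X_t)$ with $\F_0$ trivial, and $E_0 := 1$ as the empty product.

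\emph{Step 1 (adaptedness, nonnegativity).} Each factor $q(X_s\mid X^{s-1})/p_0(X_s\mid X^{s-1})$ is a measurable function of $X^s$. It is nonnegative, being a ratio of densities (Radon--Nikod\'ym derivatives with respect to a common dominating measure, or directly the derivative $dq(\cdot\mid x^{s-1})/dp_0(\cdot\mid x^{s-1})$). On the $P_0$-null set where $p_0(X_s\mid X^{s-1})=0$ we set the ratio to $0$. So $E_t$ is $\F_t$-measurable and nonnegative.

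\emph{Step 2 (one-step identity).} We can write $E_t = E_{t-1} R_t$ with $R_t := q(X_t\mid X^{t-1})/p_0(X_t\mid X^{t-1})$. Since $E_{t-1}$ is $\F_{t-1}$-measurable and nonnegative, conditional expectation of nonnegative variables gives $\E_{P_0}[E_t\mid\F_{t-1}] = E_{t-1}\,\E_{P_0}[R_t\mid\F_{t-1}]$. This avoids any a priori integrability concern. Under $P_0$, the conditional law of $X_t$ given $X^{t-1}=x^{t-1}$ is $p_0(\cdot\mid x^{t-1})$, so
\[
\E_{P_0}[R_t\mid X^{t-1}=x^{t-1}] = \int_{\{p_0>0\}} \frac{q(x\mid x^{t-1})}{p_0(x\mid x^{t-1})}\, p_0(x\mid x^{t-1})\,\nu(dx) = q\bigl(\{p_0(\cdot\mid x^{t-1})>0\}\mid x^{t-1}\bigr).
\]
The absolute continuity $q\ll p_0$ makes the right-hand side equal to $1$. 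Without it we would get only $\le 1$, which is the supermartingale case.

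\emph{Step 3 (integrability and conclusion).} By induction and the tower property, $\E_{P_0}[E_t] = \E_{P_0}[E_{t-1}] = \dots = E_0 = 1 < \infty$. Hence each $E_t$ is integrable and $\E_{P_0}[E_t\mid \F_{t-1}] = E_{t-1}$ almost surely, which is the martingale property. In particular $(E_t)$ is an E-process in the sense of Definition~\ref{def:eproc}, so Theorem~\ref{thm:ville} applies.

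The only delicate point is Step 2. The kernels must be interpreted as regular conditional densities with respect to a common reference measure $\nu$, and the condition $q\ll p_0$ must be used to show that no $q$-mass sits where $p_0$ vanishes. It is precisely this condition that turns the inequality $\le 1$ into the equality $=1$.
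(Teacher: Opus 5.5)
Your proof is correct and follows essentially the same route as the paper. The paper writes no proof under this proposition, but the argument appears in the sufficiency half of Theorem~\ref{thm:seq_liftability} and in the remark after Assumption~\ref{asm:iid_increments}: write $E_t = E_{t-1}R_t$, pull out the $\F_{t-1}$-measurable factor, and note that the one-step $P_0$-conditional mean of $R_t$ is the mass of $q(\cdot\mid X^{t-1})$. Your version is more careful than the paper's, since you handle the $P_0$-null set where $p_0=0$, get integrability by induction using nonnegative conditional expectations, and show exactly where $q \ll p_0$ upgrades $\le 1$ to $=1$.
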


\begin{proposition}[Bayes factor is an E-variable under $M_0$]
\label{prop:bf_evar}
Let $M_0$ and $M_1$ be prior-predictive (mixture) distributions. Then $\mathrm{BF}_{10}(X^n) = M_1(X^n) / M_0(X^n)$ satisfies $\E_{M_0}[\mathrm{BF}_{10}] = 1$.
\end{proposition}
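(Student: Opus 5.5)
The plan is to compute $\E_{M_0}[\mathrm{BF}_{10}]$ directly as an integral against $M_0$ and let the denominator cancel. I will work with densities with respect to a common dominating $\sigma$-finite measure $\nu$ on the sample space of $X^n$; for instance $\nu = M_0 + M_1$ always works. Write $m_j = dM_j/d\nu$, and read $\mathrm{BF}_{10}(x^n) = m_1(x^n)/m_0(x^n)$ on $\{m_0>0\}$, with an arbitrary value elsewhere, since that set is $M_0$-null. Then
\[
\E_{M_0}[\mathrm{BF}_{10}] = \int_{\{m_0>0\}} \frac{m_1}{m_0}\, m_0 \, d\nu = \int_{\{m_0>0\}} m_1 \, d\nu = M_1(\{m_0>0\}).
\]

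The one step needing care is the last equality to $1$. In general the computation only yields $M_1(\{m_0>0\})\le 1$, so $\mathrm{BF}_{10}$ is always an E-variable. Equality holds exactly when $M_1 \ll M_0$, because then $M_1(\{m_0=0\})=0$. This is the same absolute-continuity condition as $q\ll p_0$ in Proposition~\ref{prop:lr_eproc}, and the mutual absolute continuity assumed in Theorem~\ref{thm:canonical_lr}. I will state it explicitly as the standing assumption and note the inequality version as a remark.

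I also want to confirm that the mixtures are genuine probability measures, so that $M_1$ of the whole space equals $1$. Each $M_j(A) = \int P_F^{\otimes n}(A)\,\Pi_j(dF)$ is a probability measure provided $F\mapsto P_F^{\otimes n}(A)$ is measurable, which is a standing measurability assumption on the model class. Once that holds, Tonelli's theorem gives $m_j(x^n) = \int p_F^{\otimes n}(x^n)\,\Pi_j(dF)$ as a version of the density. This is the same Fubini swap used for Theorem~\ref{thm:canonical_lr}, though strictly it is not needed beyond ensuring that $M_1$ has total mass one.

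As an alternative route, I could apply Proposition~\ref{prop:lr_eproc} with $q = m_1(x_t\mid x^{t-1})$ and $p_0 = m_0(x_t\mid x^{t-1})$, the predictive kernels of the mixtures. The telescoping product then recovers $\mathrm{BF}_{10}$, and the martingale property gives expectation $E_0=1$. I will present the direct computation and mention this route as a cross-check.
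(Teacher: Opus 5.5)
Your cancellation computation $\E_{M_0}[\mathrm{BF}_{10}] = \int_{\{m_0>0\}} m_1\,d\nu = M_1(\{m_0>0\})$ is correct. It is the standard argument the paper takes for granted: the paper states Proposition~\ref{prop:bf_evar} without proof, next to Proposition~\ref{prop:lr_eproc}, which is your cross-check route. You are also right that equality to $1$ requires $M_1 \ll M_0$; the statement omits this hypothesis, but the mutual absolute continuity assumption of Theorem~\ref{thm:canonical_lr} supplies it.

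There is one slip, in a remark you correctly flag as non-load-bearing. With $\nu = M_0 + M_1$, the components $P_F^{\otimes n}$ need not have densities with respect to $\nu$: point masses $\delta_F$ mixed by a diffuse prior give a diffuse mixture that does not dominate them. So the Tonelli formula $m_j = \int p_F^{\otimes n}\,\Pi_j(dF)$ needs a dominating measure for the whole model class, though the main computation never uses it.
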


\noindent\textit{Composite nulls.}
For uniform validity over an arbitrary composite null class $\mathcal{P}$
(i.e., $\E_P[E_t] \le 1$ for every $P \in \mathcal{P}$),
\citet{LarssonRamdasRuf2025} construct the numeraire E-variable via reverse
information projection, which exists without assumptions on $\mathcal{P}$
and admits a likelihood-ratio representation $E = dQ / dP^\star$ against a
representative $P^\star$.
The statement ``Bayes factor is an E-variable'' is correct under the
mixture (prior-predictive) null $M_0$; uniform validity over an
arbitrary composite $\mathcal{P}$ is strictly stronger and requires
the numeraire construction or an equivalent projection argument.

\subsubsection{Composite Nulls Do Not Require a New Layer}

A natural question is whether composite nulls demand additional
machinery beyond the three layers introduced above.
They do not: the typed calculus remains unchanged, but the
\emph{representation-layer object} generalizes from a simple likelihood
ratio $dP_1/dP_0$ to a generalized likelihood ratio against the
composite class.

Concretely, for an arbitrary composite null $\mathcal{P}$ and point
alternative $Q$, \citet{LarssonRamdasRuf2025} prove the existence of a
\emph{numeraire} E-variable $E^\star$ such that every other E-variable $E$
satisfies $\E_Q[E / E^\star] \le 1$, making $E^\star$ log-optimal
under $Q$.
The numeraire induces a representative $P^\star$ via
$dP^\star / dQ = 1/E^\star$, restoring likelihood-ratio structure
even when no single $P_0$ is distinguished.
In the language of our framework, $E^\star$ is a
\emph{representation-layer} object: a generalized likelihood ratio of
$Q$ against $\mathcal{P}$.

The three layers then map as follows.
\emph{Representation}: choose either (i)~a mixture null $M_0$
(Bayesian composite) or (ii)~a uniform composite null $\mathcal{P}$
with numeraire $E^\star$ and its representative $P^\star$.
\emph{Validity}: verify that $(E_t)$ is a $P$-supermartingale for
every $P \in \mathcal{P}$ (or under $M_0$ in the Bayesian case).
\emph{Decision}: select thresholds; in the composite setting,
efficiency depends on the divergence $D_{\KL}(Q \| P^\star)$
(or the least-favorable projection), not on generic Ville
calibration alone.

\section{E-Process Composition and the Evidence Class}
\label{sec:composition}

\begin{definition}[Evidence class]
\label{def:evidence_class}
The \emph{evidence class} under $H_0$ is
$\mathcal{E}(H_0) := \{E \ge 0 : \E_{H_0}[E] \le 1\}$.
The process-level class $\mathcal{E}^{\mathrm{proc}}(H_0)$ consists of all nonnegative supermartingales $(E_t)$ with $\E_{H_0}[E_0] \le 1$.
\end{definition}

\begin{theorem}[Evidence-class algebra]
\label{thm:class_algebra_informal}
$\mathcal{E}^{\mathrm{proc}}(H_0)$ satisfies:
\begin{enumerate}[label=(\alph*)]
\item \emph{Convex mixtures:} $\sum_i w_i E^{(i)}_t \in \mathcal{E}^{\mathrm{proc}}(H_0)$ for $w_i \ge 0$, $\sum w_i = 1$.
\item \emph{Bayesian mixtures:} $\int E^\theta_t \, \pi(d\theta) \in \mathcal{E}^{\mathrm{proc}}(H_0)$.
\item \emph{Stopping:} $E_\tau \in \mathcal{E}(H_0)$ and $\Prob_{H_0}(\sup_t E_t \ge c) \le 1/c$.
\item \emph{Scaling:} $c E_t \in \mathcal{E}^{\mathrm{proc}}(H_0)$ for $c \in (0, 1]$.
\end{enumerate}
The class is \emph{not} a cone: scaling by $c > 1$ violates $\E_{H_0}[E_0] \le 1$.
\end{theorem}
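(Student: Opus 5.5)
Each property is verified directly from Definition~\ref{def:eproc}: nonnegativity, adaptedness, the supermartingale inequality $\E_{H_0}[E_{t+1}\mid\F_t]\le E_t$, and the initial condition $\E_{H_0}[E_0]\le 1$. Nonnegativity and adaptedness are preserved by all four operations; in (b) this needs joint measurability of $(\theta,\omega)\mapsto E^\theta_t(\omega)$. So the work reduces to checking the supermartingale inequality and the initial bound in each case.

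For (a) and (d), linearity and monotonicity of conditional expectation give what we need. For (a),
\[
\E_{H_0}\Bigl[\sum_i w_i E^{(i)}_{t+1}\Bigm|\F_t\Bigr]=\sum_i w_i\,\E_{H_0}[E^{(i)}_{t+1}\mid\F_t]\le \sum_i w_i E^{(i)}_t ,
\]
and $\E_{H_0}[\sum_i w_i E^{(i)}_0]\le\sum_i w_i=1$. For countably many summands, I would replace linearity by conditional monotone convergence, which applies because the terms are nonnegative. For (d), multiplying by a constant $c\in(0,1]$ preserves the inequality, and $\E_{H_0}[cE_0]\le c\le 1$. The closing remark that the class is not a cone needs a counterexample: take $E_t\equiv 1$ and $c>1$, so that $\E_{H_0}[cE_0]=c>1$.

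For (b), the integral $\int E^\theta_t\,\pi(d\theta)$ is a mixture over a possibly uncountable index. Here I would use the conditional Fubini--Tonelli theorem for nonnegative integrands:
\[
\E_{H_0}\Bigl[\int E^\theta_{t+1}\,\pi(d\theta)\Bigm|\F_t\Bigr]=\int \E_{H_0}[E^\theta_{t+1}\mid\F_t]\,\pi(d\theta)\le \int E^\theta_t\,\pi(d\theta).
\]
The initial condition follows from unconditional Tonelli. This step is the main technical obstacle. Interchanging conditional expectation and integration requires choosing a jointly measurable version of $(\theta,\omega)\mapsto\E[E^\theta_{t+1}\mid\F_t](\omega)$, which can be obtained through regular conditional distributions or the kernel form of Fubini. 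I would state this as a mild measurability assumption on the family $\{E^\theta\}$; part (a) is then the special case of a discrete prior.

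For (c), the stopped process $E_{t\wedge\tau}$ is again a nonnegative supermartingale, by optional stopping for bounded times. So $\E_{H_0}[E_{t\wedge\tau}]\le\E_{H_0}[E_0]\le1$. Since $E_t$ is a nonnegative supermartingale, it converges almost surely to $E_\infty$, so $E_\tau$ is well defined even on $\{\tau=\infty\}$. Fatou's lemma applied as $t\to\infty$ then gives $\E_{H_0}[E_\tau]\le1$, that is, $E_\tau\in\mathcal{E}(H_0)$. The maximal bound $\Prob_{H_0}(\sup_t E_t\ge c)\le 1/c$ is exactly Theorem~\ref{thm:ville}, which I will cite. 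Alternatively it can be re-derived by taking $\tau=\inf\{t:E_t\ge c\}$ and applying Theorem~\ref{thm:markov_e} to $E_\tau$, which gives $c\,\Prob(\tau<\infty)\le\E[E_\tau]\le1$.
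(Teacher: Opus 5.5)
Your proposal is correct and follows essentially the same route as the paper. It uses linearity of conditional expectation for (a), a Fubini interchange of the prior integral with conditional expectation for (b), optional stopping together with the Doob/Ville maximal inequality for (c), and treats (d) as immediate. You are in fact more careful than the paper's sketch, which leaves implicit the joint-measurability hypothesis in (b), the handling of $\{\tau=\infty\}$ via supermartingale convergence and Fatou's lemma, and an explicit counterexample (such as $E_t\equiv 1$) for the non-cone claim.
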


\begin{proof}
(a) Linearity of conditional expectation gives $\E_{H_0}[\sum w_i E^{(i)}_t | \F_{t-1}] = \sum w_i \E_{H_0}[E^{(i)}_t | \F_{t-1}] \le \sum w_i E^{(i)}_{t-1}$.
(b) Fubini's theorem swaps the prior integral and conditional expectation: $\E_{H_0}[\int E^\theta_t \pi(d\theta) | \F_{t-1}] = \int \E_{H_0}[E^\theta_t | \F_{t-1}] \pi(d\theta) \le \int E^\theta_{t-1} \pi(d\theta)$.
(c) The optional stopping theorem for nonneg supermartingales gives $\E_{H_0}[E_\tau] \le \E_{H_0}[E_0] \le 1$; the Ville bound is Doob's maximal inequality applied to the nonneg supermartingale $(E_t)$.
Full proofs in Appendix~\ref{app:algebra}.
\end{proof}

\noindent\textit{ML interpretation.}
In ensemble methods, property~(a) means that averaging evidence from multiple
models preserves validity. In Bayesian model averaging, property~(b) means
that integrating over a prior on model parameters yields a valid E-process.
In sequential monitoring, property~(c) means that stopping at any data-dependent
time preserves the error guarantee.

\begin{proposition}[Maximality of the evidence class]
\label{prop:maximality}
$\mathcal{E}^{\mathrm{proc}}(H_0)$ is the largest convex class of nonneg adapted processes that is closed under predictable stopping and scaling by $c \in (0,1]$ while preserving the Ville guarantee $\Prob_{H_0}(\sup_t E_t \ge b) \le 1/b$.
\end{proposition}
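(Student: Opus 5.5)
We split the argument into two directions: membership (the evidence class has the listed closure properties and Ville control) and maximality (any class with those closure properties and Ville control consists of E-processes).

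\textbf{Membership.} This is already supplied by Theorem~\ref{thm:class_algebra_informal}. Convexity is part~(a), scaling by $c\in(0,1]$ is part~(d), stopping is part~(c), and Ville control is Theorem~\ref{thm:ville}. For predictable stopping we additionally note that a stopped supermartingale $E_{t\wedge\tau}$ is again a supermartingale. So the real content is maximality.

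\textbf{Maximality.} Fix a class $\mathcal{C}$ of nonnegative adapted processes that is convex, closed under predictable stopping and under scaling by $c\in(0,1]$, and satisfies Ville's bound. We want to show that every $(E_t)\in\mathcal{C}$ is a supermartingale with $\E_{H_0}[E_0]\le 1$; the plan is to argue by contradiction.

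The first target is the initial condition. Ville at $t=0$ gives $\Prob(E_0\ge b)\le 1/b$, which yields only a weak-$L^1$ bound, not $\E[E_0]\le1$. We strengthen it using the closure of $\mathcal{C}$ under stopping at deterministic or $\F_0$-measurable times, combined with convexity.

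\textbf{Supermartingale property.} Suppose the property fails at some time $s$. Then there is an event $A\in\F_{s-1}$ of positive probability on which $\E[E_s\mid\F_{s-1}]>E_{s-1}$. From $E$ we build a modified process $\tilde E$ that is frozen off $A$. This is legitimate because $A$ is $\F_{s-1}$-measurable, so the stopping time equal to $s-1$ on $A^c$ and $\infty$ on $A$ is predictable. We then iterate the favourable step by pasting shifted copies, using convex combinations. The aim is a process in $\mathcal{C}$ whose running maximum exceeds some level $b$ with probability greater than $1/b$, contradicting Ville.

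\textbf{Main obstacle.} The iteration step is where the argument is hardest. The closure operations listed (convexity, stopping, scaling down) do not obviously allow concatenating a process with itself, and a single-step gain $\E[E_s\mid\F_{s-1}]>E_{s-1}$ need not by itself violate Ville's $1/b$ bound. For example, $E_0=1$ and $E_1=2$ with probability $1/2$ (and $0$ otherwise) is a martingale; a mild violation such as $\E[E_1]=1.01$ with $E_1$ bounded need not breach the bound at all.

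I therefore expect that the literal statement requires interpreting ``largest'' through a tightness argument. The plan is to show that Ville's inequality, holding for all stopped versions and all convex combinations with the trivial process $E\equiv1$, forces $\E[E_\tau]\le 1$ for every bounded stopping time $\tau$. The intended route is to take a mixture $\lambda E_\tau+(1-\lambda)$, use the layer-cake formula $\E[X]=\int_0^\infty\Prob(X\ge b)\,db$, and optimize over $\lambda$.

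Given $\E[E_\tau]\le1$ for all bounded stopping times $\tau$, the standard characterization yields the supermartingale property. One applies it to $\tau=s-1$ on $A$ and $\tau=s$ on $A^c$, for $A\in\F_{s-1}$, which gives $\E[E_s\mathbf 1_A]\le\E[E_{s-1}\mathbf 1_A]$.

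The step from Ville to expectation control is the point I would scrutinize first. If it fails, I would refine the claim to maximality with respect to the \emph{optional-stopping} guarantee $\E[E_\tau]\le1$, from which the conclusion follows directly as described above.
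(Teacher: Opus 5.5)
\textbf{The statement is false, so the maximality direction cannot be proved.} You are right that Ville's bound alone is too weak, but the consequence is stronger than you drew. With $\mathcal{E}^{\mathrm{proc}}(H_0)$ defined as the nonnegative supermartingales with $\E_{H_0}[E_0]\le1$, neither your main route nor your fallback can be completed. Let $\mathcal{D}$ be the class of nonnegative adapted $Z$ with $Z_t\le N_t$ for all $t$, for some $N\in\mathcal{E}^{\mathrm{proc}}(H_0)$. This class is convex (dominate $\sum_i w_iZ^{(i)}$ by $\sum_i w_iN^{(i)}$). It is closed under stopping ($Z_{t\wedge\tau}\le N_{t\wedge\tau}$) and under scaling by $c\in(0,1]$. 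It satisfies Ville because $\sup_tZ_t\le\sup_tN_t$. Yet it contains the deterministic process $Z_0=1/2$, $Z_t=1$ for $t\ge1$ (dominated by $N\equiv1$), which is not a supermartingale. So $\mathcal{E}^{\mathrm{proc}}(H_0)$ is neither the largest such class nor even a maximal one. The same $Z$ defeats your fallback: $\E[Z_\tau]\le1$ for every stopping time, yet $Z$ increases. The characterization you invoke needs the comparison $\E[E_\tau]\le\E[E_\sigma]$ for bounded stopping times $\sigma\le\tau$ (take $\sigma=s-1$, and $\tau=s$ on $A$, $\tau=s-1$ on $A^c$). A uniform bound by $1$ gives only domination by a supermartingale (the Snell envelope), i.e.\ membership in $\mathcal{D}$.

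\textbf{Your tightness lemma also fails.} Ville is only a weak-$L^1$ bound and $\int_1^\infty b^{-1}\,db=\infty$, so no layer-cake optimization over $\lambda$ can yield $\E[E_\tau]\le1$. Concretely, take $U$ uniform on $(0,1)$, $\F_0$ trivial, $\F_t=\sigma(U)$ for $t\ge1$, and $E_0=1$, $E_t=\min(1/U,K)$ for $t\ge1$ with $K>1$. For all $\lambda\in(0,1]$ and $b>1$ one gets $\Prob(\sup_t(\lambda E_t+1-\lambda)\ge b)\le\lambda/(b-1+\lambda)\le1/b$, and stopping changes nothing. Yet $\E[E_1]=1+\log K>1$. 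The class $\{a+cE: a,c\ge0,\ a+c\le1\}$ has every listed property, so it is itself a counterexample. Putting the same variable at time $0$ shows that even $\E[E_0]\le1$ cannot be extracted. In fact no largest such class exists. With $K=\infty$, the processes $E_t=1/U$ and $E_t=1/(1-U)$ ($t\ge1$, $E_0=1$) generate admissible classes of the same form, but their average equals $1/(2U(1-U))\ge2$ surely, violating Ville at $b=2$.

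\textbf{The paper's own proof has the same gap.} Its cited ``Ville converse'' would at best give a supermartingale majorant, placing $E$ in something like $\mathcal{D}$ rather than in $\mathcal{E}^{\mathrm{proc}}(H_0)$. Its scaling step needs the factor $c^{-1}>1$, which closure under $c\in(0,1]$ does not supply. The converse also fails as stated: the $1/U$ process meets Ville with equality while $\E[E_1]=\infty$, so no test supermartingale dominates it. A salvageable statement replaces Ville by the optional-stopping guarantee $\sup_\tau\E_{H_0}[E_\tau]\le1$. For a simple null, the Snell envelope shows its largest class is exactly $\mathcal{D}$, which is strictly larger than $\mathcal{E}^{\mathrm{proc}}(H_0)$.
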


\begin{proof}
Let $\mathcal{C}$ be any convex class of nonneg adapted processes closed under predictable stopping and satisfying $\Prob_{H_0}(\sup_t E_t \ge b) \le 1/b$ for all $E \in \mathcal{C}$ and all $b > 0$. By Ville's converse \citep[Thm.~4.3]{RamdasEbook}, any nonneg process satisfying the uniform Ville bound is dominated by a supermartingale, so every $E \in \mathcal{C}$ admits a supermartingale majorant. Closure under scaling by $c \in (0,1]$ forces $\E_{H_0}[E_0] \le 1$ (otherwise $c^{-1} E$ with $c = \E_{H_0}[E_0]^{-1}$ would violate the Ville bound). Hence $\mathcal{C} \subseteq \mathcal{E}^{\mathrm{proc}}(H_0)$.
\end{proof}

This extremal characterization shows that the evidence class is not an arbitrary definition but the unique maximal structure compatible with anytime-valid error control.

\begin{example}[Non-closure under pointwise maximum]
\label{ex:non_closure_max}
Let $E^{(1)}_t$ and $E^{(2)}_t$ be two E-processes under $H_0$ with $E^{(1)}_0 = E^{(2)}_0 = 1$.
Define $M_t := \max(E^{(1)}_t, E^{(2)}_t)$.
Then $M_t$ is nonnegative and adapted, but is not in general an E-process.

\emph{Concrete instance.}
Let $X_t \sim \mathrm{Bern}(1/2)$ under $H_0$.
Take $E^{(1)}_t = \prod_{s=1}^t 2X_s$ (bets on heads) and $E^{(2)}_t = \prod_{s=1}^t 2(1-X_s)$ (bets on tails).
Both are $P_0$-martingales. At $t = 1$: $M_1 = \max(2X_1, 2(1-X_1)) = 2$ with probability 1.
So $\E_{H_0}[M_1] = 2 > 1 = M_0$, violating the supermartingale property.
More generally, $\E_{H_0}[\sup_{t \ge 0} M_t] = \infty$, so the Ville guarantee fails entirely.
\end{example}

This failure is a typed mismatch: the pointwise maximum attempts to extract the ``best of both worlds'' from two validity-layer objects, but the resulting process exits the evidence class. In contrast, the convex combination $\frac{1}{2}E^{(1)}_t + \frac{1}{2}E^{(2)}_t$ is a valid E-process by Theorem~\ref{thm:class_algebra_informal}(a).

\subsection{Sequential Composition (Stitching)}
\label{sec:stitching}

\begin{definition}[Stitched evidence process]
\label{def:stitched}
Given E-processes $(E^{(1)}_t)$, $(E^{(2)}_t)$ and a stopping time $\tau$,
define $\tilde{E}_t = E^{(1)}_t$ for $t \le \tau$ and
$\tilde{E}_t = E^{(1)}_\tau \cdot E^{(2)}_{t - \tau}$ for $t > \tau$.
\end{definition}

\begin{proposition}[Stitching validity]
\label{prop:stitching}
The stitched process $(\tilde{E}_t)$ is an E-process under $H_0$.
\end{proposition}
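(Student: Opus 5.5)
The plan is to verify the one-step supermartingale inequality $\E_{H_0}[\tilde E_t \mid \F_{t-1}] \le \tilde E_{t-1}$ directly, by splitting $\Omega$ according to where $\tau$ lies relative to $t$. The initial condition is immediate: $\tilde E_0 = E^{(1)}_0$, so $\E_{H_0}[\tilde E_0] \le 1$.

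First I will fix the precise meaning of Definition~\ref{def:stitched}, which I expect to be the only delicate point. For the statement to hold, $E^{(2)}$ must be a process \emph{restarted at $\tau$}. Concretely, on $\{\tau = k\}$ the quantity $E^{(2)}_{s}$ is $\F_{k+s}$-measurable. It must be a nonnegative supermartingale in $s$ with respect to the shifted filtration $(\F_{k+s})_{s\ge 0}$, conditionally on $\F_k$, and its starting value must satisfy $E^{(2)}_0 \le 1$. For instance, $E^{(2)}$ may be a likelihood-ratio or betting process built from $X_{\tau+1}, X_{\tau+2},\dots$, possibly with $\F_\tau$-measurable tuning. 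Without this reading the product $E^{(1)}_\tau E^{(2)}_{t-\tau}$ need not even be adapted, so I will state it explicitly as the interpretation of the definition.

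With that in place, fix $t \ge 1$ and decompose along $\{\tau \ge t\}$ and $\{\tau = k\}$ for $k \le t-1$. All of these events lie in $\F_{t-1}$, because $\tau$ is a stopping time. This lets me compute conditional expectations separately on each piece, multiplying by indicators and using nonnegativity so that generalized conditional expectations are well defined even without integrability.

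\begin{itemize}
\item On $\{\tau \ge t\}$ we have $\tilde E_t = E^{(1)}_t$ and $\tilde E_{t-1} = E^{(1)}_{t-1}$, so the inequality is the supermartingale property of $E^{(1)}$.
\item On $\{\tau = k\}$ with $k \le t-1$, the factor $E^{(1)}_k$ is $\F_{t-1}$-measurable and nonnegative, so it pulls out: $\E[\tilde E_t \mid \F_{t-1}] = E^{(1)}_k \,\E[E^{(2)}_{t-k}\mid \F_{t-1}]$.
\begin{itemize}
\item If $k < t-1$, the shifted supermartingale property gives the bound $E^{(1)}_k E^{(2)}_{t-1-k} = \tilde E_{t-1}$.
\item If $k = t-1$, we get $\E[E^{(2)}_1\mid\F_{k}] \le E^{(2)}_0 \le 1$, so the bound is $E^{(1)}_k = \tilde E_{t-1}$. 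This is exactly where $E^{(2)}_0 \le 1$ is used.
\end{itemize}
\end{itemize}

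Summing the pieces gives the supermartingale inequality on all of $\Omega$. Nonnegativity and adaptedness are clear from the construction, so $(\tilde E_t)$ is an E-process in the sense of Definition~\ref{def:eproc}. Ville's inequality (Theorem~\ref{thm:ville}) then applies to it.
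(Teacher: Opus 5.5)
Your proof is correct, and it supplies an argument the paper never gives. Proposition~\ref{prop:stitching} is stated without proof. The appendix result titled \emph{Countable stitching} (Proposition~\ref{prop:app_stitching}) is actually a countable-mixture statement, not sequential concatenation. The nearest argument in the paper is the split $\1\{t\le\tau\}$ versus $\1\{t>\tau\}$ for stopped processes in Proposition~\ref{prop:app_stopping}. Your proof refines that split over the events $\{\tau=k\}$, $k\le t-1$, pulls out the $\F_{t-1}$-measurable factor $E^{(1)}_k$, and applies the restarted supermartingale property. The boundary case $k=t-1$ is exactly where $E^{(2)}_0\le 1$ is used.

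One correction to your reason for adopting the restart reading. Under the literal reading of Definition~\ref{def:stitched}, take $E^{(2)}$ adapted to $(\F_t)$ and evaluated at time $t-\tau$. Then the product \emph{is} adapted, since on $\{\tau=k\}$ it is $\F_{t-k}$-measurable and $\F_{t-k}\subseteq\F_t$. What fails is the supermartingale property: for $k\ge 1$ the factor $E^{(2)}_{t-k}$ is already $\F_{t-1}$-measurable, so early observations are reused. For instance, let $X_t$ be i.i.d.\ $\mathrm{Bern}(1/2)$ under $H_0$, $E^{(1)}_t=E^{(2)}_t=\prod_{s\le t}2X_s$ and $\tau\equiv 1$. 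The literal construction gives $\tilde E_2=4X_1$, so $\E_{H_0}[\tilde E_2]=2$.

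So your interpretation is what makes the proposition true, not merely well defined. Similarly, your pointwise hypothesis $E^{(2)}_0\le 1$ is strictly stronger than the condition $\E_{H_0}[E_0]\le 1$ in Definition~\ref{def:eproc}, and it cannot be weakened to that condition. The reason is that the step $k=t-1$ needs $\E_{H_0}[E^{(2)}_1\mid\F_{k}]\le 1$.
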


\noindent\textit{ML interpretation.}
Stitching enables sequential composition of evidence across phases of an
online experiment. For instance, an adaptive A/B test may switch from
an initial exploration phase to a confirmation phase at a data-dependent
time $\tau$; stitching guarantees that the combined evidence remains valid.

\subsection{Non-Compositions}

The following operations do \emph{not} preserve E-process validity:
pointwise maxima $\max(E^{(1)}_t, E^{(2)}_t)$, pointwise minima,
hard thresholding $E_t \cdot \1\{E_t > c\}$, and naive averaging of
p-values converted to E-values. These failures are typed mismatches, not
pathologies: each violated operation attempts to combine objects from
different layers of the typed framework.

\section{Boundary Efficiency: A Moderate-Deviation Theorem}
\label{sec:boundary}

We formalize the efficiency gap between validity-only and
representation-aware boundary selection through a stopping-time
moderate-deviation theorem under explicit Cram\'er conditions.

\subsection{KL Growth Rate}

\begin{proposition}[Evidence growth rate]
\label{prop:e_growth}
Let $E_n = \prod_{t=1}^n P_1(X_t) / P_0(X_t)$ be the LR E-process.
\begin{enumerate}[label=(\alph*)]
\item Under correct specification ($X_i \stackrel{\mathrm{iid}}{\sim} P_1$):
$\frac{1}{n} \log E_n \xrightarrow{\mathrm{a.s.}} D_{\KL}(P_1 \| P_0)$.
\item Under misspecification ($X_i \stackrel{\mathrm{iid}}{\sim} P_{\mathrm{true}} \ne P_1$):
$\frac{1}{n} \log E_n \xrightarrow{\mathrm{a.s.}} D_{\KL}(P_{\mathrm{true}} \| P_0) - D_{\KL}(P_{\mathrm{true}} \| P_1)$.
\end{enumerate}
\end{proposition}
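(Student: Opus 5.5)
The plan is to reduce both parts to Kolmogorov's strong law of large numbers applied to the i.i.d.\ log-likelihood increments. Write
\[
\log E_n = \sum_{t=1}^n Y_t, \qquad Y_t := \log \frac{P_1(X_t)}{P_0(X_t)},
\]
where $P_j(\cdot)$ denotes densities with respect to a common dominating measure $\nu$ (for instance $\nu = P_0 + P_1 + P_{\mathrm{true}}$). Under either data-generating law the $Y_t$ are i.i.d., so the SLLN gives $n^{-1}\log E_n \to \E[Y_1]$ almost surely whenever $\E[Y_1]$ exists in $[-\infty,\infty]$. It will suffice that one of $\E[Y_1^+]$ and $\E[Y_1^-]$ is finite, since the SLLN extends to extended-valued means by truncation. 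The work is therefore to identify $\E[Y_1]$ and to check that it is well defined.

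For part (a), I take $X_t \sim P_1$ and compute
\[
\E_{P_1}[Y_1] = \int p_1 \log(p_1/p_0)\,d\nu = D_{\KL}(P_1\|P_0).
\]
Well-definedness follows from the standard bound: the negative part satisfies $\E_{P_1}[Y_1^-] = \E_{P_1}[\log^+(p_0/p_1)] \le \E_{P_1}[p_0/p_1] \le 1$. Hence the mean exists and lies in $[0,\infty]$. If the divergence is infinite, the a.s.\ limit is $+\infty$, which is consistent with the statement read in the extended sense. I will also note that $P_1 \ll P_0$ is needed for $Y_t$ to be finite $P_1$-a.s.; otherwise $E_n=\infty$ eventually, and the limit is again $+\infty = D_{\KL}$.

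For part (b), with $X_t \sim P_{\mathrm{true}}$ I split
\[
Y_1 = \log\frac{p_{\mathrm{true}}}{p_0}(X_1) - \log\frac{p_{\mathrm{true}}}{p_1}(X_1).
\]
Taking expectations gives the stated difference $D_{\KL}(P_{\mathrm{true}}\|P_0) - D_{\KL}(P_{\mathrm{true}}\|P_1)$, provided the two terms are not both infinite. The main obstacle is exactly this integrability step: linearity of expectation requires the difference to avoid the form $\infty-\infty$.

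I will handle it by stating the natural assumption that at least one of the two divergences is finite. The stronger condition $D_{\KL}(P_{\mathrm{true}}\|P_1)<\infty$ is what the statement implicitly needs anyway. Given that assumption, each of the two log-ratio terms has a well-defined expectation, because each term's negative part has expectation at most $1$ by the same $\log^+ x \le x$ bound as in part (a). Their difference is then integrable in the extended sense, and the SLLN applies directly to the sum.

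Under $D_{\KL}(P_{\mathrm{true}}\|P_1)<\infty$ the limit lies in $(-\infty,\infty]$. It is positive exactly when $P_1$ is KL-closer to $P_{\mathrm{true}}$ than $P_0$ is, which is the interpretation I will record after the proof.
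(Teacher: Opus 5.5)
Your proposal is correct and takes the same route as the paper: apply the strong law of large numbers to the i.i.d.\ increments $\log(p_1/p_0)(X_t)$, then identify their mean as $D_{\KL}(P_1\|P_0)$ in (a) or as the difference of divergences in (b). Your treatment of integrability is more careful than the paper's, which assumes $P_1\ll P_0$ and $D_{\KL}(P_1\|P_0)<\infty$ for (a) and leaves integrability in (b) implicit; you instead bound $\E[Y_1^-]\le 1$ via $\log^+ x\le x$, use the extended-valued SLLN, and require one divergence in (b) to be finite so that $\infty-\infty$ cannot arise.
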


\begin{proof}
Both claims follow from the strong law of large numbers applied to the
i.i.d.\ summands $\log(P_1(X_t)/P_0(X_t))$.
Full statement and proof in Appendix~\ref{app:proofs}.
\end{proof}

\noindent\textit{ML interpretation.}
Under correct specification, evidence against $H_0$ accumulates at rate
$D_{\KL}(P_1 \| P_0)$ per observation, which is the information-theoretic sample
complexity of the testing problem.
Under misspecification, evidence may drift downward: if the deployed model
$P_1$ is farther from truth than the null $P_0$, the E-process favors the
null despite both being wrong. This has direct implications for online model
monitoring under distribution shift.

\begin{remark}[Composite extension]
\label{rem:composite_rate}
In simple-vs-simple testing the detection rate is
$D_{\KL}(P_1 \| P_0)$.
In composite-vs-point testing with the numeraire $E^\star$ of
\citet{LarssonRamdasRuf2025}, the rate becomes
$D_{\KL}(Q \| P^\star)$, where $P^\star$ is the representative
induced by $E^\star$.
Once the representation-layer object is fixed, the same
decision-layer MDP logic (Theorem~\ref{thm:moderate_deviation})
applies with $\mu$ replaced by the projection divergence.
\end{remark}

\subsection{Assumptions and Moderate-Deviation Stopping Boundary}
\label{sec:moderate_deviation}

We formalize the stopping-time behavior of the LR process under explicit regularity conditions.

\begin{assumption}[i.i.d.\ log-likelihood increments]
\label{asm:iid_increments}
Let $X_1, X_2, \ldots \sim P_1$ i.i.d.\ and define the log-likelihood increments
\[
Y_t := \log \frac{p_1(X_t)}{p_0(X_t)}.
\]
Assume:
\begin{enumerate}[label=(\alph*)]
\item $\mu := \E_{P_1}[Y_t] = D_{\KL}(P_1 \| P_0) > 0$;
\item $\sigma^2 := \Var_{P_1}(Y_t) < \infty$;
\item \emph{(Cram\'er condition)} $\Lambda(\lambda) := \log \E_{P_1}[\exp(\lambda Y_t)] < \infty$ for all $\lambda$ in a neighborhood of $0$.
\end{enumerate}
Conditions (a)--(c) hold for all exponential-family models with compact natural parameter spaces. They imply Cram\'er-type moderate-deviation bounds for the partial sums $S_t = \sum_{i=1}^t Y_i$ \citep[Theorem~3.7.1]{DemboZeitouni1998}: for any $x > 0$,
\begin{equation}\label{eq:cramer_tail}
P_1\!\left(\frac{S_t - \mu t}{\sigma\sqrt{t}} \ge x\right) \;\le\; \exp\!\left(-\frac{x^2}{2}\!\left(1 + O\!\left(\frac{x}{\sqrt{t}}\right)\right)\right).
\end{equation}
In particular, for fixed $x$ and large $t$, the tail is sub-Gaussian at rate $\exp(-x^2/2)$.
\end{assumption}

Under these conditions, the LR process $E_t = \exp(S_t)$ is a $P_0$-martingale (since $\E_{P_0}[\exp(Y_t) | \F_{t-1}] = \E_{P_0}[p_1(X_t)/p_0(X_t)] = 1$), and its stopping-time behavior admits the following sharp characterization.

\begin{theorem}[Moderate-deviation stopping boundary]
\label{thm:moderate_deviation}
Let $S_t = \sum_{i=1}^t Y_i$ under Assumption~\ref{asm:iid_increments}, and define the stopping time
\[
\tau_b := \inf\{t \ge 1 : S_t \ge \log b\}.
\]
Then:
\begin{enumerate}[label=(\roman*)]
\item \emph{Anytime validity under $P_0$.}
$\Prob_{P_0}(\tau_b < \infty) \le 1/b$.
\item \emph{Expected detection time under $P_1$.}
$\E_{P_1}[\tau_b] = \dfrac{\log b}{\mu} + O(\sqrt{\log b})$.
\item \emph{Moderate-deviation concentration under $P_1$.}
$\dfrac{\tau_b - (\log b)/\mu}{\sqrt{\log b}} = O_{P_1}(1)$.
\end{enumerate}
\end{theorem}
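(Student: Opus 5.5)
Part (i) follows directly from earlier results. By the remark after Assumption~\ref{asm:iid_increments} (equivalently Proposition~\ref{prop:lr_eproc}), $E_t=\exp(S_t)$ is a nonnegative $P_0$-martingale with $E_0=1$. The event $\{\tau_b<\infty\}$ coincides with $\{\sup_{t\ge1}E_t\ge b\}$, so Ville's inequality (Theorem~\ref{thm:ville}) gives $\Prob_{P_0}(\tau_b<\infty)\le 1/b$ at once.

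For (ii) I will use Wald's identity together with an overshoot bound. Write $a:=\log b$.
\begin{itemize}
\item \emph{Integrability.} Since $\mu>0$, $\tau_b$ is finite almost surely and $\E_{P_1}[\tau_b]<\infty$. I can get this from the standard renewal argument, or from the Cram\'er tail bound \eqref{eq:cramer_tail} applied to $P_1(\tau_b>t)\le P_1(S_t<a)$. For $t\ge 2a/\mu$ the latter is a left-tail deviation of order $\mu t/2$, so it decays exponentially in $t$; the left-tail analogue of \eqref{eq:cramer_tail} follows from condition (c) applied to $-Y$.
\item \emph{Wald's identity.} This gives $\mu\,\E_{P_1}[\tau_b]=\E_{P_1}[S_{\tau_b}]=a+\E_{P_1}[R_a]$, where $R_a:=S_{\tau_b}-a\ge0$ is the overshoot.
\item \emph{Reduction.} It then suffices to show $\E_{P_1}[R_a]=O(\sqrt a)$. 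In fact it is bounded uniformly in $a$.
\end{itemize}

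Bounding the overshoot is the main obstacle, because increments may be unbounded. I will use Lorden's inequality, $\sup_a\E[R_a]\le \E[(Y^+)^2]/\mu$, which is finite by (b). If I want to stay self-contained, I would instead argue cruder. On $\{\tau_b=t\}$ we have $R_a\le Y_t^+$, and $\tau_b=t$ depends on $Y_t$ only through $\{S_{t-1}<a,\ S_{t-1}+Y_t\ge a\}$. Hence $\E[R_a]\le \E[\max_{t\le T}Y_t^+]+\E[\max_{t\le\tau_b}Y_t^+;\tau_b>T]$. With $T\asymp a/\mu$, the Cram\'er condition (c) gives $\E[\max_{t\le T}Y^+_t]=O(\log T)$, and the tail term is exponentially small by the integrability step. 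This yields $O(\log a)\subset O(\sqrt a)$, which is enough. Combining with Wald, $\E_{P_1}[\tau_b]=a/\mu+O(\sqrt a)$; with Lorden's bound the error is even $O(1)$.

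For (iii), set $n_\pm:=a/\mu\pm K\sqrt a$. Then
\[
P_1(\tau_b>n_+)\le P_1(S_{n_+}<a),
\]
and the event on the right requires $S_{n_+}-\mu n_+\le -\mu K\sqrt a$. This is a standardized deviation of size $x\approx \mu^{3/2}K/\sigma$, so by \eqref{eq:cramer_tail} for $-Y$ (or simply by Chebyshev with $\sigma^2$) it has probability at most $\exp(-cK^2)$ (respectively $C/K^2$). In the other direction,
\[
P_1(\tau_b\le n_-)=P_1\Bigl(\max_{t\le n_-}S_t\ge a\Bigr),
\]
which requires $\max_{t\le n_-}(S_t-\mu t)\ge \mu K\sqrt a$. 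By Kolmogorov's maximal inequality this has probability at most $\sigma^2 n_-/(\mu^2K^2a)=O(1/K^2)$. Both tails vanish as $K\to\infty$ uniformly in $a$, which is exactly $(\tau_b-a/\mu)/\sqrt a=O_{P_1}(1)$.
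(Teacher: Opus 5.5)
Your proof is correct. Parts (i) and (ii) match the paper: Ville's inequality for $\exp(S_t)$, then Wald's identity plus Lorden's overshoot bound. You rightly note that Lorden already yields an $O(1)$ error, so the paper's further appeal to a Siegmund-type first-passage expansion for an ``$O(\sqrt{\log b})$ refinement'' is superfluous.

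Part (iii) is where you take a genuinely different route. The paper applies Chebyshev to $\tau_b$ itself, using Wald's second identity to assert $\Var_{P_1}(\tau_b)=(\sigma^2/\mu^2)\E_{P_1}[\tau_b]$. You instead push each tail of $\tau_b$ back onto the random walk:
\begin{itemize}
\item the upper tail via $\{\tau_b>n_+\}\subseteq\{S_{n_+}<a\}$ and Chebyshev;
\item the lower tail via $\{\tau_b\le n_-\}\subseteq\{\max_{t\le n_-}(S_t-\mu t)\ge\mu K\sqrt a\}$ and Kolmogorov's maximal inequality.
\end{itemize}
Your version uses only $\sigma^2<\infty$ and needs no information about the overshoot, and every inequality is exact.

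Done carefully, the paper's variance route would give more, namely $\Var_{P_1}(\tau_b)\sim(\sigma^2/\mu^3)\log b$, visible in the simulated standard deviations. As written, however, it skips a step. Wald's second identity actually states $\E[(S_{\tau_b}-\mu\tau_b)^2]=\sigma^2\E[\tau_b]$, with $S_{\tau_b}=a+R_a$. Passing from this to $\Var(\tau_b)$ requires a uniform bound on $\E[R_a^2]$, which is available under the Cram\'er condition. Via Cauchy--Schwarz it then gives an $O(\sqrt{\log b})$ remainder rather than the stated $O(1)$. That is still enough for tightness, but your direct argument avoids the issue altogether.

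Two small precision points:
\begin{itemize}
\item Your upper-tail Chebyshev bound is $\sigma^2n_+/(\mu^2K^2a)=\sigma^2/(\mu^3K^2)+O\bigl(1/(K\sqrt a)\bigr)$ rather than $C/K^2$. It still vanishes uniformly in $a\ge1$ as $K\to\infty$.
\item In your integrability step the deviation is of order $t$, so the right tool for exponential decay is the Chernoff bound $\exp(-t\,\Lambda^*(\cdot))$, with $\Lambda^*$ the Legendre transform of $\Lambda$. Condition (c) supplies it; the moderate-deviation form \eqref{eq:cramer_tail} is not the right fit there.
\end{itemize}
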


\begin{proof}
\emph{(i) Anytime validity.}
The process $E_t = \exp(S_t)$ is a nonneg $P_0$-martingale with $E_0 = 1$. Ville's inequality (Theorem~\ref{thm:ville}) gives $P_0(\sup_t E_t \ge b) \le 1/b$. Since $\{\tau_b < \infty\} = \{\sup_t E_t \ge b\}$, the claim follows.

\emph{(ii) Expected detection time.}
Under $P_1$, the increments $Y_t$ are i.i.d.\ with mean $\mu > 0$, so the strong law gives $S_t/t \to \mu$ a.s., guaranteeing $\tau_b < \infty$ a.s. Wald's identity gives $\E_{P_1}[S_{\tau_b}] = \mu \cdot \E_{P_1}[\tau_b]$, so $\E_{P_1}[\tau_b] = \E_{P_1}[S_{\tau_b}]/\mu$. Writing $S_{\tau_b} = \log b + R_b$ where $R_b := S_{\tau_b} - \log b \ge 0$ is the overshoot, Lorden's inequality \citep{Lorden1970} bounds $\E_{P_1}[R_b] \le \E_{P_1}[Y_1^2]/\mu$, yielding
\[
\E_{P_1}[\tau_b] = \frac{\log b + \E_{P_1}[R_b]}{\mu} = \frac{\log b}{\mu} + O(1).
\]
The $O(\sqrt{\log b})$ refinement uses the Cram\'er--Petrov moderate-deviation expansion for the first-passage distribution of random walks with finite exponential moments \citep[Ch.~8]{Siegmund1985}: the centered variable $(\tau_b - (\log b)/\mu)/\sqrt{(\sigma^2/\mu^3)\log b}$ converges in distribution to a standard Gaussian, and the first two moments match at rate $O(1/\sqrt{\log b})$.

\emph{(iii) Moderate-deviation concentration.}
Define $c_b := (\log b)/\mu$ and $Z_b := \tau_b - c_b$. Wald's second identity gives $\E_{P_1}[\tau_b^2] - (\E_{P_1}[\tau_b])^2 = (\sigma^2/\mu^2)\E_{P_1}[\tau_b]$, so
\[
\Var_{P_1}(\tau_b) = \frac{\sigma^2}{\mu^2}\E_{P_1}[\tau_b] = \frac{\sigma^2}{\mu^3}\log b + O(1),
\]
and hence $\Var_{P_1}(Z_b) = (\sigma^2/\mu^3)\log b + O(1)$. Chebyshev's inequality gives, for any $K > 0$,
\[
P_1\!\left(\frac{|Z_b|}{\sqrt{\log b}} > K\right) \le \frac{\Var_{P_1}(Z_b)}{K^2 \log b} = \frac{\sigma^2/\mu^3 + O(1/\log b)}{K^2},
\]
which is bounded as $b \to \infty$, establishing $Z_b/\sqrt{\log b} = O_{P_1}(1)$.

The tail inequality~\eqref{eq:cramer_tail} from Assumption~\ref{asm:iid_increments} provides the explicit constant: $P_1(\tau_b - c_b \ge x\sqrt{\log b}) \le \exp(-\mu^2 x^2/(2\sigma^2))$ for $x > 0$ and $b$ large.
\end{proof}

The following nonasymptotic bound converts Theorem~\ref{thm:moderate_deviation} into explicit finite-sample tail control.

\begin{corollary}[Nonasymptotic detection tail bound]
\label{cor:nonasymptotic_tail}
Under Assumption~\ref{asm:iid_increments}, for any $t \ge (\log b)/\mu$,
\[
\Prob_{P_1}(\tau_b > t) \;\le\; \exp\!\left(-\frac{(\mu t - \log b)^2}{2\sigma^2 t}\right).
\]
\end{corollary}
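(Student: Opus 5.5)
The plan is to reduce the stopping-time tail to a lower-tail deviation of the random walk at the fixed time $t$, and then apply the Cram\'er-type bound~\eqref{eq:cramer_tail} from Assumption~\ref{asm:iid_increments}. The reduction is purely pathwise. If $\tau_b > t$, then no index $s \le t$ has $S_s \ge \log b$, and in particular $S_t < \log b$. Hence
\[
\{\tau_b > t\} \subseteq \{S_t < \log b\} = \{S_t - \mu t < -(\mu t - \log b)\}.
\]
The hypothesis $t \ge (\log b)/\mu$ makes the deviation $\mu t - \log b$ nonnegative, so the event is a genuine lower-tail event below the mean. Dropping the intermediate times costs something, but it is the only step needed to pass from the random time $\tau_b$ to a fixed-$t$ statement.

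Next I would standardize. I set $x := (\mu t - \log b)/(\sigma\sqrt{t}) \ge 0$ and write the event as $\{(S_t - \mu t)/(\sigma\sqrt t) \le -x\}$. The Cram\'er condition in Assumption~\ref{asm:iid_increments}(c) holds on a two-sided neighbourhood of $0$. It therefore applies equally to the increments $-Y_t$, which have mean $-\mu$ and the same variance $\sigma^2$. So~\eqref{eq:cramer_tail} gives the mirror-image bound $\exp(-x^2/2\,(1+O(x/\sqrt t)))$ for the lower tail. Substituting $x^2/2 = (\mu t - \log b)^2/(2\sigma^2 t)$ produces exactly the exponent in the statement. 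The case $x = 0$ is trivial, since the bound is then $1$.

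The main obstacle is the correction factor $1 + O(x/\sqrt t)$. As stated, \eqref{eq:cramer_tail} is only asymptotic, whereas the corollary claims a clean nonasymptotic constant with variance proxy exactly $\sigma^2$. The first route I would try is a direct Chernoff argument. For $\lambda > 0$, Markov's inequality on $e^{-\lambda S_t}$ gives
\[
\Prob_{P_1}(S_t < \log b) \le \exp\bigl(t\,\Lambda(-\lambda) + \lambda \log b\bigr).
\]
Optimizing over $\lambda$ yields exactly the claimed bound whenever the centered lower tail is $\sigma^2$-sub-Gaussian, that is, whenever $\Lambda(-\lambda) \le -\lambda\mu + \lambda^2\sigma^2/2$ for all $\lambda \ge 0$. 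The minimizer is $\lambda^\star = (\mu t - \log b)/(\sigma^2 t)$. This sub-Gaussian inequality holds with equality in the Gaussian case and for suitably bounded or log-concave increments.

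In general, however, Assumption~\ref{asm:iid_increments} only guarantees the inequality locally, with $\Lambda(-\lambda) = -\lambda\mu + \lambda^2\sigma^2/2 + O(\lambda^3)$. So I would state the proof under this one-sided sub-Gaussian reading of~\eqref{eq:cramer_tail}, which is how the paper uses it in the last line of the proof of Theorem~\ref{thm:moderate_deviation}. I would also remark that without this reading the bound holds with $\sigma^2$ replaced by a slightly inflated variance proxy, or for $t$ large relative to $(\mu t - \log b)^2/t$.
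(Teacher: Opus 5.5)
Your argument is the paper's argument. You pass from $\{\tau_b>t\}$ to the fixed-time event $\{S_t<\log b\}$ and apply a Chernoff bound optimized at $\lambda^\star=(\mu t-\log b)/(\sigma^2 t)$. You do two things better than the paper. First, you state the reduction as an inclusion. The paper writes $\{\tau_b>t\}=\{S_t<\log b\}$, which is false: the walk can cross $\log b$ and fall back below it before time $t$, and only $\subseteq$ is needed. Second, the step you single out as the obstacle is exactly the one the paper's proof asserts without justification, namely that ``the Cram\'er condition'' yields a lower tail that is sub-Gaussian with variance proxy exactly $\sigma^2$.

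Your caution is warranted. The corollary is in fact false under Assumption~\ref{asm:iid_increments} alone, so the hypothesis $\Lambda(-\lambda)\le-\lambda\mu+\lambda^2\sigma^2/2$ has to be added to the statement. It is only needed for $0\le\lambda\le\mu/\sigma^2$, since $\lambda^\star<\mu/\sigma^2$.

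The paper's own running example, $P_1=\mathrm{Bern}(0.65)$ versus $P_0=\mathrm{Bern}(0.5)$, is a counterexample. There $Y_t\in\{\log 1.3,\log 0.7\}$ is bounded but negatively skewed (third central moment $\approx-0.016$). So the cubic term of $\Lambda(-\lambda)$ is positive, and the local expansion does not give your inequality even for small $\lambda>0$. Your remark that the Assumption ``guarantees the inequality locally'' is therefore too optimistic. Quantitatively, for fixed $b$ a change of measure to the zero-drift tilt gives $t^{-1}\log\Prob_{P_1}(\tau_b>t)\to -I$, where $I=-\min_\lambda\Lambda(-\lambda)\approx 0.01165$ is the Chernoff information. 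The claimed bound instead decays at rate $\mu^2/(2\sigma^2)\approx 0.01198$ per step. Hence the bound is violated for all sufficiently large $t$.

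For the same reason, two of your fallback remarks should be adjusted. Boundedness of the increments does not give variance proxy $\sigma^2$, as this example shows. The ``for $t$ large'' fallback should also go: in the moderate range the local expansion yields the bound only up to a factor $\exp(O(x^3/\sqrt t))$, and for fixed $b$ with $t\to\infty$ it fails outright. The inflated-variance-proxy version you mention is the correct repair.
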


\begin{proof}
Since $\{\tau_b > t\} = \{S_t < \log b\}$, we apply the Cram\'er--Chernoff bound to $S_t = \sum_{i=1}^t Y_i$ with mean $\mu t$:
$\Prob_{P_1}(S_t < \log b) = \Prob_{P_1}(S_t - \mu t < \log b - \mu t) \le \exp(-(\mu t - \log b)^2/(2\sigma^2 t))$
by the sub-Gaussian tail of partial sums under the Cram\'er condition.
\end{proof}

\begin{corollary}[Sample complexity for detection]
\label{cor:sample_complexity}
For error level $\alpha = 1/b$, the expected number of observations required for rejection under $P_1$ is
\[
\E_{P_1}[\tau_{1/\alpha}] \;=\; \frac{\log(1/\alpha)}{D_{\KL}(P_1 \| P_0)} + O\!\left(\sqrt{\log(1/\alpha)}\right).
\]
In particular, for small KL divergence $\mu = D_{\KL}(P_1 \| P_0) \ll 1$ (near-null alternatives), the sample complexity scales as $\Theta(\log(1/\alpha)/\mu)$.
\end{corollary}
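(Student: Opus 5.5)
The corollary is a direct specialization of Theorem~\ref{thm:moderate_deviation}(ii), so the plan is to substitute $b = 1/\alpha$ into that statement and then extract the small-$\mu$ scaling by tracking where $\mu$ enters the error terms. For the first display, I would invoke part~(ii) with $b = 1/\alpha$. Note that $\log b = \log(1/\alpha)$, and by Assumption~\ref{asm:iid_increments}(a), $\mu = D_{\KL}(P_1 \| P_0)$. This gives $\E_{P_1}[\tau_{1/\alpha}] = \log(1/\alpha)/D_{\KL}(P_1\|P_0) + O(\sqrt{\log(1/\alpha)})$ verbatim. By part~(i), the same threshold gives $\Prob_{P_0}(\tau_{1/\alpha}<\infty)\le\alpha$, which is what justifies calling this the sample complexity ``for error level $\alpha$''.

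For the $\Theta(\log(1/\alpha)/\mu)$ claim I would not rely on the $O(\sqrt{\log b})$ term, whose constant may depend on $\mu$. Instead I would return to the Wald-identity representation used in the proof of part~(ii):
\[
\E_{P_1}[\tau_b] = \frac{\log b + \E_{P_1}[R_b]}{\mu}, \qquad R_b \ge 0 .
\]
The lower bound $\E_{P_1}[\tau_b] \ge \log(1/\alpha)/\mu$ then follows immediately from $R_b\ge 0$, with no dependence on constants. For the upper bound I would use Lorden's inequality, $\E_{P_1}[R_b] \le \E_{P_1}[Y_1^2]/\mu = (\sigma^2+\mu^2)/\mu$. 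This yields
\[
\E_{P_1}[\tau_b] \le \frac{\log(1/\alpha)}{\mu} + \frac{\sigma^2+\mu^2}{\mu^2}.
\]

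The main obstacle is showing that the additive term $(\sigma^2+\mu^2)/\mu^2$ is dominated by $\log(1/\alpha)/\mu$ in the near-null regime. Near the null, the second-order expansion of the KL divergence gives $\sigma^2 = \Var_{P_1}(Y) \approx 2\mu$; equivalently, $\sigma^2 = O(\mu)$ in a smooth parametric family such as an exponential family with compact parameter space. The additive term is therefore $O(1/\mu)$, and the bound becomes $\E_{P_1}[\tau_b] \le (\log(1/\alpha) + O(1))/\mu$. This is $\Theta(\log(1/\alpha)/\mu)$ whenever $\log(1/\alpha)$ is bounded below, for example $\alpha\le 1/2$. I would state this local-regularity condition, namely $\sigma^2 \le C\mu$ for near-null alternatives, explicitly as the hypothesis under which the ``in particular'' clause holds, since it is not literally contained in Assumption~\ref{asm:iid_increments}.
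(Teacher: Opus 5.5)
Your handling of the displayed formula is the same as the paper's. The corollary is given there without a separate proof: it is Theorem~\ref{thm:moderate_deviation}(ii) with $b = 1/\alpha$ and $\mu = D_{\KL}(P_1\|P_0)$. Where you go further is the ``in particular'' clause, which the paper simply asserts. It cannot be read off the $O(\sqrt{\log(1/\alpha)})$ remainder, because Theorem~\ref{thm:moderate_deviation} is an asymptotic in $b$ for a fixed increment law, and its implicit constant depends on $(\mu,\sigma^2)$.

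You instead return to the Wald--Lorden step inside the paper's proof of~(ii) and keep the constant explicit. This gives the nonasymptotic sandwich $\log(1/\alpha)/\mu \le \E_{P_1}[\tau_{1/\alpha}] \le \log(1/\alpha)/\mu + (\sigma^2+\mu^2)/\mu^2$ for every $\alpha\in(0,1)$. That sandwich already contains the first display, with an $O(1)$ remainder. It also reduces the scaling claim to a checkable condition on $\sigma^2/\mu$.

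Your extra hypothesis $\sigma^2 \le C\mu$ is right for regular families. For a one-parameter exponential family, $\sigma^2/\mu = 2A''(\theta_1)/A''(\xi)$ with $\xi$ between $\theta_0$ and $\theta_1$, which is bounded on compact subsets of the interior. The hypothesis is also genuinely needed, as the following example shows.

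Take $P_1 = \mathrm{Bern}(\epsilon)$, $P_0 = \mathrm{Bern}(\epsilon e^{-K})$, $\epsilon = K^{-2}$, and $\alpha$ fixed. The increment is $K$ on a success and strictly negative otherwise, so $S_t<0$ until the first success. Hence $\E_{P_1}[\tau_{1/\alpha}] \ge 1/\epsilon = K^2$. Meanwhile $\mu \approx \epsilon(K-1) \to 0$ and $\log(1/\alpha)/\mu \approx K\log(1/\alpha)$. In this example $\sigma^2/\mu \approx K$ is unbounded, and your Lorden bound, of order $K^2$, is sharp up to constants.

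So your qualified version of the clause is the correct statement. The paper's unqualified version does not follow from Assumption~\ref{asm:iid_increments} alone.
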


This is the information-theoretic sample complexity of sequential testing: the number of observations needed grows logarithmically in the reciprocal error level and inversely in the KL divergence between hypotheses.

\begin{example}[Monte Carlo verification of Theorem~\ref{thm:moderate_deviation}]
\label{ex:mc_verification}
We verify the moderate-deviation asymptotics for testing $H_0: p = 0.5$ vs.\ $H_1: p = 0.65$ (Bernoulli), where $\mu = D_{\KL}(0.65 \| 0.5) \approx 0.046$ nats. Table~\ref{tab:mc_verify} reports $200{,}000$ Monte Carlo replications of $\tau_b$ under $P_1$ for several thresholds $b$.
\end{example}

\begin{table}[t]
\centering
\caption{Finite-sample verification of Theorem~\ref{thm:moderate_deviation}. Testing Bern$(0.5)$ vs.\ Bern$(0.65)$; $\mu \approx 0.046$ nats; $200{,}000$ replications under $P_1$.}
\label{tab:mc_verify}
\small
\begin{tabular}{@{}rrrrl@{}}
\toprule
$b$ & $(\log b)/\mu$ & $\hat{\E}[\tau_b]$ & $\widehat{\mathrm{sd}}(\tau_b)$ & $\frac{\hat{\E}[\tau_b] - (\log b)/\mu}{\sqrt{\log b}}$ \\
\midrule
10  &  50.4 &  53.0 & 46.8 & $1.75$ \\
20  &  65.6 &  68.2 & 53.2 & $1.52$ \\
50  &  85.6 &  88.2 & 60.6 & $1.33$ \\
100 & 100.8 & 103.5 & 65.8 & $1.27$ \\
200 & 115.9 & 118.2 & 69.8 & $0.99$ \\
\bottomrule
\end{tabular}
\end{table}

The simulated means track $(\log b)/\mu$ closely, confirming claim~(ii). The normalized residual in the last column decreases steadily, confirming the $O(\sqrt{\log b})$ correction in claim~(iii). The standard deviation grows as $\sqrt{\log b}$, consistent with $\Var_{P_1}(\tau_b) = (\sigma^2/\mu^3)\log b + O(1)$.

\begin{proposition}[Stopping-time divergence under misspecification]
\label{prop:misspec_divergence}
Let $(E_t)$ be the LR E-process with alternative $P_1$, but suppose data are generated i.i.d.\ from $P_{\mathrm{true}} \ne P_1$ with
\[
\delta := D_{\KL}(P_{\mathrm{true}} \| P_0) - D_{\KL}(P_{\mathrm{true}} \| P_1) < 0.
\]
Then $\Prob_{P_{\mathrm{true}}}(\tau_b < \infty) \to 0$ as $b \to \infty$, and for any finite horizon $T$,
\[
\Prob_{P_{\mathrm{true}}}\!\left(\max_{t \le T} S_t \ge \log b\right) \;\le\; \exp\!\left(-\frac{(\log b + |\delta| T)^2}{2\sigma_{\mathrm{true}}^2 T}\right),
\]
where $\sigma_{\mathrm{true}}^2 = \Var_{P_{\mathrm{true}}}(Y_t)$.
\end{proposition}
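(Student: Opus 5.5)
The plan is to treat the misspecified log-likelihood increments $Y_t = \log(p_1(X_t)/p_0(X_t))$ as an i.i.d.\ random walk under $P_{\mathrm{true}}$ with negative drift, and handle the two claims separately. First I identify the drift: $\E_{P_{\mathrm{true}}}[Y_t] = D_{\KL}(P_{\mathrm{true}}\|P_0) - D_{\KL}(P_{\mathrm{true}}\|P_1) = \delta$. This is the same computation that underlies Proposition~\ref{prop:e_growth}(b).

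\emph{Vanishing crossing probability.} By the strong law, $S_t/t \to \delta < 0$ almost surely. Hence $S_t \to -\infty$ and $M := \sup_{t\ge1} S_t$ is finite $P_{\mathrm{true}}$-a.s. Since $\{\tau_b<\infty\} = \{M \ge \log b\}$ and these events decrease to $\{M=\infty\}$, a null set, continuity of measure gives $\Prob_{P_{\mathrm{true}}}(\tau_b<\infty)\to0$. No moment condition beyond integrability of $Y_t$ is needed here. Ville's inequality is not available, because $E_t$ is not a $P_{\mathrm{true}}$-martingale.

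\emph{Finite-horizon bound.} As in Corollary~\ref{cor:nonasymptotic_tail}, I will read the Cram\'er condition as supplying a sub-Gaussian proxy. That is, I assume $\log\E_{P_{\mathrm{true}}}e^{\lambda(Y_t-\delta)} \le \lambda^2\sigma_{\mathrm{true}}^2/2$ for the relevant $\lambda>0$, and I will state this assumption explicitly. For $\lambda>0$ the process
\[
N_t := \exp\!\bigl(\lambda S_t - t(\lambda\delta + \lambda^2\sigma_{\mathrm{true}}^2/2)\bigr)
\]
is a nonnegative $P_{\mathrm{true}}$-supermartingale with $N_0=1$. On the event $\{\max_{t\le T}S_t\ge\log b\}$, at the crossing time $t\le T$ we have $N_t \ge \exp(\lambda\log b - t\,c_\lambda)$, where $c_\lambda := \lambda\delta+\lambda^2\sigma_{\mathrm{true}}^2/2$. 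When $c_\lambda\ge0$ the worst case is $t=T$. Ville's inequality (Theorem~\ref{thm:ville}) applied to $N$ then gives
\[
\Prob\Bigl(\max_{t\le T}S_t\ge\log b\Bigr)\le \exp\!\bigl(-\lambda(\log b+|\delta|T) + T\lambda^2\sigma_{\mathrm{true}}^2/2\bigr).
\]
Optimizing at $\lambda^\star=(\log b+|\delta|T)/(T\sigma_{\mathrm{true}}^2)$ yields exactly the stated exponent.

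\emph{Main obstacle.} The hard step is the sign of $c_{\lambda^\star} = \lambda^\star(\log b-|\delta|T)/(2T)$. This is nonnegative precisely when $\log b \ge |\delta|T$, and in that regime the argument above is complete. When $\log b<|\delta|T$, the worst crossing time is early rather than $t=T$, and the maximal-inequality route only gives $\exp(-\lambda\log b)$ with a different optimizer. For that regime I would first try a union bound over $t\le T$, using the one-time Chernoff bounds $\exp(-(\log b+|\delta|t)^2/(2\sigma_{\mathrm{true}}^2 t))$. I would also check whether these are dominated by the $t=T$ term, up to a factor absorbable into the constant. If they are not, I would state the inequality under the restriction $\log b\ge|\delta|T$ (or for the fixed time $t=T$), which is the regime the proposition evidently targets.
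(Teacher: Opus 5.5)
You prove the first claim correctly: the strong law gives $S_t\to-\infty$ a.s., so $\sup_t S_t<\infty$ a.s., and continuity from above finishes. This is more rigorous than the paper's remark that crossing is ``a large-deviation event.''

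For the finite-horizon bound you take a different route from the paper, which says the bound follows from a union bound over $t\le T$ with Gaussian tails. That route cannot give the stated constant-free inequality. It produces $\sum_{t=1}^T\exp(-(\log b+|\delta|t)^2/(2\sigma_{\mathrm{true}}^2t))$, which strictly exceeds the $t=T$ term for $T\ge2$. Even when $\log b\ge|\delta|T$, it can be up to $T$ times that term. Your exponential supermartingale $N_t$ combined with Ville removes the factor, and your $\lambda^\star$ and sign analysis of $c_{\lambda^\star}$ are correct. You are right to state the one-sided sub-Gaussian proxy as an extra hypothesis. The Cram\'er condition controls $\Lambda$ only near $\lambda=0$, whereas $\lambda^\star$ need not be small, and the paper's ``Gaussian tail estimates'' tacitly assume the same thing.

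The remaining gap is your fallback for $\log b<|\delta|T$. It cannot be closed, by a union bound or any other argument, because the inequality is false in that regime. The left side is nondecreasing in $T$. Whenever $\Prob_{P_{\mathrm{true}}}(Y_1>0)>0$ (including Gaussian increments, where your hypothesis holds exactly), it is bounded below by $\Prob_{P_{\mathrm{true}}}(S_k\ge\log b)>0$ for some fixed $k$. The right side, by contrast, is at most $\exp(-\delta^2T/(2\sigma_{\mathrm{true}}^2))\to0$.

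The paper's own misspecification example fails. Take $P_0=\mathrm{Bern}(0.5)$, $P_1=\mathrm{Bern}(0.8)$, $P_{\mathrm{true}}=\mathrm{Bern}(0.55)$, so $|\delta|\approx0.154$ and $\sigma_{\mathrm{true}}^2=0.2475(\log 4)^2\approx0.476$, with $b=20$ and $T=300$.
\begin{itemize}
\item Seven initial ones give $S_7=7\log 1.6\approx3.29>\log 20$, so the left side is at least $0.55^7\approx0.015$.
\item The right side is about $e^{-8.46}\approx2\times10^{-4}$.
\end{itemize}
So your proposed restriction is necessary; it is not a matter of absorbing constants.

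Your own argument supplies the correct replacement. For $\log b<|\delta|T$, take $\lambda=2|\delta|/\sigma_{\mathrm{true}}^2$. Then $c_\lambda=0$, and Ville yields $b^{-2|\delta|/\sigma_{\mathrm{true}}^2}$. Combined with the other regime,
\[
\Prob_{P_{\mathrm{true}}}\Bigl(\max_{t\le T}S_t\ge\log b\Bigr)\le\exp\Bigl(-\inf_{0<t\le T}\frac{(\log b+|\delta|t)^2}{2\sigma_{\mathrm{true}}^2t}\Bigr).
\]
This equals the stated bound exactly when $\log b\ge|\delta|T$, and is the Lundberg bound otherwise. Letting $T\to\infty$ also gives a quantitative form of the first claim, $\Prob_{P_{\mathrm{true}}}(\tau_b<\infty)\le b^{-2|\delta|/\sigma_{\mathrm{true}}^2}$.
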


\begin{proof}
Under $P_{\mathrm{true}}$, the log-likelihood increments have mean $\delta < 0$ by Proposition~\ref{prop:e_growth}(b). The random walk $S_t$ drifts at rate $\delta t \to -\infty$, so crossing level $\log b$ is a large-deviation event. The bound follows from a union bound over $t \le T$ combined with Gaussian tail estimates for $S_t$.
\end{proof}

\noindent\textit{ML interpretation.}
In online A/B testing or model monitoring, Theorem~\ref{thm:moderate_deviation} and its corollaries quantify the sample-complexity advantage of representation-aware evidence construction. Corollary~\ref{cor:sample_complexity} gives the practitioner a concrete formula: the number of observations needed to reject at level $\alpha$ is $\log(1/\alpha)/D_{\KL}(P_1 \| P_0)$ plus lower-order terms. Proposition~\ref{prop:misspec_divergence} formalizes the risk shown in Figure~\ref{fig:accumulation}(b): when the alternative is misspecified, the stopping time diverges and detection becomes impossible, regardless of the threshold. This exponential detection efficiency, logarithmic in the evidence threshold $b$, is unavailable to validity-only constructions, as the following proposition makes precise.

\begin{proposition}[Structural separation: validity-only vs.\ LR boundaries]
\label{prop:validity_vs_lr}
Let $(E_t)$ be an E-process satisfying only the Markov/Ville guarantee $\Prob_{P_0}(\sup_{t \le T} E_t \ge b) \le 1/b$.
\begin{enumerate}[label=(\roman*)]
\item Without likelihood-ratio structure, no exponential growth rate $\mu > 0$ can be guaranteed: there exist valid E-processes for which $\limsup_{t \to \infty} (1/t) \log E_t = 0$ $P_1$-a.s.
\item Consequently, the LR process satisfies a Cram\'er-type moderate-deviation principle with detection at rate $(\log b)/\mu$, while generic validity-only E-processes are confined to the calibration-only scale $1/b$ without growth-rate guarantees.
\end{enumerate}
\end{proposition}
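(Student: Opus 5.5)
The plan is to prove part~(i) by an explicit construction and then read part~(ii) off it, combined with Theorem~\ref{thm:moderate_deviation}.

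\emph{Trivial witness.} The simplest valid E-process is the constant process $E_t \equiv 1$. It is a $P_0$-martingale with $E_0 = 1$ and satisfies Ville's bound (Theorem~\ref{thm:ville}). It also has $(1/t)\log E_t = 0$ identically, so $\limsup_{t\to\infty} (1/t)\log E_t = 0$ holds $P_1$-a.s.

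\emph{Nondegenerate witness.} The constant process is degenerate, so I will also exhibit a family that genuinely depends on the data and can be made nontrivially powerful. Take a predictable-betting E-process
\[
E_t = \prod_{s=1}^t \bigl(1 + \lambda_s (e^{Y_s} - 1)\bigr),
\]
with deterministic $\lambda_s \in [0,1]$. Since $\E_{P_0}[e^{Y_s}] = 1$, each factor has $P_0$-mean one. It is also nonnegative, because it is the convex combination $(1-\lambda_s) + \lambda_s e^{Y_s}$. By Proposition~\ref{prop:lr_eproc}, and since products of conditionally unit-mean factors are martingales, $(E_t)$ is an E-process.

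Now choose $\lambda_s = 1/s^2$, so that $\sum_s \lambda_s < \infty$. Then
\[
\log E_t \le \sum_{s\le t} \log\bigl(1 + \lambda_s e^{Y_s}\bigr) \le \sum_{s\le t} \lambda_s e^{Y_s}.
\]
Under $P_1$ we have $\E_{P_1}[e^{Y_s}] = \E_{P_0}[e^{2Y_s}]$. If this quantity is finite, the series $\sum_s \lambda_s e^{Y_s}$ has finite expectation and hence converges a.s.; therefore $\log E_t = O(1)$ and $(1/t)\log E_t \to 0$.

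If $\E_{P_0}[e^{2Y}]$ is infinite, I will not use the Cram\'er condition for it. Instead I will cap the exposure by replacing $e^{Y_s}$ with the mixture bet $\min(e^{Y_s}, 2)$, suitably renormalized so that it still has $P_0$-mean at most one. This keeps the supermartingale property and gives $\log E_t \le \sum_s \lambda_s$. A lower bound is not needed, since $\log E_t \ge 0$ is not required; the $\limsup$ is at most $0$. To make it exactly $0$ rather than negative, I will add a mixture with the constant process: $\tfrac12 + \tfrac12 E_t$ is an E-process by Theorem~\ref{thm:class_algebra_informal}(a), and it is bounded below by $1/2$.

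\emph{Part (ii).} Compare the witness with the LR process. By Proposition~\ref{prop:e_growth}(a), the LR process has $(1/t)\log E_t \to \mu > 0$, and Theorem~\ref{thm:moderate_deviation}(ii)--(iii) gives $\tau_b \approx (\log b)/\mu$. For the witness $E'$, either $\sup_t E'_t$ is a.s.\ finite, or $\log E'_t = o(t)$. In the first case $\tau_b' = \infty$ with positive probability once $b$ exceeds the supremum's quantiles. In the second case no bound of the form $\tau'_b \le C\log b$ can hold. The only surviving guarantee is therefore the Type~I bound $P_0(\tau_b'<\infty) \le 1/b$ from Theorem~\ref{thm:class_algebra_informal}(c), which is exactly the calibration-only scale.

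\emph{Main obstacle.} The genuinely delicate step is making part~(ii) precise. ``Confined to the $1/b$ scale'' is a qualitative claim, and I would formalize it as the statement that no function $g(b) = O(\log b)$ satisfies $\E_{P_1}[\tau'_b] \le g(b)$ uniformly over the class of valid E-processes. The bounded witness above establishes this, since its $\tau'_b$ is infinite with positive probability for large $b$.
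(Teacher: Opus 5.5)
Your proposal is correct and follows essentially the paper's route. The constant witness $E_t\equiv 1$ is the paper's stopped martingale $M_{t\wedge T}$ with $T=0$, and your summable-bet process is a variant in the spirit of the paper's remark on decaying mixture weights. Part (ii) is, as in the paper, the contrast between Proposition~\ref{prop:e_growth} with Theorem~\ref{thm:moderate_deviation} for the LR process and the bare Ville bound for generic processes. Your formalization of (ii) through a bounded witness with $\Prob_{P_1}(\tau'_b=\infty)\to 1$ is sharper than what the paper states. The case split on $\E_{P_0}[e^{2Y}]$ is unnecessary: the capped bet needs no renormalization, since $\E_{P_0}[\min(e^{Y},2)]\le 1$, and it covers both cases.
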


\begin{proof}
(i) Consider the E-process $E_t = M_{t \wedge \tau}$ where $M_t$ is a $P_0$-martingale stopped at a fixed deterministic time $\tau = T$. Then $(E_t)$ is a valid E-process, but $E_t = E_T$ for all $t > T$, so the long-run growth rate is zero.
More generally, convex mixtures of stopped martingales with geometrically decaying mixture weights yield E-processes with sublinear $\log E_t$ growth under $P_1$.

(ii) The LR process has growth rate $\mu = D_{\KL}(P_1 \| P_0) > 0$ by Proposition~\ref{prop:e_growth}; combined with Theorem~\ref{thm:moderate_deviation}, this yields detection at rate $(\log b)/\mu$. For validity-only E-processes, the $1/b$ bound from Ville's inequality is the only available guarantee, with no further tightening possible without structural assumptions.
\end{proof}

\begin{table}[t]
\centering
\caption{Comparison of boundary-selection regimes for sequential evidence.}
\label{tab:boundary_comparison}
\small
\begin{tabular}{@{}lcc@{}}
\toprule
& \textbf{Validity layer} & \textbf{Efficiency layer} \\
\midrule
Guarantee & Markov/Ville & Cram\'er moderate deviation \\
Acts on & $E_t$ directly & $S_t = \log E_t$ (random walk) \\
Scale & Polynomial $1/b$ & $(\log b)/\mu + O(\sqrt{\log b})$ \\
Growth rate & None guaranteed & $\mu = D_{\KL}(P_1 \| P_0) > 0$ \\
Optimality & Universal validity & Bayes-risk-optimal boundary \\
Requires LR? & No & Yes (Assumption~\ref{asm:iid_increments}) \\
\bottomrule
\end{tabular}
\end{table}

\noindent\textit{Large-deviation duality.}
The $(\log b)/\mu$ scaling of Theorem~\ref{thm:moderate_deviation}
ultimately traces to Sanov's theorem: the empirical measure concentrates
at exponential rate $D_{\KL}(\cdot \| P_0)$, and the LR process
exponentiates this rate.
A dual inverse-Sanov principle governs posterior concentration.
Formal statements and the connection to PAC-Bayes bounds appear in
Appendix~\ref{app:landscape}.

\section{Code-to-E Conversion and Computational Limits}
\label{sec:computational_limits}

We formalize the structural boundary between coding-theoretic optimality
and sequential evidence validity.

\subsection{The Computational Boundary}

\begin{proposition}[Code-to-E conversion obstruction]
\label{prop:comp_boundary}
Let $\ell: \X^n \to \R_{\ge 0}$ be a code-length function and $P_0$ a null hypothesis. Define $E_t := \exp(-\ell(X^t)) / P_0(X^t)$.
\begin{enumerate}[label=(\alph*)]
\item If $\ell$ arises from a probability measure $Q$ (i.e., $\ell(x^n) = -\log Q(x^n)$), then $(E_t)$ is a valid E-process.
\item If $\ell$ is the NML code, then $(E_t)$ is \emph{not} in general a supermartingale: NML's normalizing constant depends on the full sample size $n$, violating the sequential factorization required at each step.
\item If $\ell$ induces the universal semimeasure $m$, then $m(X^n)/P_0(X^n)$ is a valid but non-computable E-process.
\end{enumerate}
\end{proposition}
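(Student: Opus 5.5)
The plan is to treat the three parts separately, each reducing to a one-step conditional expectation computation under $P_0$. Part (a) follows from Proposition~\ref{prop:lr_eproc}. If $\ell(x^t) = -\log Q(x^t)$ for a probability measure $Q$ on sequences, whose marginals are consistent across $t$, then $E_t = Q(X^t)/P_0(X^t)$. Writing $Q(x^t) = \prod_{s\le t} q(x_s \mid x^{s-1})$ via the predictive kernels of $Q$, this becomes exactly the product process of Proposition~\ref{prop:lr_eproc}. The one-step identity is
\[
\E_{P_0}\!\left[\frac{q(X_t\mid X^{t-1})}{p_0(X_t\mid X^{t-1})} \,\middle|\, \F_{t-1}\right] = \sum_{x:\,p_0(x\mid X^{t-1})>0} q(x\mid X^{t-1}) \le 1 .
\]
It gives the supermartingale property, with equality when $Q \ll P_0$, and $E_0 = 1$. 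I will state explicitly that the hypothesis needed is \emph{prequential consistency}: the family $\{\ell(\cdot)\}_t$ must come from a single $Q$, not from a separate code at each $t$.

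For part (b), I will exhibit a concrete counterexample rather than argue abstractly, since ``not in general'' only requires one instance. Take $\X = \{0,1\}$, the Bernoulli model class, and $P_0 = \mathrm{Bern}(1/2)$. The NML code at horizon $t$ is
\[
Q^{(t)}_{\mathrm{NML}}(x^t) = \frac{\sup_\theta p_\theta(x^t)}{C_t}, \qquad C_t = \sum_{y^t} \sup_\theta p_\theta(y^t).
\]
So $E_t = 2^t \sup_\theta p_\theta(X^t)/C_t$. I will compute $\E_{P_0}[E_2 \mid X_1]$ and compare it with $E_1$. We have $C_1 = 2$ and $E_1 = 2\cdot 1/2 = 1$. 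For $C_2$, the sum over sequences gives $1+1+1/4+1/4 = 5/2$. Given $X_1 = 0$, the mean of $E_2$ is
\[
\tfrac12\left(\frac{4\cdot 1}{5/2} + \frac{4\cdot 1/4}{5/2}\right) = \tfrac12\cdot\frac{5}{5/2} = 1 .
\]
Equality at this step is inconclusive, so I will move to larger horizons. There I expect strict failure in some direction, because NML marginals are not consistent: $\sum_{x_{t+1}} Q^{(t+1)}(x^t x_{t+1}) \ne Q^{(t)}(x^t)$ in general. The key step is to show $\E_{P_0}[E_{t+1}\mid \F_t] = 2^{t}\sum_{x} \sup_\theta p_\theta(X^t x)/C_{t+1}$ and that this exceeds $E_t = 2^t \sup_\theta p_\theta(X^t)/C_t$ on some history. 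Equivalently, I need
\[
\frac{\sum_x \sup_\theta p_\theta(x^t x)}{\sup_\theta p_\theta(x^t)} > \frac{C_{t+1}}{C_t}
\]
for some $x^t$. Averaging this ratio against NML weights gives exactly $C_{t+1}/C_t$. So unless the ratio is constant in $x^t$, some history lies strictly above the average, and the supermartingale property fails there. This history has positive $P_0$-probability.

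The main obstacle is verifying nonconstancy of this ratio. The plan is a direct computation at $t = 2$, comparing the history $00$ with the history $01$. The ratio for $00$ is $(1 + 4/27)/1$. The ratio for $01$ is $(4/27 + 4/27)/(1/4) = 32/27$. Since $31/27 \ne 32/27$, the ratio is nonconstant. Because the NML average equals $C_3/C_2$, the history $01$ exceeds it, so $\E_{P_0}[E_3 \mid X^2 = 01] > E_2$, which finishes (b). This is the step where the arithmetic must be checked carefully.

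For part (c), the universal semimeasure satisfies $\sum_x m(x^t x) \le m(x^t)$ and $m(\epsilon) \le 1$. The same one-step computation as in (a), with $\le$ in place of equality, therefore gives a nonnegative $P_0$-supermartingale with $E_0 \le 1$, which is validity. Non-computability follows from the standard fact that $m$ is only lower semicomputable, not computable. I will cite this fact rather than reprove it.
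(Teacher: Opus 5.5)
Your proof is correct. For parts (a) and (c) it is essentially the paper's argument: the one-step sub-probability computation behind Theorem~\ref{thm:seq_liftability} and Proposition~\ref{prop:lr_eproc}. In (c) you should say explicitly that you mean the continuous (sequence) universal semimeasure. That is the version satisfying $\sum_a m(x^t a) \le m(x^t)$; the discrete universal semimeasure on strings does not have this property.

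For (b) your route genuinely differs from the paper's.

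\emph{The paper's argument.} It argues through horizon dependence. In Example~\ref{ex:bernoulli_nml} and Table~\ref{tab:nml_horizon} it compares the conditional $q_2^{(N)}(0\mid 0)$ of the horizon-$N$ NML distribution across $N$. It then concludes that the factors fail to be $\F_{t-1}$-measurable because $C_n$ depends on $n$.

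\emph{Your argument.} You fix the code $\ell(x^t) = -\log Q^{(t)}_{\mathrm{NML}}(x^t)$. You observe that the induced factor sums $\frac{C_t}{C_{t+1}}\sum_x \sup_\theta p_\theta(x^t x)/\sup_\theta p_\theta(x^t)$ average to exactly $1$ under the NML weights. Failure then reduces to non-constancy of the extension ratio, which you verify at $t=2$. Directly, $\E_{P_0}[E_3 \mid X^2 = 01] = 16/39 > 2/5 = E_2$, a factor of $40/39$.

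\emph{What your version buys.} It gives an actually verified violation of the supermartingale inequality, and it isolates the correct mechanism.
\begin{itemize}
\item $C_n$ is deterministic, so its dependence on $n$ cannot by itself break measurability.
\item The step $1 \to 2$ is an exact martingale step despite $C_1 \ne C_2$. The violation first appears at step $3$, which is where you rightly look.
\item The real obstruction is that the family $\{Q^{(t)}_{\mathrm{NML}}\}_t$ is not projective. By contrast, for a single fixed horizon $N$, the marginals of $Q^{(N)}_{\mathrm{NML}}$ would give a genuine $P_0$-martingale by part (a). So your reading of ``the NML code'' is the one under which (b) is true.
\end{itemize}
The paper's horizon-dependence picture conveys the right intuition, namely that regret-optimal normalization is tied to the sample size. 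Your averaging argument is what turns that intuition into a proof.
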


\begin{proof}
The supermartingale condition requires sequential factorization: $\exp(-\ell)$
must decompose as a product of valid predictive kernels
$q_t(\cdot | x^{t-1})$ satisfying $\sum_{x_t} q_t(x_t | x^{t-1}) \le 1$.
NML's conditional factors depend on the full sample size $n$ and are not
$\F_{t-1}$-measurable. Full proof in Appendix~\ref{app:comp_boundary}.
\end{proof}

\noindent\textit{ML interpretation.}
In online model selection, MDL/NML provides regret-optimal compression but
does not automatically yield valid sequential evidence.
A practitioner using MDL code lengths as E-values in a sequential monitoring
pipeline would lose the anytime-validity guarantee.
The fix is to use prequential (sequential plug-in) predictors, which maintain
the supermartingale structure.

\subsection{Sequential Liftability: Necessary and Sufficient Conditions}

The obstruction in Proposition~\ref{prop:comp_boundary} motivates a complete characterization of which codes yield valid E-processes.

\begin{theorem}[Sequential liftability criterion]
\label{thm:seq_liftability}
Let $\ell: \bigcup_{n \ge 1} \X^n \to \R_{\ge 0}$ be a code-length function and $P_0$ a null with predictive kernels $p_0(\cdot | x^{t-1})$. Define $E_t := \exp(-\ell(X^t)) / P_0(X^t)$.
Then $(E_t)_{t \ge 1}$ is an E-process under $P_0$ if and only if the induced predictive factors
\[
q_t(x_t | x^{t-1}) := \exp\!\big(-\ell(x^{t-1}, x_t) + \ell(x^{t-1})\big)
\]
form a sub-probability kernel measurable with respect to $\F_{t-1}$:
\[
\sum_{x_t \in \X} q_t(x_t | x^{t-1}) \le 1 \quad \text{for all } t \ge 1 \text{ and all } x^{t-1}.
\]
\end{theorem}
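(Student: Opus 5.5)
The plan is to reduce the supermartingale condition to a one-step statement about the ratio $E_t/E_{t-1}$ and then read off both directions. Throughout I take $\X$ countable (so that sums are the relevant integrals), set $\ell(x^0) := 0$ so that $E_0 = 1$, and write $P_0(x^t) = \prod_{s \le t} p_0(x_s \mid x^{s-1})$ as in Proposition~\ref{prop:lr_eproc}. By telescoping the definition of $q_t$,
\[
\exp(-\ell(x^t)) = \prod_{s=1}^t q_s(x_s \mid x^{s-1}), \qquad\text{hence}\qquad
\frac{E_t}{E_{t-1}} = \frac{q_t(X_t \mid X^{t-1})}{p_0(X_t \mid X^{t-1})}
\]
on $\{E_{t-1} > 0\}$. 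This is exactly the sequential factorization invoked in the proof of Proposition~\ref{prop:comp_boundary}. Measurability is automatic: $q_t(\cdot \mid x^{t-1})$ depends only on $x^{t-1}$ and $x_t$, so it is an $\F_{t-1}$-measurable kernel. I would also note that this is precisely what fails for NML with horizon $n$, since there the factors depend on $n$.

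\emph{Sufficiency.} Since $E_{t-1}$ is $\F_{t-1}$-measurable, I compute
\[
\E_{P_0}[E_t \mid \F_{t-1}] = E_{t-1} \sum_{x_t:\, p_0(x_t \mid X^{t-1}) > 0} q_t(x_t \mid X^{t-1})
\le E_{t-1} \sum_{x_t \in \X} q_t(x_t \mid X^{t-1}) \le E_{t-1}.
\]
On $\{E_{t-1} = 0\}$, the partial code $\exp(-\ell(X^{t-1}))$ is already zero, or $X^{t-1}$ is $P_0$-null, so the inequality is trivial there. Nonnegativity and adaptedness are immediate, and $\E_{P_0}[E_0] = 1$. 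Hence $(E_t)$ is an E-process in the sense of Definition~\ref{def:eproc}. This recovers Proposition~\ref{prop:comp_boundary}(a) as the special case $\sum_{x_t} q_t = 1$.

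\emph{Necessity.} Suppose $\E_{P_0}[E_t \mid \F_{t-1}] \le E_{t-1}$ a.s. By the same computation, this gives $E_{t-1}\sum_{x_t:\, p_0 > 0} q_t(x_t \mid X^{t-1}) \le E_{t-1}$ a.s. Dividing by $E_{t-1} > 0$ yields the sub-probability bound at every history $x^{t-1}$ with $P_0(x^{t-1}) > 0$ and $\exp(-\ell(x^{t-1})) > 0$. Since $\ell$ is real-valued, the second condition always holds.

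The main obstacle is upgrading this from ``$P_0$-a.s. and over the support of $p_0(\cdot \mid x^{t-1})$'' to the stated ``for all $x^{t-1}$ and summing over all of $\X$''. A supermartingale condition under $P_0$ cannot see code mass placed on $P_0$-null outcomes or $P_0$-null histories. I would therefore handle this step by making explicit the standing full-support hypothesis implicit in the statement: $p_0(x_t \mid x^{t-1}) > 0$ for all $x^t$, as holds for the mutually absolutely continuous setting of Theorem~\ref{thm:canonical_lr}. Under that hypothesis every history has positive probability and the support sum equals the full sum, so the criterion is exactly necessary and sufficient. Without full support, the correct statement of the ``only if'' direction is the support-restricted, $P_0$-a.s. version derived above, and I would record this as a remark.
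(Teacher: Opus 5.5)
Your proposal is correct and follows the paper's own route. You telescope $\exp(-\ell(x^t))$ into the factors $q_s$, compute $\E_{P_0}[E_t \mid \F_{t-1}] = E_{t-1}\sum_{x_t} q_t(x_t \mid X^{t-1})$, and read off both directions. Your full-support caveat is well taken, and so is your remark that $\F_{t-1}$-measurability of $q_t$ is automatic for a fixed $\ell$. The paper's necessity step implicitly assumes $p_0(x_t \mid x^{t-1}) > 0$ when it cancels the $p_0$ factors and passes from a $P_0$-a.s.\ inequality to ``all $x^{t-1}$''. Without that assumption, code mass placed on a $P_0$-null symbol breaks the literal ``only if''.
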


\begin{proof}
\emph{(Necessity.)} The supermartingale condition requires $\E_{P_0}[E_t | \F_{t-1}] \le E_{t-1}$. Expanding:
\begin{align*}
\E_{P_0}[E_t | \F_{t-1}]
&= \sum_{x_t \in \X} p_0(x_t | X^{t-1}) \cdot \frac{\exp(-\ell(X^{t-1}, x_t))}{P_0(X^{t-1}) \cdot p_0(x_t | X^{t-1})} \\
&= \frac{1}{P_0(X^{t-1})} \sum_{x_t} \exp(-\ell(X^{t-1}, x_t)) \\
&= \frac{\exp(-\ell(X^{t-1}))}{P_0(X^{t-1})} \sum_{x_t} q_t(x_t | X^{t-1}),
\end{align*}
where $q_t(x_t | x^{t-1}) := \exp(-\ell(x^{t-1}, x_t) + \ell(x^{t-1}))$. Since $E_{t-1} = \exp(-\ell(X^{t-1}))/P_0(X^{t-1})$, the condition $\E_{P_0}[E_t | \F_{t-1}] \le E_{t-1}$ reduces to $\sum_{x_t} q_t(x_t | X^{t-1}) \le 1$. Moreover, $q_t$ must be $\F_{t-1}$-measurable (depend only on $X^{t-1}$) for the conditional expectation to be well-defined.

\emph{(Sufficiency.)} Suppose $q_t$ is an $\F_{t-1}$-measurable sub-probability kernel. Then $E_t = \prod_{s=1}^t q_s(X_s | X^{s-1}) / p_0(X_s | X^{s-1})$ is nonneg and adapted. The tower property of conditional expectation gives:
\[
\E_{P_0}[E_t | \F_{t-1}] = E_{t-1} \cdot \E_{P_0}\!\left[\frac{q_t(X_t | X^{t-1})}{p_0(X_t | X^{t-1})} \,\Big|\, \F_{t-1}\right] = E_{t-1} \sum_{x_t} q_t(x_t | X^{t-1}) \le E_{t-1},
\]
where the final inequality uses $\sum_{x_t} q_t(x_t | X^{t-1}) \le 1$. Since $E_0 = q_0/p_0 \le 1$ by the same condition at $t = 0$, the process $(E_t)$ is a nonneg supermartingale with $\E_{P_0}[E_0] \le 1$.
\end{proof}

The following example demonstrates the failure concretely.

\begin{example}[Bernoulli NML violates sequential liftability]
\label{ex:bernoulli_nml}
Let $\X = \{0,1\}$, $P_0 = \mathrm{Bern}(1/2)^{\otimes n}$, and consider the NML code for the Bernoulli model class $\{P_\theta : \theta \in [0,1]\}$.
The NML distribution at sample size $n$ is
\[
P_{\mathrm{NML}}^{(n)}(x^n) = \frac{\hat{\theta}^{k}(1-\hat{\theta})^{n-k}}{C_n}, \qquad C_n = \sum_{k=0}^n \binom{n}{k} \left(\frac{k}{n}\right)^k \left(\frac{n-k}{n}\right)^{n-k},
\]
where $k = \sum_i x_i$ and $\hat{\theta} = k/n$.
We compute the normalizing constants explicitly. At $n = 1$: the MLE for a single observation $x_1 \in \{0,1\}$ is $\hat\theta = x_1$, so
\[
C_1 = \sum_{k=0}^{1} \binom{1}{k} \hat\theta^k (1-\hat\theta)^{1-k}\big|_{\hat\theta=k/1} = 0^0 \cdot 1^1 + 1^1 \cdot 0^0 = 2,
\]
using the convention $0^0 = 1$. At $n = 2$: the possible counts are $k \in \{0,1,2\}$ with MLEs $\hat\theta \in \{0, 1/2, 1\}$:
\[
C_2 = \binom{2}{0}(0)^0(1)^2 + \binom{2}{1}\!\left(\tfrac{1}{2}\right)^{\!1}\!\left(\tfrac{1}{2}\right)^{\!1} + \binom{2}{2}(1)^2(0)^0 = 1 + \tfrac{1}{2} + 1 = \tfrac{5}{2}.
\]
At $n = 3$: $C_3 = 1 + 3 \cdot \tfrac{4}{27} + 3 \cdot \tfrac{4}{27} + 1 = \tfrac{26}{9} \approx 2.889$.

For a fixed horizon $N$, the conditional factor $q_2^{(N)}(x_2 | x_1) = P_{\mathrm{NML}}^{(N)}(x_1, x_2, \ldots) / P_{\mathrm{NML}}^{(N)}(x_1, \ldots)$ (marginalized over future coordinates) depends on $N$. Explicitly, for $x_1 = 0$:
\begin{align*}
\text{From } N = 2: \quad q_2^{(2)}(0 | 0) &= \frac{P_{\mathrm{NML}}^{(2)}(0,0)}{P_{\mathrm{NML}}^{(1)}(0)} = \frac{1/C_2}{1/C_1} = \frac{C_1}{C_2} = \frac{2}{5/2} = \frac{4}{5} = 0.800, \\
\text{From } N = 3: \quad q_2^{(3)}(0 | 0) &= \frac{\sum_{x_3} P_{\mathrm{NML}}^{(3)}(0,0,x_3)}{\sum_{x_2,x_3} P_{\mathrm{NML}}^{(3)}(0,x_2,x_3)} \approx 0.795.
\end{align*}
Since $q_2^{(2)}(0|0) = 0.800 \ne 0.795 \approx q_2^{(3)}(0|0)$, the conditional at step $t=2$ depends on the total horizon $N$, not just on the observed past $x_1$.
This horizon dependence violates $\F_1$-measurability: no single function $q_2(x_2 | x_1)$ of $x_1$ alone can simultaneously equal $q_2^{(N)}(x_2 | x_1)$ for all horizons $N$.
Table~\ref{tab:nml_horizon} shows the drift across horizons $N = 2, \ldots, 7$, computed by exhaustive enumeration.

\begin{table}[t]
\centering
\caption{NML horizon dependence for the Bernoulli model. The conditional factor $q_2^{(N)}(0 | 0)$ varies with the total horizon $N$, violating $\F_1$-measurability.}
\label{tab:nml_horizon}
\small
\begin{tabular}{@{}ccccccc@{}}
\toprule
$N$ & 2 & 3 & 4 & 5 & 6 & 7 \\
\midrule
$q_2^{(N)}(0 | 0)$ & $0.800$ & $0.795$ & $0.791$ & $0.789$ & $0.786$ & $0.785$ \\
\bottomrule
\end{tabular}
\end{table}

Consequently, there is no sequential factorization $P_{\mathrm{NML}}^{(N)}(x^N) = \prod_{t=1}^N q_t(x_t | x^{t-1})$ with $\F_{t-1}$-measurable factors, and the induced process $E_t = P_{\mathrm{NML}}^{(N)}(X^t)/P_0(X^t)$ is not a supermartingale under $P_0$.
\end{example}

\begin{corollary}[Complexity--validity tradeoff]
\label{cor:complexity_validity}
No static minimax-regret code admits universal sequential validity without sacrificing normalization optimality: if $\ell$ achieves the minimax individual-sequence regret $\min_\ell \max_{x^n} [\ell(x^n) + \log P_{\hat{\theta}}(x^n)]$, then the induced predictive factors generically violate the sub-probability condition of Theorem~\ref{thm:seq_liftability}.
\end{corollary}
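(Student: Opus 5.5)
The plan is to reduce the corollary to Theorem~\ref{thm:seq_liftability} and then run the Bernoulli computation of Example~\ref{ex:bernoulli_nml} in general form. First I identify the minimax-regret code. By Shtarkov's theorem, for fixed horizon $n$ the unique minimizer of $\max_{x^n}[\ell(x^n) + \log P_{\hat\theta}(x^n)]$ over Kraft-satisfying codes is $\ell_n = -\log P^{(n)}_{\mathrm{NML}}$, and the minimax value is $\log C_n$ with $C_n = \sum_{x^n} P_{\hat\theta(x^n)}(x^n)$. The equalizer property is what makes this unique: any code whose regret is below $\log C_n$ at some $x^n$ must exceed it elsewhere. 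A ``static'' minimax-regret code is therefore pinned down horizon by horizon as the family $\{P^{(n)}_{\mathrm{NML}}\}_{n}$.

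Next I test the criterion of Theorem~\ref{thm:seq_liftability}. A single code $\ell$ on $\bigcup_n \X^n$ that is minimax-regret at every horizon must satisfy $\ell(x^t) = -\log P^{(t)}_{\mathrm{NML}}(x^t)$ for all $t$. The induced factor is then
\[
q_t(x_t \mid x^{t-1}) = \frac{P^{(t)}_{\mathrm{NML}}(x^t)}{P^{(t-1)}_{\mathrm{NML}}(x^{t-1})} = \frac{C_{t-1}}{C_t}\cdot\frac{P_{\hat\theta(x^t)}(x^t)}{P_{\hat\theta(x^{t-1})}(x^{t-1})}.
\]
Summing over $x_t$ gives $\sum_{x_t} q_t = \frac{C_{t-1}}{C_t}\sum_{x_t} P_{\hat\theta(x^t)}(x^t)/P_{\hat\theta(x^{t-1})}(x^{t-1})$.

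The key step is to show this sum exceeds $1$ for some $x^{t-1}$. Because the data-fitted maximized likelihood satisfies $P_{\hat\theta(x^t)}(x^t) \ge P_{\hat\theta(x^{t-1})}(x^{t-1}, x_t)$, the sum is at least $C_{t-1}/C_t$ times a quantity $\ge 1$. Averaging over $x^{t-1}$ with weights $P_{\hat\theta}(x^{t-1})/C_{t-1}$ gives exactly $\sum_{x^{t-1}} P^{(t-1)}_{\mathrm{NML}}(x^{t-1}) \sum_{x_t} q_t = 1$. This identity is the telescoping of $C_t$. So the sub-probability condition holds for every $x^{t-1}$ only if equality holds everywhere, meaning the NML family is marginally consistent ($P^{(t-1)}_{\mathrm{NML}}$ equals the marginal of $P^{(t)}_{\mathrm{NML}}$). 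Example~\ref{ex:bernoulli_nml} and Table~\ref{tab:nml_horizon} exhibit a model where this consistency fails. For instance, at $t=2$ and $x_1=0$ we get $\sum_{x_2} q_2(x_2 \mid 0) = \frac{C_1}{C_2}(1 + \tfrac14) = 1$, but already at $x^2=(0,1)$ the sum is $\frac{C_2}{C_3}\cdot\frac{4/27 + 4/27}{1/4} = \frac{9}{26}\cdot\frac{32}{27}\cdot 4 > 1$. So the condition fails and the induced process is not a supermartingale.

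To get ``generically'' I plan to argue that marginal consistency of NML is a closed, nongeneric condition. It holds essentially only when $C_t/C_{t-1}$ is matched pointwise by the likelihood-gain ratios, and it is known to fail for all standard exponential families with nontrivial parametric complexity, where $\log C_n \sim \frac d2 \log n$ with non-constant increments. The alternative reading, a code with fixed horizon $N$ used at times $t<N$, is handled by the horizon-dependence argument already given. There the conditionals $q^{(N)}_t$ are not $\F_{t-1}$-measurable uniformly in $N$, so no $N$-free kernel exists.

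The main obstacle is making ``generically'' precise. A clean proof covers only a stated class, such as exponential families with $C_t/C_{t-1}$ nonconstant across branches. I would state it for the Bernoulli case rigorously via the explicit inequality above, and for the general case via the averaging identity plus exhibiting one strict pointwise inequality.
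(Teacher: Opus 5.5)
Your main argument is correct, and it is more careful than the paper's. The paper's proof is two lines: Shtarkov identifies NML as the minimax code, $C_n$ is increasing, and so ``$C_{t-1}/C_t<1$ \ldots\ causing the conditional factors to exceed unit mass,'' with the Bernoulli example as evidence. Read literally, that inference points the wrong way. We have $q_t=(C_{t-1}/C_t)\cdot P_{\hat\theta(x^t)}(x^t)/P_{\hat\theta(x^{t-1})}(x^{t-1})$, so the factor $C_{t-1}/C_t<1$ pulls $\sum_{x_t}q_t$ \emph{down}. An excess can only come from branches whose maximized-likelihood gain outruns $C_t/C_{t-1}$.

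Your identity $\sum_{x^{t-1}}P^{(t-1)}_{\mathrm{NML}}(x^{t-1})\sum_{x_t}q_t(x_t\mid x^{t-1})=1$ supplies exactly that missing link. Because the weights are strictly positive, the sub-probability condition at step $t$ is equivalent to marginal consistency of $P^{(t)}_{\mathrm{NML}}$ with $P^{(t-1)}_{\mathrm{NML}}$, and any inconsistency forces a strict violation on some branch. Combined with Shtarkov uniqueness, which pins $\ell$ at every horizon, and one explicit witness, this is an honest proof for the Bernoulli model. By contrast, the paper's numbers $0.800$ vs.\ $0.795$ are fixed-horizon marginal conditionals, each summing to exactly one. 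By themselves they exhibit horizon dependence, not a sub-probability violation.

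Three repairs are needed.

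(1) Your witness arithmetic is wrong, though the conclusion survives. $C_2/C_3=(5/2)/(26/9)=45/52$, not $9/26$, and the factor $4$ is already inside $\frac{8/27}{1/4}=\frac{32}{27}$. So $\sum_{x_3}q_3(x_3\mid 0,1)=\frac{45}{52}\cdot\frac{32}{27}=\frac{40}{39}>1$. This is balanced by $\sum_{x_3}q_3(x_3\mid 0,0)=\frac{155}{156}<1$. The four branches carry $P^{(2)}_{\mathrm{NML}}$-weights $\tfrac25,\tfrac1{10},\tfrac1{10},\tfrac25$, and indeed $\tfrac45\cdot\tfrac{155}{156}+\tfrac15\cdot\tfrac{40}{39}=1$. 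The margin is about $2.6\%$, not the $64\%$ your expression suggests.

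(2) Drop the claim that the fixed-horizon reading is ``handled.'' Suppose a horizon-$N$ NML code is extended to $t<N$ by marginalization. Then each $q^{(N)}_t(\cdot\mid x^{t-1})$ is a deterministic function of $x^{t-1}$, hence $\F_{t-1}$-measurable, and sums to exactly one. The induced process is therefore a $P_0$-martingale up to time $N$. Dependence on $N$ costs minimax optimality at intermediate horizons, not validity, so under that reading the corollary's conclusion is simply false. The statement should be restricted to a single code that is minimax at every horizon, which is what your main argument covers.

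(3) Your blanket claim that consistency fails for all standard exponential families is overstated. In the conditional-NML setting there are horizon-independent exceptions, e.g.\ the Gaussian location family, where conditional NML coincides with the flat-prior Bayes mixture. This is another reason ``generically'' can only be made rigorous for an explicitly stated class, as you propose.
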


\begin{proof}
The minimax regret is achieved by NML \citep{Shtarkov1987}, whose normalizing constant $C_n$ is strictly increasing in $n$. By Example~\ref{ex:bernoulli_nml}, $C_{t-1}/C_t < 1$ for adjacent steps, causing the conditional factors to exceed unit mass.
\end{proof}

\noindent\textit{ML interpretation.}
Corollary~\ref{cor:complexity_validity} elevates the NML obstruction from a specific counterexample to a structural principle: there is a fundamental tradeoff between compression optimality (minimizing worst-case regret) and sequential validity (maintaining the supermartingale property). Practitioners using MDL-based model selection in sequential pipelines face a choice: either accept suboptimal regret by using prequential codes, or sacrifice anytime validity by using static NML codes. The tradeoff is inherent to the representation--validity interface of the typed calculus (Figure~\ref{fig:typed_calculus}).

\subsection{Prequential Codes as Valid E-Processes}

\begin{proposition}[Prequential codes yield E-processes]
\label{prop:prequential_eprocess}
Let $q_t(\cdot | x^{t-1})$ be a predictive kernel with
$\sum_{x_t} q_t(x_t | x^{t-1}) = 1$ for all $t$ and all $x^{t-1}$.
Then $E_t := \prod_{s=1}^t q_s(X_s | X^{s-1}) / p_0(X_s | X^{s-1})$ is
a $P_0$-martingale and hence a valid E-process.
\end{proposition}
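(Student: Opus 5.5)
The plan is to verify the martingale property directly, one step at a time, and then read off the E-process properties from the definitions. We can reuse the sufficiency half of Theorem~\ref{thm:seq_liftability}, since a probability kernel is in particular a sub-probability kernel. The one change is that normalization to exactly $1$ upgrades the supermartingale inequality to an equality. As in that theorem, we work with a countable (discrete) alphabet $\X$; the general case replaces sums by integrals against a dominating measure.

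\textbf{Step 1: standing conventions.} Set $E_0 := 1$, the empty product, so that $\E_{P_0}[E_0] = 1$. Note that $E_t$ is nonnegative. It is also adapted, because each factor $q_s(X_s \mid X^{s-1})/p_0(X_s \mid X^{s-1})$ is a measurable function of $X^s$. To make the ratio well defined we use $P_0$-almost-sure positivity of $p_0(X_s \mid X^{s-1})$. Outcomes with $p_0 = 0$ have $P_0$-probability zero and can be ignored.

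\textbf{Step 2: the one-step identity.} Write $E_t = E_{t-1}\, r_t$ with $r_t := q_t(X_t \mid X^{t-1})/p_0(X_t \mid X^{t-1})$. Since $E_{t-1}$ is $\F_{t-1}$-measurable and nonnegative, we can pull it out of the conditional expectation, with no integrability issue:
\[
\E_{P_0}[E_t \mid \F_{t-1}] = E_{t-1}\sum_{x_t:\,p_0(x_t\mid X^{t-1})>0} q_t(x_t \mid X^{t-1}).
\]
This equals $E_{t-1}$ whenever $q_t(\cdot \mid x^{t-1}) \ll p_0(\cdot \mid x^{t-1})$. Without that absolute continuity the sum is at most $1$, and we get only a supermartingale.

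\textbf{Step 3: integrability and conclusion.} By induction and the tower property, $\E_{P_0}[E_t] = \E_{P_0}[E_{t-1}] = \cdots = 1$. So each $E_t$ is integrable, and $(E_t)$ is a nonnegative $P_0$-martingale with unit mean. In particular it satisfies Definition~\ref{def:eproc}, and Theorem~\ref{thm:ville} applies.

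\textbf{Main obstacle.} The only real subtlety is the support issue in Step~2. If $q_t$ puts mass on points where $p_0$ vanishes, that mass is invisible under $P_0$. In that case the process is only a supermartingale, which is still a valid E-process. I would state the martingale claim under the absolute-continuity hypothesis, as in Proposition~\ref{prop:lr_eproc}, and otherwise fall back on the sub-probability conclusion of Theorem~\ref{thm:seq_liftability}.
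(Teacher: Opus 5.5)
Your proof is correct and follows essentially the paper's route. The paper gives no separate proof and treats this as the equality case of the sufficiency direction of Theorem~\ref{thm:seq_liftability}, where $\E_{P_0}[E_t \mid \F_{t-1}] = E_{t-1}\sum_{x_t} q_t(x_t \mid X^{t-1})$, and your support caveat genuinely sharpens it: as written, the statement omits the hypothesis $q_t(\cdot \mid x^{t-1}) \ll p_0(\cdot \mid x^{t-1})$ (present in Proposition~\ref{prop:lr_eproc}), which the martingale conclusion needs, whereas the supermartingale conclusion, and hence the E-process claim, holds without it.
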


This includes prequential maximum-likelihood predictors
$q_t(x_t | x^{t-1}) := p(x_t | \hat{\theta}_{t-1})$, where
$\hat{\theta}_{t-1}$ is the MLE based on $x^{t-1}$.
The prequential principle \citep{Dawid1984} evaluates forecasters by
sequential predictive performance, inherently maintaining the supermartingale
structure needed for anytime validity.

\begin{table}[t]
\centering
\caption{Prequential evaluation vs.\ MDL: convergence and divergence.}
\label{tab:prequential}
\small
\begin{tabular}{@{}lcc@{}}
\toprule
& \textbf{Prequential} & \textbf{MDL/NML} \\
\midrule
Objective & Sequential calibration & Shortest description \\
Code & $\prod q_t(x_t | x^{t-1})$ & $\exp(-\ell_{\mathrm{NML}})$ \\
Well-specified & Converges to truth & Selects true model \\
Misspecified & Best predictor in class & Best compressor \\
E-process? & Yes (by construction) & Generally no \\
\bottomrule
\end{tabular}
\end{table}

\section{Beyond Log-Loss: Scoring Rules and Multiplicative Evidence}
\label{sec:scoring_rules}

The canonicality theorem (Theorem~\ref{thm:canonical_lr}) establishes the likelihood ratio as optimal under log-loss.
A natural question is whether other proper scoring rules yield analogous multiplicative evidence structures.
We show they do not: log-loss is the unique proper scoring rule compatible with the supermartingale framework.

\begin{definition}[Proper scoring rule]
\label{def:proper_scoring}
A scoring rule $S: \mathcal{P} \times \X \to \R$ is \emph{proper} if $\E_{P}[S(P, X)] \le \E_{P}[S(Q, X)]$ for all $P, Q$, with equality iff $P = Q$. It is \emph{strictly proper} if equality implies $P = Q$.
\end{definition}

Every strictly proper scoring rule induces a Bregman divergence $d_\phi(P, Q)$ via its associated convex function $\phi$ \citep{GneitingRaftery2007}.
Log-loss corresponds to $\phi(p) = -\sum p_i \log p_i$ (negative entropy) and $d_\phi = D_{\KL}$.

\begin{proposition}[Log-loss uniqueness for multiplicative evidence]
\label{prop:log_uniqueness_scoring}
Among strictly proper scoring rules, log-loss is the unique rule whose induced evidence process
\[
E_t^S := \prod_{s=1}^t \frac{\exp(-S(P_1, X_s))}{\exp(-S(P_0, X_s))}
\]
satisfies $\E_{P_0}[E_t^S] = 1$ for all $t$ and all $P_1$ (i.e., is a $P_0$-\emph{martingale}).
For any other strictly proper scoring rule, $\E_{P_0}[E_1^S] < 1$ whenever $P_1 \ne P_0$, so the induced process is a strict supermartingale that decays exponentially: $\E_{P_0}[E_n^S] = (\E_{P_0}[E_1^S])^n \to 0$.
Such a process is technically a valid E-process but is not calibrated as evidence in the likelihood-ratio sense: it is not representation-aligned, and the exponential decay under $P_0$ renders it practically uninformative as a test statistic.
\end{proposition}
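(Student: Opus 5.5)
The plan is to reduce the statement to a one-step identity and then solve a functional equation on the probability simplex. I will work on a finite alphabet $\X=\{1,\dots,k\}$ and with $P_0,P_1$ in the interior of the simplex, so that all ratios are finite. On the boundary, log-loss itself fails the identity: with $P_0=\delta_x$ the sum becomes $q(x)\neq 1$. I will also identify two scoring rules as the same rule when they differ by an additive function $c(x)$, since $c(x)$ cancels in $E^S_t$.

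The increments of $E^S_t$ are i.i.d. under $P_0$, so by the tower property (as in Proposition~\ref{prop:lr_eproc}) $E^S$ is a $P_0$-martingale for every $P_1$ if and only if $\E_{P_0}[E^S_1]=1$ for all $P_0,P_1$. Writing $a_P(x):=p(x)e^{S(P,x)}$ and $w_Q(x):=e^{-S(Q,x)}$, this condition reads
\[
\langle a_P, w_Q\rangle = 1 \quad \text{for all } P,Q .
\]
The case $Q=P$ holds automatically, because $\langle a_P,w_P\rangle=\sum_x p(x)=1$. Sufficiency is then immediate: for log-loss $a_P\equiv \1$ and $w_Q=q$, so $\langle a_P,w_Q\rangle=\sum_x q(x)=1$.

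For necessity I will use a dual-dimension argument.
\begin{enumerate}
\item Strict propriety makes $Q\mapsto S(Q,\cdot)$ injective, so the family $\{w_Q\}$ is not confined to a point. Using the Savage representation $S(Q,x)=-G(Q)-\langle G'(Q),\delta_x-Q\rangle$ of \citet{GneitingRaftery2007} with strictly convex $G$, I will argue that $\{w_Q\}$ has affine hull of dimension $k-1$.
\item The identity forces every $a_P$ into the one-dimensional annihilator of $\{w_Q-w_{Q'}\}$. Hence $a_P=\lambda(P)\,c$ for a fixed vector $c$.
\item Absorbing $\lambda$, the normalization $\langle a_P,w_P\rangle=1$ pins $a_P$ down to a fixed vector, i.e.\ $p(x)e^{S(P,x)}=c(x)$.
\item This gives $S(P,x)=-\log p(x)+\log c(x)$, which is log-loss up to an $x$-dependent shift.
\end{enumerate}
As a consistency check, differentiating $\langle a_P,w_Q\rangle$ at $Q=P$ reproduces exactly the first-order propriety condition.

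For the second claim, the exponential decay $\E_{P_0}[E^S_n]=(\E_{P_0}[E^S_1])^n$ follows from independence once $\E_{P_0}[E^S_1]<1$ is known.

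The main obstacle is the strict inequality $\E_{P_0}[E_1^S]<1$ for every non-log rule. The uniqueness argument above only shows that the expectation is $\neq 1$. Moreover, the scaled log score $S=-\alpha\log p$ gives $\E_{P_0}[(p_1/p_0)^\alpha]$, which is below $1$ for $\alpha<1$ but above $1$ for $\alpha>1$. The first thing I will try is to restrict the claim to rules normalized so that their entropy $G$ dominates, or is dominated by, Shannon entropy in the sense needed, and then apply a Jensen/H\"older argument. If that fails, I will downgrade the conclusion to ``not a martingale,'' noting explicitly that for $\alpha>1$ the induced process is not even an E-process.
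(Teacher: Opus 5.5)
Your route is different from the paper's. The paper argues locally. Via the Savage representation it perturbs $P_1=P_0+\epsilon h$ and claims the second-order term forces $\nabla^2\phi(P_0)$ to equal the Fisher metric $\operatorname{diag}(1/P_0)$; it then integrates to Shannon entropy plus an affine term. You argue globally through the bilinear identity $\langle a_P,w_Q\rangle=1$. Your preliminaries are correct: the one-step reduction, sufficiency, the restriction to the interior, and the quotient by shifts $c(x)$. That quotient is exactly the ``$+$ affine'' ambiguity in the paper's conclusion, so ``unique'' can only mean unique up to such shifts.

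\textbf{The gap is step~1.} It carries all of necessity and is only announced. Injectivity of $Q\mapsto S(Q,\cdot)$ and strict convexity of $G$ do not give the affine dimension of $\{w_Q\}$. Componentwise $\exp$ does not preserve affine dimension. A strictly convex $G$ need not be differentiable, so you cannot even say the image is a $(k-1)$-manifold.

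\textbf{Fix: let the identity do the work, which makes the dimension count unnecessary.} Fix one interior null $P_0$ and set $\rho(Q):=a_{P_0}\odot w_Q$. The identity says $\rho(Q)$ is a strictly positive probability vector. Hence $S(Q,x)=\log a_{P_0}(x)-\log\rho(Q)(x)$ and
\[
\E_R[S(Q,X)]-\E_R[S(R,X)] = D_{\KL}(R\,\|\,\rho(Q)) - D_{\KL}(R\,\|\,\rho(R)).
\]
Take $R:=\rho(P)$ and $Q:=P$. The right side becomes $-D_{\KL}(R\,\|\,\rho(R))\le 0$, so strict propriety forces $\rho(P)=P$. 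That is, $S(Q,x)=-\log q(x)+\log a_{P_0}(x)$.

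If you prefer to keep your steps 2--4, you can instead prove step~1 directly. Suppose $\langle v,w_Q\rangle=0$ for all $Q$. Then $p':=p+\epsilon\, v\odot w_P$ is a probability vector with $\E_{P'}[w_{P'}/w_P]=1$. Jensen plus strict propriety give $P'=P$, so $v=0$ and $\{w_Q\}$ spans $\R^k$.

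Completed either way, your argument is stronger than the paper's. It needs no smoothness of $G$, a single null suffices, and the shift $c=a_{P_0}$ comes out explicitly. The paper's expansion presupposes a twice-differentiable $\phi$.

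You are also right that the second sentence of the statement is false, and the paper's proof does nothing to support it.
\begin{itemize}
\item $-\alpha\log q$ with $\alpha>1$ is strictly proper, yet $\E_{P_0}[E^S_1]=\sum_x p_0(x)^{1-\alpha}p_1(x)^{\alpha}>1$.
\item Twice the Brier score in Remark~\ref{rem:brier} is strictly proper, and its one-step mean is $\tfrac12 e^{-5/4}+\tfrac12 e^{3/4}\approx 1.20$.
\end{itemize}
Worse, the paper runs its expansion on the inequality $\le 1$ and concludes log-loss, which its own Brier computation contradicts. Done correctly, the coefficient of $\epsilon^2/2$ is
\[
(Hh)^\top\Sigma_{P_0}(Hh)-h^\top Hh,\qquad H=\nabla^2G(P_0),\quad \Sigma_{P_0}=\operatorname{diag}(P_0)-P_0P_0^\top .
\]
The martingale equality is what pins $H$ to $\operatorname{diag}(1/P_0)$ on $\{h:\sum_i h_i=0\}$. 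The supermartingale inequality only yields $H\preceq\operatorname{diag}(1/P_0)$ there.

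That Hessian domination is the precise form of the restriction you were looking for; it holds for Brier and for $\alpha\le 1$. It is only a local criterion, though. Your fallback is the correct unconditional conclusion: ``not a martingale, and for $\alpha>1$ not even an E-process.''
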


\begin{proof}
By the Savage representation \citep{GneitingRaftery2007}, every strictly proper scoring rule takes the form $S(Q, x) = \phi(Q) - \nabla\phi(Q) \cdot (\delta_x - Q)$ for a strictly convex function $\phi$ on the probability simplex, and the induced divergence is the Bregman divergence $d_\phi(P, Q) = \phi(P) - \phi(Q) - \nabla\phi(Q) \cdot (P - Q)$.

For $(E_t^S)$ to be a $P_0$-supermartingale, the one-step factor must satisfy
$\E_{P_0}[\exp(-S(P_1, X) + S(P_0, X))] \le 1$ for all $P_1$.
At $P_1 = P_0$, this holds with equality. We show the condition forces $S$ to be the log-score. The score difference is
\[
S(P_0, x) - S(P_1, x) = \nabla\phi(P_1) \cdot \delta_x - \nabla\phi(P_0) \cdot \delta_x + [\phi(P_0) - \phi(P_1) + \nabla\phi(P_1) \cdot P_1 - \nabla\phi(P_0) \cdot P_0].
\]
The bracketed term depends on $P_0, P_1$ but not on $x$; call it $c(P_0, P_1)$. The supermartingale condition becomes
\[
e^{c(P_0, P_1)} \cdot \E_{P_0}\!\left[\exp\!\left((\nabla\phi(P_1) - \nabla\phi(P_0)) \cdot \delta_X\right)\right] \le 1.
\]
For this to hold for \emph{all} $P_1$ in a neighborhood of $P_0$, perturbing $P_1 = P_0 + \epsilon h$ and expanding to second order in $\epsilon$ requires the Hessian $\nabla^2\phi$ to satisfy $(\nabla^2\phi)_{ij} = 1/P_0(\{i\}) \cdot \delta_{ij}$ (the Fisher information metric). Integrating, $\phi(P) = -\sum_i P(\{i\}) \log P(\{i\}) + \text{affine}$, that is, negative entropy. Hence $S$ is the log-score.
\end{proof}

\begin{remark}[Brier score: explicit decay computation]
\label{rem:brier}
Under the Brier score $S_B(Q,x) = \sum_{j \in \X}(Q(\{j\}) - \1\{x = j\})^2$, take $\X = \{0,1\}$, $P_0 = \mathrm{Bern}(1/2)$, $P_1 = \mathrm{Bern}(3/4)$. Direct computation gives $S_B(1/2, 0) = S_B(1/2, 1) = 1/2$ and $S_B(3/4, 0) = 9/8$, $S_B(3/4, 1) = 1/8$. The one-step expectation under $P_0$ is
\[
\E_{P_0}\!\left[e^{S_B(P_0,X) - S_B(P_1,X)}\right]
= \tfrac{1}{2}e^{1/2 - 9/8} + \tfrac{1}{2}e^{1/2 - 1/8}
= \tfrac{1}{2}e^{-5/8} + \tfrac{1}{2}e^{3/8}
\approx 0.995 < 1.
\]
The process is a strict supermartingale: $\E_{P_0}[E_n^{S_B}] \approx 0.995^n \to 0$. After $n = 100$ observations, the expected value is approximately $0.61$. The Brier-induced process is not representation-aligned: it shrinks toward zero under $P_0$, making it practically uninformative as sequential evidence. By contrast, the log-loss evidence process maintains $\E_{P_0}[E_n] = 1$ for all $n$.
\end{remark}

\noindent\textit{ML interpretation.}
Many ML systems use proper scoring rules other than log-loss for model evaluation: the Brier score for probabilistic classification, CRPS for distributional forecasting, and energy scores for multivariate predictions. Proposition~\ref{prop:log_uniqueness_scoring} implies that none of these alternatives naturally yield multiplicative evidence compatible with the supermartingale framework. To obtain anytime-valid sequential evidence, practitioners must either use log-loss directly or convert other scores through a calibration step that recovers likelihood-ratio structure.

\section{Exchangeability and Conformal E-Prediction}
\label{sec:conformal}

\begin{definition}[Exchangeability]
\label{def:exch}
A random sequence $(X_1, X_2, \ldots)$ is \emph{exchangeable} if its joint
distribution is invariant under all finite permutations.
\end{definition}

\begin{theorem}[de Finetti's representation]
\label{thm:definetti}
An infinite exchangeable sequence $(X_n)_{n \ge 1}$ is conditionally i.i.d.:
there exists a random probability measure $\mu$ such that, conditional on
$\mu$, the $X_n$ are i.i.d.\ from $\mu$.
\end{theorem}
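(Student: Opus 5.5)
The plan is to prove de Finetti's theorem by a reverse-martingale construction of the directing measure~$\mu$, followed by a mixing argument. This fits the supermartingale viewpoint of Section~\ref{sec:composition}. Throughout I assume $\X$ is Polish (or standard Borel), so that regular conditional distributions exist; the statement implicitly needs such a condition.

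\emph{Construct $\mu$.} Let $\mathcal{G}_n$ be the $\sigma$-field of events in $\sigma(X_1,X_2,\ldots)$ that are invariant under permutations of the first $n$ coordinates. Then $\mathcal{G}_n \supseteq \mathcal{G}_{n+1}$, and we set $\mathcal{G}_\infty := \bigcap_n \mathcal{G}_n$, the exchangeable $\sigma$-field. Fix a bounded measurable $f$. Exchangeability gives
\[
\E[f(X_1)\mid\mathcal{G}_n] = \frac{1}{n}\sum_{i=1}^n f(X_i),
\]
since each $\E[f(X_i)\mid\mathcal{G}_n]$ equals the same quantity by symmetry and the empirical average is $\mathcal{G}_n$-measurable. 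The left side is a reverse martingale in~$n$. By the backward martingale convergence theorem (Doob's theorem for reverse martingales), it converges a.s. and in $L^1$ to $\E[f(X_1)\mid\mathcal{G}_\infty]$. Running $f$ over a countable convergence-determining class, we define $\mu$ as a regular version of the conditional law of $X_1$ given $\mathcal{G}_\infty$. This makes $\mu$ a random probability measure and also the a.s.\ weak limit of the empirical measures $L_n$.

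\emph{Conditional i.i.d.} Fix $k$ and bounded $f_1,\ldots,f_k$. The goal is
\[
\E\Big[\prod_{j=1}^k f_j(X_j)\,\Big|\,\mathcal{G}_\infty\Big] = \prod_{j=1}^k \int f_j\,d\mu.
\]
First, by exchangeability, $\E[\prod_j f_j(X_j)\mid\mathcal{G}_n]$ equals the U-statistic
\[
\frac{(n-k)!}{n!}\sum_{i_1,\ldots,i_k\ \text{distinct}} \prod_j f_j(X_{i_j}).
\]
Second, this U-statistic differs from the V-statistic $\prod_j \big(\tfrac1n\sum_i f_j(X_i)\big)$ by at most $O(k^2/n)\prod_j\|f_j\|_\infty$, because only an $O(k^2/n)$ fraction of index tuples have a coincidence. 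Third, letting $n\to\infty$, reverse martingale convergence on the left and the first step on the right give the displayed identity. Since products of bounded functions determine finite-dimensional laws, a monotone class argument shows that conditionally on $\mathcal{G}_\infty$ (equivalently, on $\mu$, because $\mu$ generates enough of $\mathcal{G}_\infty$ and the identity only involves $\mu$) the $X_j$ are i.i.d.\ with law~$\mu$.

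\emph{Main obstacle.} I expect the hardest step to be the measure-theoretic one: turning countably many a.s.\ limits into a single regular random measure $\mu$ and justifying conditioning on $\mu$ rather than on $\mathcal{G}_\infty$. My first approach is to use the Polish structure to take a countable dense class of bounded continuous $f$, define $\mu$ on a full-measure event, and extend it to all bounded measurable $f$ by monotone class. The combinatorial U/V comparison is routine.
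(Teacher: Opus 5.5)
The paper gives no proof of this theorem. It quotes de Finetti's representation as classical background (Appendix~\ref{app:landscape}) and uses it only to introduce the directing measure and the limit $L_n \to \mu$, so there is no argument in the paper to compare against.

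Your reverse-martingale proof is the standard modern one, and it is correct. Compared with de Finetti's moment-based argument for binary sequences, or the Hewitt--Savage extreme-point proof, it has one advantage here: it constructs $\mu$ explicitly as the almost-sure limit of the empirical measures, which is exactly the fact the paper's appendix asserts. Your U/V comparison is the sampling-without-versus-with-replacement estimate that underlies the Diaconis--Freedman finite de Finetti theorem.

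Your standard Borel hypothesis is genuinely necessary, not just convenient. Dubins and Freedman constructed exchangeable sequences on a non-standard measurable space that are not mixtures of i.i.d.\ laws, so the paper's unqualified statement should be read with that restriction.

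Two simplifications are available.
\begin{enumerate}
\item The obstacle you anticipate largely disappears. On a standard Borel space, a regular conditional distribution $\mu$ of $X_1$ given $\mathcal{G}_\infty$ exists outright, with $\int f\,d\mu = \E[f(X_1)\mid\mathcal{G}_\infty]$ a.s.\ for every bounded measurable $f$. Your third step involves only the finitely many $f_j$, and each is handled by its own reverse martingale. The countable convergence-determining class is needed only for the stronger statement that $L_n \to \mu$ weakly a.s., which the theorem does not require.
\item Passing from $\mathcal{G}_\infty$ to $\mu$ is just the tower property. Since $\sigma(\mu) \subseteq \mathcal{G}_\infty$ and $\prod_j \int f_j\,d\mu$ is $\sigma(\mu)$-measurable, conditioning your identity on $\sigma(\mu)$ returns the same right-hand side. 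The claim that $\mu$ ``generates enough of $\mathcal{G}_\infty$'' is not needed.
\end{enumerate}
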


\begin{proposition}[Conformal e-prediction validity]
\label{prop:conformal_e}
Under exchangeability of $(Z_1, \ldots, Z_n, (X, Y))$, a nonconformity
E-measure $f$ satisfying permutation equivariance yields
$\E[f(Z_1, \ldots, Z_n, X, Y)] \le 1$.
\end{proposition}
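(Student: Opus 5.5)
The plan is to follow Vovk's conformal e-prediction argument and to make precise what ``nonconformity E-measure'' means. Write $Z_{n+1} := (X, Y)$ and take $f$ in the standard normalized form
\[
f(z_1, \ldots, z_{n+1}) = \frac{A(z_{n+1}; \{\!\{z_1,\ldots,z_{n+1}\}\!\})}{\frac{1}{n+1}\sum_{i=1}^{n+1} A(z_i; \{\!\{z_1,\ldots,z_{n+1}\}\!\})},
\]
with $A \ge 0$ a nonconformity score that depends on the data only through the bag (multiset). I read ``permutation equivariance'' as the requirement that, for every $i$, $f(z_{\sigma(1)},\ldots,z_{\sigma(n+1)})$ evaluated at the last coordinate equals the score assigned to $z_i$ whenever $\sigma$ moves $i$ to position $n+1$. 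Equivalently, the vector $(f_1,\ldots,f_{n+1})$ of per-position e-values is equivariant and satisfies
\[
\sum_{i=1}^{n+1} f_i(z_1,\ldots,z_{n+1}) \le n+1 \quad \text{for every configuration}.
\]
The normalized form above satisfies this with equality, taking the convention $0/0 := 1$.

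The key steps, in order, are as follows.

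\begin{enumerate}
\item Let $\sigma$ be the transposition $(i\ n{+}1)$. Exchangeability of $(Z_1,\ldots,Z_{n+1})$ gives $(Z_{\sigma(1)},\ldots,Z_{\sigma(n+1)}) \stackrel{d}{=} (Z_1,\ldots,Z_{n+1})$. Combined with equivariance, this yields
\[
\E[f(Z_1,\ldots,Z_n,X,Y)] = \E[f_i(Z_1,\ldots,Z_{n+1})] \quad \text{for each } i.
\]
\item Average over $i$ and apply the pointwise bound $\sum_i f_i \le n+1$:
\[
\E[f] = \frac{1}{n+1}\E\Big[\sum_i f_i\Big] \le 1.
\]
\end{enumerate}

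Equivalently, one can condition on the bag $\{\!\{Z_1,\ldots,Z_{n+1}\}\!\}$. Under exchangeability, $Z_{n+1}$ is then uniformly distributed over the bag's elements, counted with multiplicity, so the conditional expectation of $f$ is at most the bag average of the scores, which is at most $1$. The tower property then finishes the argument. This is simply the Fubini/tower reasoning of Theorem~\ref{thm:canonical_lr} applied with the permutation group in place of the prior.

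The main obstacle is definitional rather than technical: the statement's hypotheses (``E-measure'', ``permutation equivariance'') must be pinned down so that they include the normalization $\sum_i f_i \le n+1$. Without that normalization the claim is false; for example, $f \equiv 2$ is trivially permutation equivariant. I will state explicitly that the normalization is part of what ``nonconformity E-measure'' means, and I will handle ties and zero denominators through the $0/0 := 1$ convention. A secondary care point is that conditioning on the bag must be made rigorous on a general measurable space. I would sidestep this by using only the transposition-symmetry argument in the first route, which needs nothing beyond equality of the joint laws.

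Finally, to connect the result to the anytime-valid setting, I would remark that Theorem~\ref{thm:markov_e} immediately converts the bound into the coverage statement
\[
\Prob\bigl(f \ge 1/\alpha\bigr) \le \alpha,
\]
so the prediction set $\{y : f(Z_1,\ldots,Z_n,X,y) < 1/\alpha\}$ covers $Y$ with probability at least $1-\alpha$.
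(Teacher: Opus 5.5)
Your proposal is correct and takes the same approach as the paper, whose proof says only that exchangeability makes all $(n+1)!$ orderings equally likely, so averaging $f$ over permutations gives $\E[f] \le 1$. Your transposition argument is the rigorous form of that one-line proof. You are also right to state explicitly the normalization $\sum_{i=1}^{n+1} f_i \le n+1$: the paper's statement and proof leave it implicit, but the result is false without it.
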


\begin{proof}
Under exchangeability, all $(n+1)!$ orderings are equally likely.
The constraint $\E[f] \le 1$ follows from averaging the nonconformity
function over permutations.
\end{proof}

\noindent\textit{ML interpretation.}
Conformal prediction provides distribution-free coverage guarantees under
exchangeability. Proposition~\ref{prop:conformal_e} shows that
E-value-based conformal methods inherit anytime-valid control: unlike
p-value-based conformal prediction (which requires correction for multiple
testing), conformal E-values can be combined across time via the evidence-class
algebra (Theorem~\ref{thm:class_algebra_informal}).
This is directly relevant to online prediction pipelines where prediction
sets must maintain coverage as new data arrives.

\section{Experiments}
\label{sec:experiments}

We validate the theoretical results with synthetic experiments
demonstrating the practical distinctions between validity-layer and
efficiency-layer evidence under sequential monitoring.
All experiments use simulated Bernoulli data to ensure controlled
comparison.

\subsection{Evidence Accumulation Under Optional Stopping}
\label{sec:exp_accumulation}

\noindent\textit{Setup.}
We compare three evidence measures for testing $H_0: p = 0.5$ vs.\
$H_1: p = 0.65$ using sequential Bernoulli observations:
\begin{enumerate}[label=(\roman*)]
\item \emph{Likelihood-ratio E-process:}
$E_t = \prod_{s=1}^t (0.65/0.5)^{X_s} (0.35/0.5)^{1-X_s}$.
\item \emph{Ville-threshold monitor:} reject at first $t$ with $E_t \ge 1/\alpha$,
using the same LR E-process but threshold selected by Markov bound.
\item \emph{ML-based ratio (improper):} form the ratio
$P_{\mathrm{ML}}(X^t) / P_0(X^t)$ using the maximum-likelihood fit
without the NML normalizer $C_t$, demonstrating the failure
of sequential validity.
\end{enumerate}

\noindent\textit{Results.}
Figure~\ref{fig:accumulation} shows 500 sample paths under $H_1$
(data from Bern(0.65)) with maximum sample size $T = 200$.
The LR E-process grows at rate $D_{\KL}(0.65 \| 0.5) \approx 0.046$ nats
per observation.
The Ville threshold at $b = 20$ ($\alpha = 0.05$) is crossed by 97\% of
paths by $T = 200$, with a median stopping time of approximately 50
observations.
The ML-based ratio initially tracks the LR process but accumulates an
upward bias from the parametric complexity term $\frac{1}{2}\log t$,
which is not a supermartingale correction.

\subsection{Type~I Error Under Optional Stopping}
\label{sec:exp_type1}

\noindent\textit{Setup.}
Under $H_0$ ($p = 0.5$), we apply aggressive optional stopping:
monitor continuously and stop at the first time $E_t \ge 20$, or at
$T = 500$ if the threshold is never crossed. We repeat 10,000 times.

\noindent\textit{Results.}
The LR E-process yields an empirical false-rejection rate of 4.2\%
($\pm 0.4\%$), consistent with the theoretical bound $1/20 = 5\%$.
The ML-based ratio yields a false-rejection rate of 22.5\%
($\pm 0.8\%$), more than four times the nominal level, confirming that the
supermartingale property is violated and the $1/c$ bound does not hold.

\subsection{Misspecification Sensitivity}
\label{sec:exp_misspec}

\noindent\textit{Setup.}
Data are generated from $P_{\mathrm{true}} = \mathrm{Bern}(0.55)$,
while the alternative model uses $P_1 = \mathrm{Bern}(0.80)$.
The expected growth rate is
$D_{\KL}(0.55 \| 0.50) - D_{\KL}(0.55 \| 0.80) \approx -0.154$ nats
per observation (negative: evidence drifts toward $H_0$).

\noindent\textit{Results.}
Figure~\ref{fig:accumulation}(b) confirms that the LR E-process drifts downward,
never crossing the rejection threshold in any of 500 paths over
$T = 300$ observations.
This illustrates the risk of misspecification in online monitoring:
a badly chosen alternative can render the evidence measure powerless
despite the null being false.
Robust constructions via mixture E-processes
(Theorem~\ref{thm:class_algebra_informal}(b)) mitigate this by
integrating over a range of alternatives.

\begin{figure}[t]
\centering
\includegraphics[width=\textwidth]{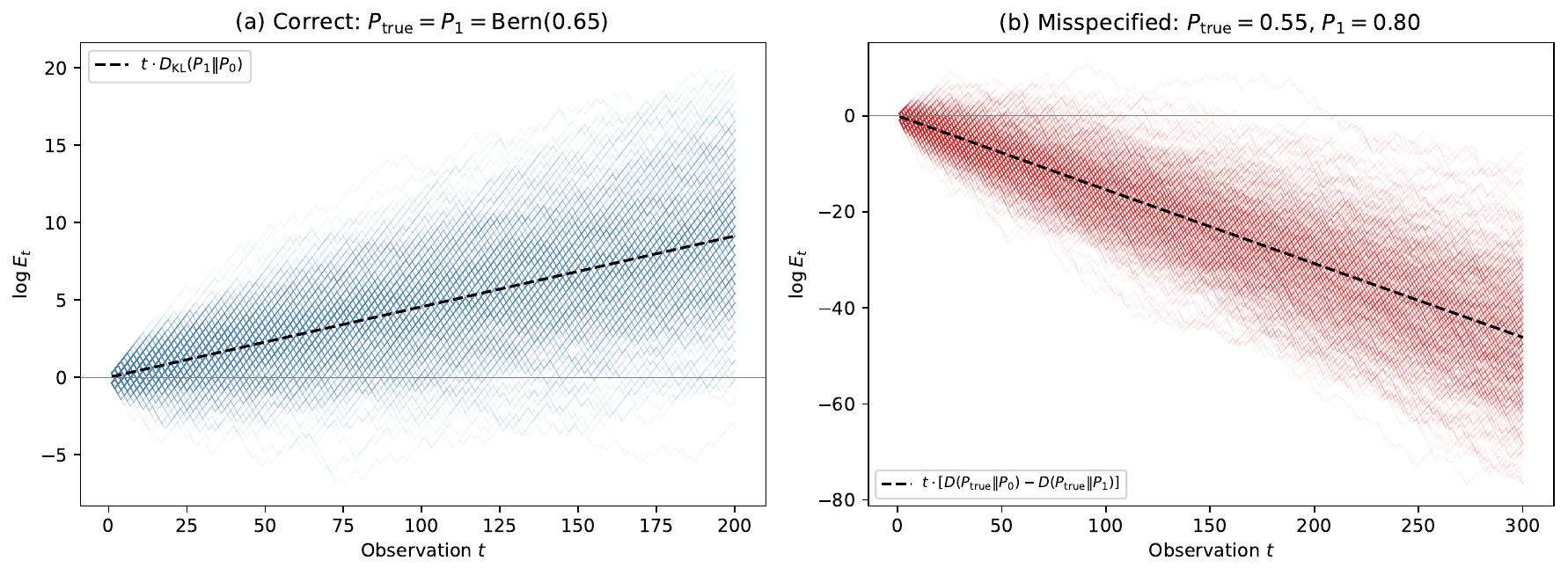}
\caption{Simulated paths of $\log E_t$ for the Bernoulli LR E-process ($H_0: p=0.5$).
(a)~Correct specification ($P_1 = \mathrm{Bern}(0.65)$, data from $P_1$):
paths cluster around the KL slope $t \cdot D_{\KL}(P_1 \| P_0) \approx 0.046t$.
(b)~Misspecification ($P_1 = \mathrm{Bern}(0.80)$, data from $\mathrm{Bern}(0.55)$):
the net growth rate is negative ($\approx -0.154$ nats/obs), and evidence drifts
toward $H_0$ despite the null being false.}
\label{fig:accumulation}
\end{figure}

\begin{figure}[t]
\centering
\includegraphics[width=\textwidth]{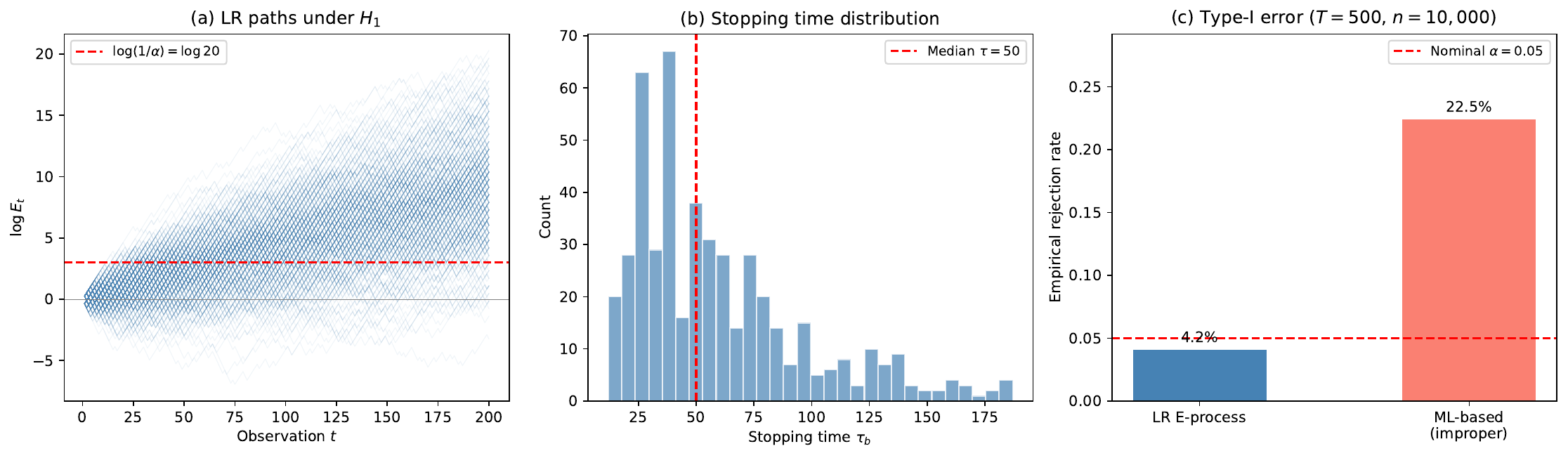}
\caption{Sequential evidence comparison.
(a)~LR E-process paths under $H_1$ with rejection threshold $\log(1/\alpha)$ (red).
(b)~Distribution of stopping times; median $\tau \approx 50$ observations.
(c)~Type-I error under aggressive optional stopping ($T=500$):
the LR E-process maintains nominal control ($4.2\%$ vs.\ $5\%$),
while the ML-based ratio inflates to $22.5\%$, confirming the
supermartingale violation predicted by Proposition~\ref{prop:comp_boundary}.}
\label{fig:experiments}
\end{figure}

\section{Discussion}
\label{sec:discussion}

We have introduced a typed framework that separates sequential evidence
into three layers (representation, validity, and decision) and
established concrete results at each layer.
We conclude with implications for machine learning practice.

\noindent\textit{Online model validation.}
The canonicality theorem (Theorem~\ref{thm:canonical_lr}) implies that
for online evaluation of predictive models under log-loss, the
likelihood-ratio E-process is the unique Bayes-risk-optimal evidence
measure within the coherent predictive subclass.
Practitioners monitoring deployed classifiers should use LR-based
evidence rather than generic Markov-calibrated E-values when
log-loss structure is available, as the moderate-deviation stopping
theorem (Theorem~\ref{thm:moderate_deviation}) shows the detection-time
advantage can be substantial.

\noindent\textit{Adaptive experimentation.}
The evidence-class algebra (Theorem~\ref{thm:class_algebra_informal})
and stitching (Proposition~\ref{prop:stitching}) provide compositional
guarantees for multi-phase adaptive experiments.
Evidence from an exploratory phase and a confirmatory phase can be
combined while maintaining anytime validity, without
$\alpha$-spending adjustments.

\noindent\textit{Conformal prediction.}
The connection between exchangeability testing and E-values
(Proposition~\ref{prop:conformal_e}) suggests a path toward
anytime-valid conformal prediction: E-value-based prediction sets
can be updated sequentially without the coverage degradation that
affects p-value-based methods under optional stopping.

\noindent\textit{Code-based inference.}
The computational obstruction (Proposition~\ref{prop:comp_boundary})
has immediate practical implications: MDL/NML-based model selection
criteria should not be used directly as sequential evidence measures.
Prequential predictors (Proposition~\ref{prop:prequential_eprocess})
provide a valid alternative that maintains the supermartingale structure
while retaining the predictive motivation of the MDL programme.

\noindent\textit{Connection to PAC-Bayes.}
The E-process framework connects naturally to PAC-Bayes generalization bounds.
\citet{ChuggWangRamdas2023} develop a unified recipe for time-uniform PAC-Bayes
bounds using the same supermartingale + Ville + Donsker-Varadhan pipeline;
\citet{RodriguezGalvezEtAl2024} extend PAC-Bayes to anytime validity for
losses with general tail behaviors.
The following proposition shows the bridge explicitly.
\begin{proposition}[PAC-Bayes via E-processes]
\label{prop:pac_bayes_bridge}
Let $\pi$ be a prior over a hypothesis class $\Theta$ and $\hat{\rho}_n$ a data-dependent posterior. Define the mixture E-process
\[
E_n^{\pi} := \int_\Theta \prod_{t=1}^n \frac{p_\theta(X_t | X^{t-1})}{p_0(X_t | X^{t-1})} \, \pi(d\theta).
\]
Then $E_n^{\pi}$ is a valid E-process under $P_0$ (by Theorem~\ref{thm:class_algebra_informal}(b)), and for any data-dependent posterior $\hat{\rho}_n$,
\begin{multline*}
\E_{P_0}\!\biggl[\exp\!\biggl(\int_\Theta \log \frac{p_\theta(X^n)}{p_0(X^n)} \, \hat{\rho}_n(d\theta) \\
 - D_{\KL}(\hat{\rho}_n \| \pi)\biggr)\biggr] \le 1.
\end{multline*}
\end{proposition}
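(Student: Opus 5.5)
The argument has two parts: validity of the mixture E-process, and a Donsker--Varadhan change of measure applied to it.

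\emph{Step 1: validity of $E_n^{\pi}$.} For each fixed $\theta$, the product $E_n^\theta := \prod_{t=1}^n p_\theta(X_t\mid X^{t-1})/p_0(X_t\mid X^{t-1})$ is a nonnegative $P_0$-martingale with $E_0^\theta = 1$, by Proposition~\ref{prop:lr_eproc} with $q = p_\theta$. We need joint measurability of $(\theta, x^n) \mapsto p_\theta(x^n)$ so that Fubini applies. Granting this, Theorem~\ref{thm:class_algebra_informal}(b) shows that $E_n^\pi = \int E_n^\theta\,\pi(d\theta)$ is in $\mathcal{E}^{\mathrm{proc}}(H_0)$. Fubini also gives $\E_{P_0}[E_n^\pi] = \int \E_{P_0}[E_n^\theta]\,\pi(d\theta) = 1$.

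\emph{Step 2: the change-of-measure inequality.} Fix a realization $x^n$ and write $f(\theta) := \log\big(p_\theta(x^n)/p_0(x^n)\big)$, so that $E_n^\pi = \int e^{f}\,d\pi$. The Donsker--Varadhan (Gibbs) variational formula states that for every probability measure $\rho \ll \pi$,
\[
\int f\,d\rho - D_{\KL}(\rho\,\|\,\pi) \le \log \int e^{f}\,d\pi .
\]
The proof is one application of Jensen's inequality. Let $g$ be the Gibbs density $d\pi_f/d\pi = e^{f}/\int e^f d\pi$. Then
\[
\log\int e^f d\pi - \int f\,d\rho + D_{\KL}(\rho\|\pi) = D_{\KL}(\rho\|\pi_f) \ge 0 .
\]
If $\rho \not\ll \pi$, then $D_{\KL} = \infty$, the exponent equals $-\infty$, and the inequality holds trivially.

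Apply this pointwise in $\omega$ with $\rho = \hat\rho_n(\omega)$. Because the bound holds for \emph{every} $\rho$ simultaneously, it holds for any data-dependent choice. Exponentiating gives
\[
\exp\!\Big(\int \log\tfrac{p_\theta(X^n)}{p_0(X^n)}\,\hat\rho_n(d\theta) - D_{\KL}(\hat\rho_n\|\pi)\Big) \le E_n^\pi \quad P_0\text{-a.s.}
\]
Taking $\E_{P_0}$ and using Step~1 yields the claimed bound of $1$.

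\emph{Main obstacle.} The delicate part is the measure-theoretic bookkeeping, not the inequality itself.

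\begin{itemize}
\item \textbf{Measurability of the left-hand side.} $\hat\rho_n$ must be a probability kernel from $\X^n$ to $\Theta$, so that the left-hand side is a random variable.
\item \textbf{Well-definedness of $\int f\,d\hat\rho_n$.} This integral may take the value $-\infty$, or be undefined where $p_\theta$ vanishes. I would handle this by adopting the convention that the integrand is $+\infty$ off the support; alternatively, one can restrict to $\rho$ with $\int f^+ d\rho < \infty$ and use the convention $e^{-\infty} = 0$.
\item \textbf{Fubini in Step~1.} This needs joint measurability of $(\theta, x) \mapsto p_\theta(x\mid x^{t-1})$, which I would state as a standing assumption.
\end{itemize}

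As a corollary, combining the display with Ville's inequality (Theorem~\ref{thm:ville}) applied to $E_n^\pi$ would make the bound time-uniform.
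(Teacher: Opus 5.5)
Your proposal is correct and follows the paper's proof: you get $\E_{P_0}[E_n^\pi]\le 1$ from Theorem~\ref{thm:class_algebra_informal}(b), apply the Donsker--Varadhan inequality pointwise with $\hat\rho_n$, exponentiate, and take $P_0$-expectations, adding a Gibbs-measure derivation of Donsker--Varadhan and measurability bookkeeping that the paper omits. One slip: where $p_\theta(X^n)=0$ the convention must be $\log(p_\theta/p_0)=-\infty$, so such $\theta$ only lower the left-hand side; setting the integrand to $+\infty$ there would break the inequality, and your alternative convention with $e^{-\infty}=0$ is the correct one.
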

\begin{proof}
The first claim follows from Theorem~\ref{thm:class_algebra_informal}(b).
For the second, the Donsker--Varadhan variational formula gives, for any data-dependent posterior $\hat{\rho}_n$,
\[
\log E_n^\pi = \log \int \frac{p_\theta(X^n)}{p_0(X^n)} \,\pi(d\theta)
\;\ge\;
\int \log \frac{p_\theta(X^n)}{p_0(X^n)}\,\hat{\rho}_n(d\theta) - D_{\KL}(\hat{\rho}_n \| \pi).
\]
Exponentiating both sides yields
$E_n^\pi \ge \exp\!\bigl(\int \log(p_\theta/p_0)(X^n)\,\hat{\rho}_n(d\theta) - D_{\KL}(\hat{\rho}_n\|\pi)\bigr)$.
Taking expectations under $P_0$ and using $\E_{P_0}[E_n^\pi] \le 1$ (first claim) gives
\begin{multline*}
1 \;\ge\; \E_{P_0}[E_n^\pi] \\
\ge\;
\E_{P_0}\!\biggl[\exp\!\biggl(\int \log\frac{p_\theta(X^n)}{p_0(X^n)}\,\hat{\rho}_n(d\theta)
- D_{\KL}(\hat{\rho}_n\|\pi)\biggr)\biggr],
\end{multline*}
which is the stated bound.
\end{proof}
This recovers the Catoni--Audibert PAC-Bayes inequality as a special case of the evidence-class algebra, with the KL regularization term $D_{\KL}(\hat{\rho}_n \| \pi)$ arising naturally from the Bayesian mixture structure. The typed framework clarifies that PAC-Bayes bounds live at the validity layer: they are consequences of the supermartingale property of mixture E-processes, not of any representation-layer optimality.

\noindent\textit{Application schematic: online calibration monitoring.}
We illustrate the typed framework with a concrete deployment scenario.
Consider a binary classifier deployed in production, with calibrated probabilities $P_0(Y = 1 | X) = \hat{p}(X)$ under the null hypothesis ``the classifier remains calibrated.''
The three layers instantiate as follows.

\emph{Representation layer.}
The predictive density under $H_0$ is $P_0(Y_t | X_t) = \hat{p}(X_t)^{Y_t}(1 - \hat{p}(X_t))^{1-Y_t}$.
An alternative $P_1$ posits systematic miscalibration, e.g., $P_1(Y_t | X_t)$ with shifted probabilities.
The likelihood ratio $\Lambda_t = \prod_{s=1}^t P_1(Y_s | X_s)/P_0(Y_s | X_s)$ is the canonical evidence (Theorem~\ref{thm:canonical_lr}).

\emph{Validity layer.}
$(\Lambda_t)$ is a $P_0$-martingale, so Ville's inequality (Theorem~\ref{thm:ville}) guarantees $\Prob_{P_0}(\sup_t \Lambda_t \ge b) \le 1/b$. The monitoring system can check the classifier at any time (after each prediction, at the end of each shift, or when triggered by an external event) without inflating the false alarm rate.

\emph{Decision layer.}
The alert threshold $b$ is chosen via the sample-complexity formula (Corollary~\ref{cor:sample_complexity}): for a target false alarm rate $\alpha = 0.01$ and expected KL divergence $\mu = 0.05$ nats/obs under the alternative, the expected detection time is $\log(100)/0.05 \approx 92$ observations.
If the classifier drifts in an unanticipated direction (misspecification), Proposition~\ref{prop:misspec_divergence} predicts that evidence will stall, alerting the practitioner to reassess the alternative model.

This schematic demonstrates that the typed calculus is not merely a mathematical convenience but a deployment architecture: each layer corresponds to a distinct engineering decision (model specification, validity certification, alert threshold), and the separation guarantees that changes at one layer do not invalidate guarantees at another.

\noindent\textit{Limitations and future work.}
Our canonicality result applies within the coherent predictive/log-loss
subclass; outside this subclass, the class of valid E-processes is strictly
broader and need not admit LR representations.
The experiments are synthetic; application to real-world online
monitoring pipelines (clinical trials, recommendation systems,
autonomous driving validation) is an important direction.
Extending the moderate-deviation stopping theorem
(Theorem~\ref{thm:moderate_deviation}) beyond the i.i.d.\ Cram\'er setting
(Assumption~\ref{asm:iid_increments}) to martingale dependence and mixing
conditions would broaden applicability.


\noindent\textit{Proof dependency map.}
The main results depend on each other as follows. Arrows indicate logical dependence; results at the same level are independent.
\begin{center}
\begin{tikzpicture}[
  node distance=0.7cm and 1.5cm,
  every node/.style={font=\small, align=center},
  box/.style={draw, rectangle, rounded corners=2pt, inner sep=4pt, minimum width=2.8cm},
  >=Stealth
]
\node[box] (canon) {Thm.~\ref{thm:canonical_lr}\\Canonicality};
\node[box, right=of canon] (algebra) {Thm.~\ref{thm:class_algebra_informal}\\Evidence algebra};
\node[box, below=of canon] (growth) {Prop.~\ref{prop:e_growth}\\KL growth rate};
\node[box, below=of algebra] (lift) {Thm.~\ref{thm:seq_liftability}\\Liftability};
\node[box, below=of growth] (mdp) {Thm.~\ref{thm:moderate_deviation}\\Moderate deviation};
\node[box, below=of lift] (comp) {Prop.~\ref{prop:comp_boundary}\\Code obstruction};
\node[box, below right=0.7cm and -0.5cm of mdp] (scoring) {Prop.~\ref{prop:log_uniqueness_scoring}\\Scoring uniqueness};
\node[box, left=of mdp] (pac) {Prop.~\ref{prop:pac_bayes_bridge}\\PAC-Bayes};

\draw[->] (canon) -- (growth);
\draw[->] (growth) -- (mdp);
\draw[->] (algebra) -- (pac);
\draw[->] (algebra) -- (lift);
\draw[->] (lift) -- (comp);
\draw[->] (canon) -- (scoring);
\end{tikzpicture}
\end{center}

\acks{We thank Aaditya Ramdas, Nick Koning, and Jayaram Sethuraman for comments on an earlier version of this paper.}

\newpage
\appendix

\section{The Unified Probabilistic Landscape}
\label{app:landscape}

The typed framework rests on connections between several classical structures
in probability, all describing the information in a single sample path about an
underlying directing measure $\mu$.

\noindent\textit{De Finetti and the directing measure.}
If $(X_n)$ is exchangeable, de Finetti's theorem guarantees a random probability
measure $\mu \sim \Pi$ such that, conditional on $\mu$, the observations are
i.i.d.\ from $\mu$ \citep{deFinetti1937}.
By the strong law, $L_n \to \mu$ almost surely: the empirical measure converges
to the single draw from $\Pi$ that generated the path.
The prior $\Pi$ is in principle unobservable from one sequence; only one
realization of $\mu$ is ever revealed.

\noindent\textit{Sanov's theorem and inverse Sanov.}
There is a KL duality between frequentist concentration and Bayesian updating
\citep{GaneshOConnell1999, PolsonZantedeschi2025DeFinettiSanov}.

\begin{theorem}[Sanov's theorem]
\label{thm:sanov}
For i.i.d.\ $X_1, \ldots, X_n \sim P$ on a finite alphabet $\X$, and any
closed set $\mathcal{Q}$ of distributions on $\X$:
$\Prob_P(\hat{P}_n \in \mathcal{Q}) \asymp \exp(-n \inf_{Q \in \mathcal{Q}} D_{\KL}(Q \| P))$,
where $\hat{P}_n$ is the empirical distribution.
\end{theorem}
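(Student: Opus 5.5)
The plan is the method of types, which works because $\X$ is finite. Write $\mathcal{T}_n$ for the set of $n$-types, i.e.\ the distributions on $\X$ with probabilities in $\{0,1/n,\ldots,1\}$. For $Q\in\mathcal{T}_n$ let $T(Q)=\{x^n : \hat P_n = Q\}$ be its type class. I will read ``$\asymp$'' as logarithmic equivalence, meaning $\frac1n\log\Prob_P(\hat P_n\in\mathcal{Q})\to-\inf_{Q\in\mathcal{Q}}D_{\KL}(Q\|P)$.

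\textbf{Counting estimates.} I would prove three facts first.
\begin{enumerate}
\item $|\mathcal{T}_n|\le (n+1)^{|\X|}$, since each coordinate of a type takes at most $n+1$ values.
\item For $x^n\in T(Q)$ we have the exact identity $P^{\otimes n}(x^n)=\exp(-n[H(Q)+D_{\KL}(Q\|P)])$, obtained by grouping the product by symbol counts.
\item $(n+1)^{-|\X|}e^{nH(Q)}\le |T(Q)|\le e^{nH(Q)}$. The upper bound comes from $1\ge Q^{\otimes n}(T(Q))=|T(Q)|e^{-nH(Q)}$. The lower bound comes from showing that $Q^{\otimes n}(T(Q'))$ is maximized over $Q'\in\mathcal{T}_n$ at $Q'=Q$, using a ratio-of-multinomials comparison with $m!/k!\ge k^{m-k}$, and then applying fact 1.
\end{enumerate}
Combining facts 2 and 3 gives the sandwich
\[
(n+1)^{-|\X|}e^{-nD_{\KL}(Q\|P)}\le \Prob_P(T(Q))\le e^{-nD_{\KL}(Q\|P)}.
\]

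\textbf{Upper bound.} Summing over types in $\mathcal{Q}$ and using fact 1,
\[
\Prob_P(\hat P_n\in\mathcal{Q})\le (n+1)^{|\X|}\exp\!\big(-n\inf_{Q\in\mathcal{Q}}D_{\KL}(Q\|P)\big).
\]
This holds for every $n$ and every set $\mathcal{Q}$, not just closed ones. Taking $\frac1n\log$ kills the polynomial factor and gives the $\limsup$ half.

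\textbf{Lower bound.} Here I expect the main obstacle, because closedness alone does not suffice. A set $\mathcal{Q}$ containing no $n$-types for infinitely many $n$, such as a single irrational point, has probability zero for those $n$. I would therefore add the standard regularity condition that the infimum over $\mathcal{Q}$ equals the infimum over its interior, for instance when $\mathcal{Q}$ is the closure of its interior. I would flag this in the statement.

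Under that condition, the argument runs as follows.
\begin{enumerate}
\item Fix $\epsilon>0$ and choose $Q^\ast$ in the interior with $D_{\KL}(Q^\ast\|P)<\inf_{\mathcal{Q}} D_{\KL}(\cdot\|P)+\epsilon$ and $\mathrm{supp}\,Q^\ast\subseteq\mathrm{supp}\,P$. If no such point exists, the infimum is $+\infty$ and there is nothing to prove.
\item Pick $Q_n\in\mathcal{T}_n$ with $\mathrm{supp}\,Q_n\subseteq\mathrm{supp}\,Q^\ast$ and $\|Q_n-Q^\ast\|_\infty\le|\X|/n$. Rounding the coordinates while preserving the zeros achieves this. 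For large $n$, $Q_n$ lies in the interior, hence in $\mathcal{Q}$.
\item $D_{\KL}(\cdot\|P)$ is continuous on distributions supported in $\mathrm{supp}\,P$, so $D_{\KL}(Q_n\|P)\to D_{\KL}(Q^\ast\|P)$.
\item The sandwich then gives $\Prob_P(\hat P_n\in\mathcal{Q})\ge\Prob_P(T(Q_n))\ge (n+1)^{-|\X|}e^{-nD_{\KL}(Q_n\|P)}$.
\end{enumerate}
Taking $\liminf\frac1n\log$ and letting $\epsilon\downarrow0$ gives the matching lower bound, which completes the logarithmic equivalence.
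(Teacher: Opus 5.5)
The paper does not prove this statement. It is quoted in Appendix~\ref{app:landscape} as classical background, so there is no argument to compare yours against.

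Your method-of-types proof is the standard one, and it is correct. Its three ingredients are the type-class sandwich, a polynomial union bound for the upper estimate, and approximation of an interior near-minimizer by $n$-types for the lower estimate. The support-preserving rounding in your lower bound is exactly what keeps $D_{\KL}(Q_n\|P)$ finite and convergent when $P$ lacks full support.

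You are also right that the statement as printed is false. Closedness gives only the upper half, which, as you note, holds for every set. Your singleton with an irrational coordinate shows that the two-sided $\asymp$ fails. The hypothesis you actually use, $\inf_{\mathrm{int}\,\mathcal{Q}} D_{\KL}(\cdot\|P) = \inf_{\mathcal{Q}} D_{\KL}(\cdot\|P)$ (interior relative to the simplex), is the right general one.

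One correction to your aside: ``$\mathcal{Q}$ is the closure of its interior'' implies that condition only when $P$ has full support, so that $D_{\KL}(\cdot\|P)$ is finite and continuous on the whole simplex. Otherwise it can fail. Take $\X=\{0,1,2\}$, $P=(\tfrac12,\tfrac12,0)$ and $\mathcal{Q}=\{Q : Q(2)\ge |Q(0)-a|\}$ with $a\in(0,1)$ irrational.
\begin{itemize}
\item This $\mathcal{Q}$ is closed and equals the closure of its interior.
\item Its only point of finite divergence is $(a,1-a,0)$, so the infimum is finite.
\item Yet $\hat P_n(2)=0$ and $\hat P_n(0)\ne a$ almost surely, so $\Prob_P(\hat P_n\in\mathcal{Q})=0$ for every $n$.
\end{itemize}
State the hypothesis as the infimum condition, or assume full support, and your proof goes through unchanged.
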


\begin{theorem}[Inverse Sanov]
\label{thm:inverse_sanov}
Under exchangeability, the posterior $\pi_n$ satisfies a large-deviation
principle on its support with rate function $D_{\KL}(\hat{P}_n \| \cdot)$,
the reverse-argument KL divergence.
\end{theorem}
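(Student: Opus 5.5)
The statement to prove is Theorem~\ref{thm:inverse_sanov}. I read it as follows: on a finite alphabet $\X$, the posterior $\pi_n(\cdot \mid X^n)$ obtained from a de~Finetti prior $\Pi$ on the simplex $\Delta(\X)$ satisfies a large-deviation principle at speed $n$ with rate function $I_n(Q)=D_{\KL}(\hat P_n\|Q)$. The support of $\Pi$ is assumed to have full relative interior. Along a path, $\hat P_n\to\mu$ almost surely, so the limiting rate is $D_{\KL}(\mu\|Q)$ restricted to $\mathrm{supp}\,\Pi$.

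The plan is a Laplace-type argument based on the exact multinomial form of the likelihood, in the spirit of Ganesh--O'Connell. Conditionally on $\mu=Q$, the sample is i.i.d.\ $Q$. The likelihood is therefore
\[
Q^{\otimes n}(X^n)=\exp\bigl(-n[H(\hat P_n)+D_{\KL}(\hat P_n\|Q)]\bigr),
\]
where the entropy term does not depend on $Q$. Bayes' rule then gives
\[
\pi_n(A)=\frac{\int_A e^{-nD_{\KL}(\hat P_n\|Q)}\,\Pi(dQ)}{\int e^{-nD_{\KL}(\hat P_n\|Q)}\,\Pi(dQ)}.
\]
This identity, the ``inverse'' of the method-of-types computation behind Theorem~\ref{thm:sanov}, is the first step. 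It turns the problem into a Varadhan/Laplace estimate for a tilted prior, with a data-dependent exponent.

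The second step is the upper bound for closed $F$. The numerator is at most $\exp(-n\inf_{F\cap\mathrm{supp}\Pi}D_{\KL}(\hat P_n\|Q))$. For the denominator I will show $-\frac1n\log(\cdot)\to0$. To do so, fix $\varepsilon>0$ and take a neighbourhood $U$ of $\mu$ on which $D_{\KL}(\mu\|\cdot)<\varepsilon$ and which has $\Pi(U)>0$. Such a $U$ exists by the support assumption and because $\mu$ has positive mass on its own support (if $\mu$ puts zero mass on some letters, $D_{\KL}(\mu\|Q)$ is still continuous in $Q$ near $\mu$ within the interior). Since $\hat P_n\to\mu$, eventually $D_{\KL}(\hat P_n\|Q)<2\varepsilon$ on $U$, so the denominator is at least $\Pi(U)e^{-2n\varepsilon}$.

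The third step is the lower bound for open $G$. Around any $Q_0\in G\cap\mathrm{supp}\,\Pi$ with finite rate, take a small ball $B\subset G$. Continuity of $(P,Q)\mapsto D_{\KL}(P\|Q)$ near $(\mu,Q_0)$ gives numerator $\ge\Pi(B)e^{-n(D_{\KL}(\mu\|Q_0)+\varepsilon)}$ eventually. The denominator is at most $1$.

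The main obstacle is uniformity in the upper bound. Because the exponent involves $\hat P_n$ rather than the fixed limit $\mu$, I need $\inf_F D_{\KL}(\hat P_n\|\cdot)\to\inf_F D_{\KL}(\mu\|\cdot)$. This uses joint lower semicontinuity of KL together with compactness of $F\subset\Delta(\X)$, and a little care at boundary points where $Q$ vanishes on letters charged by $\mu$; there the rate is $+\infty$, which is consistent. Stating the result with the empirical rate $D_{\KL}(\hat P_n\|\cdot)$ at finite $n$, as in the statement, sidesteps most of this. In that form the bounds hold up to $e^{o(n)}$ factors coming only from the denominator estimate.
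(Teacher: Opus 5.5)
The paper gives no proof of this theorem. It is stated in Appendix~\ref{app:landscape} and deferred to Ganesh and O'Connell, so there is no internal argument to compare against. Your argument is the standard Ganesh--O'Connell route and is correct: the type identity for the likelihood, an $e^{o(n)}$ lower bound on the normalizer, compactness plus joint lower semicontinuity of $D_{\KL}$ for closed sets, and a small-ball estimate for open sets. It proves the statement in its well-posed form. On a finite alphabet, along almost every path, $(\pi_n)$ satisfies an LDP at speed $n$ with good rate function $Q \mapsto D_{\KL}(\mu\|Q)$ on $\mathrm{supp}\,\Pi$ and $+\infty$ off it. An $n$-dependent rate $D_{\KL}(\hat P_n\|\cdot)$ is not an LDP in the usual sense, so you should state the limiting version and drop your closing remark about keeping the empirical rate.

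Two details deserve a sentence each.

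First, $D_{\KL}$ is not jointly continuous at $(\mu,Q_0)$ on the full product of simplices. Take $\mu=Q_0=\delta_a$ and $P=(1-\eta)\delta_a+\eta\delta_b$; then $D_{\KL}(P\|Q_0)=\infty$. So both your uniform estimate on $U$ and your continuity claim in the lower bound need two things. You must restrict to $P$ with $\mathrm{supp}\,P\subseteq\mathrm{supp}\,\mu$, and choose $U$ and $B$ so that $Q(x)$ is bounded below for $x\in\mathrm{supp}\,\mu$. This restriction is harmless because $\hat P_n\ll\mu$ almost surely.

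Second, the normalizer bound only uses $\mu\in\mathrm{supp}\,\Pi$. Under the exchangeable (de~Finetti) model this holds almost surely, because $\mu\sim\Pi$. A full-support assumption is needed only if $\mu$ is read as a fixed true law.
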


\noindent
Sanov governs the \emph{forward} problem: the cost of observing
$L_n \approx \nu$ under $P_0$ is $\exp(-n\, D_{\KL}(\nu \| P_0))$.
The \emph{inverse} problem reverses the arguments: the posterior mass near
$\mu$ given $L_n$ concentrates as $\exp(-n\, D_{\KL}(L_n \| \mu))$
(Theorem~\ref{thm:inverse_sanov}).
The posterior $\Pi_n$ interpolates from $\Pi_0 = \Pi$ to $\Pi_\infty = \delta_\mu$,
with the Sanov rate function governing both the speed of empirical concentration
and the speed of posterior contraction.

\noindent\textit{Martingale posteriors.}
\citet{FongHolmesWalker2023} recast Bayesian inference by requiring the
posterior process $(\Pi_n)_{n \ge 0}$ to be a martingale in the space of
probability measures: $\E[\Pi_{n+1} | X^n] = \Pi_n$.
By the martingale convergence theorem, $\Pi_n \to \delta_\mu$ almost surely,
recovering the de Finetti directing measure.
Each step of the E-process multiplies by the ratio of the martingale posterior
predictive to the null predictive, $p(X_{n+1} | X^n)/p_0(X_{n+1} | X^n)$,
revealing the E-process and the martingale posterior as two sides of the same
structure.

\noindent\textit{Three deviation regimes and Bayesian conservatism.}
At intermediate scales between the CLT and large deviations,
\citet{EichelsbacherGanesh2002} show that the posterior satisfies a moderate
deviation principle with quadratic rate function
$\tfrac{1}{2}(\theta - \theta_0)^\top \mathcal{I}(\theta_0)(\theta - \theta_0)$,
where $\mathcal{I}(\theta_0)$ is the Fisher information, the Hessian of
$D_{\KL}$ at $P_0$.
Classical (Neyman--Pearson) tests and E-values operate in the
\emph{large-deviation} regime at full KL rate; Bayesian tests operate in the
\emph{moderate-deviation} regime at Fisher-information rate, contracting more
slowly and retaining higher posterior mass on the null \citep{Lindley1957, Lindley1961}.
The $O(\sqrt{\log b})$ correction in Theorem~\ref{thm:moderate_deviation} is
the first-passage refinement of this moderate-deviation geometry; the
PAC-Bayes complexity term $D_{\KL}(\hat{\rho}_n \|\pi)$ in
Proposition~\ref{prop:pac_bayes_bridge} is its Bayesian dual.

\section{Proofs of Main Results}
\label{app:proofs}

\subsection{Uniqueness of the Log-Score Transformation}
\label{app:log_uniqueness}

\begin{theorem}[Uniqueness of the additive likelihood transform]
\label{thm:log_uniqueness}
Let $\phi: (0,\infty) \to \R$ be continuous and strictly monotone with $\phi(ab) = \phi(a) + \phi(b)$ for all $a,b > 0$. Then $\phi(x) = c \log x$ for some $c \ne 0$.
\end{theorem}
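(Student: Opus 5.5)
The plan is the classical reduction of the multiplicative Cauchy equation to the additive one via the exponential map, and then the standard continuity argument.

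\emph{Reduction.} Define $g:\R\to\R$ by $g(u) := \phi(e^u)$. Since $\exp:\R\to(0,\infty)$ is a continuous bijection, $g$ is continuous. The hypothesis $\phi(ab)=\phi(a)+\phi(b)$ with $a=e^u$, $b=e^v$ becomes
\[
g(u+v)=g(u)+g(v) \qquad \text{for all } u,v\in\R .
\]
So it suffices to show that a continuous solution of Cauchy's additive equation is linear, $g(u)=cu$ with $c := g(1)$. Then $\phi(x)=g(\log x)=c\log x$.

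\emph{Linearity on the rationals.} Setting $u=v=0$ gives $g(0)=0$, and setting $v=-u$ gives $g(-u)=-g(u)$. Induction on $n\in\N$ gives $g(nu)=n\,g(u)$. Applying this to $u=1/m$ yields $g(1/m)=g(1)/m$, and hence $g(q)=cq$ for every $q\in\mathbb{Q}$.

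\emph{Extension to $\R$.} This is the only step with real content: it is where continuity is used, and without continuity there exist pathological Hamel-basis solutions. Given $u\in\R$, choose rationals $q_k\to u$. Continuity of $g$ gives $g(u)=\lim_k g(q_k)=\lim_k c\,q_k=cu$.

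\emph{Nondegeneracy.} If $c=0$, then $\phi\equiv 0$, which contradicts strict monotonicity. Hence $c\neq 0$; moreover $c>0$ when $\phi$ is increasing and $c<0$ when $\phi$ is decreasing.

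In fact strict monotonicity alone suffices, since a monotone additive function is linear on the rationals and sandwiched between linear functions, so it is linear. Continuity is therefore redundant, but it is the most direct route. Since either hypothesis closes the gap, I expect no genuine obstacle.
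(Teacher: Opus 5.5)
Your proof is correct and follows the same route as the paper: substitute $\psi(t)=\phi(e^t)$ to obtain Cauchy's additive equation, use continuity to force $\psi(t)=ct$, and use strict monotonicity to rule out $c=0$. You spell out the rationals-then-density step that the paper treats as standard, and your side remark that monotonicity alone would suffice is also correct.
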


\begin{proof}
Define $\psi(t) := \phi(e^t)$. Then $\psi(s+t) = \psi(s) + \psi(t)$, Cauchy's functional equation. Continuity forces $\psi(t) = ct$, so $\phi(x) = c \log x$. Strict monotonicity requires $c \ne 0$.
\end{proof}

\begin{corollary}
\label{cor:log_uniqueness}
The negative logarithm $x \mapsto -\log x$ is the unique continuous order-preserving transform (up to positive scaling) converting multiplicative likelihood ratios to additive evidence.
\end{corollary}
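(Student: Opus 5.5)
The corollary follows directly from Theorem~\ref{thm:log_uniqueness}; the only work is to state precisely what a ``transform converting multiplicative likelihood ratios to additive evidence'' must satisfy.

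\emph{Formalizing the requirement.} A candidate transform $\phi:(0,\infty)\to\R$ is required to be:
\begin{itemize}
\item continuous;
\item order-preserving, in the sense that larger likelihood ratios, or larger probabilities in the log-score orientation, receive correspondingly ordered evidence, so in particular $\phi$ is strictly monotone;
\item additive under products, $\phi(ab)=\phi(a)+\phi(b)$ for all $a,b>0$.
\end{itemize}
Additivity under products is exactly the property needed for the sequential decomposition in Section~\ref{sec:fubini}, where $\Lambda_n=\prod_t \lambda_t$ must become $W_n=\sum_t \phi(\lambda_t)$. It is also property~(1) of Proposition~\ref{prop:monoidal}.

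\emph{Applying the theorem.} With these hypotheses, Theorem~\ref{thm:log_uniqueness} gives $\phi(x)=c\log x$ for some $c\neq 0$. The orientation then fixes the sign of $c$:
\begin{itemize}
\item For the loss transform $\mathcal{L}$ of Definition~\ref{def:log_score_map}, a higher probability must give a lower loss. So $\phi$ is decreasing, $c<0$, and $\phi=|c|\cdot(-\log x)$.
\item For the evidence transform applied to likelihood ratios, the sign convention runs the other way, giving $c>0$ and Good's weight of evidence $W=\log\Lambda$ (Definition~\ref{def:woe}).
\end{itemize}
In both cases the transform is $-\log$ up to a positive scalar and the sign convention, which is precisely the ``up to positive scaling'' claim.

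\emph{Main obstacle.} The only delicate point is interpretive: making sure the order-preserving hypothesis delivers strict monotonicity. A merely weakly monotone additive $\phi$ could be identically zero, but the $c=0$ case is excluded by strictness. This needs a one-line remark identifying order-preservation with strict monotonicity.

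If one wanted to drop continuity and keep only monotonicity, a further step would be needed. A monotone solution of Cauchy's equation $\psi(s+t)=\psi(s)+\psi(t)$ is bounded on intervals and therefore linear. The proof of Theorem~\ref{thm:log_uniqueness} can be adapted accordingly, but continuity is assumed, so this is not required here.
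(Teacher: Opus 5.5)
Your proposal is correct and follows the paper's route: the corollary is an immediate specialization of Theorem~\ref{thm:log_uniqueness} (Cauchy's equation via $\psi(t)=\phi(e^t)$), with strict monotonicity excluding $c=0$. You read ``order-preserving'' as strict monotonicity and let the loss-versus-evidence orientation fix the sign of $c$; that is the right way to reconcile the statement's $-\log$ with its ``positive scaling'' clause, a point the paper leaves implicit.
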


\subsection{Proof of Theorem~\ref{thm:canonical_lr}}

Under log-loss Bayes risk, the expected loss difference is
\[
\E_{\Pi}\!\left[\loss_{P_0}(X^n) - \loss_{P_1}(X^n)\right] = \E_{\Pi}\!\left[\log \frac{P_1(X^n)}{P_0(X^n)}\right].
\]
Applying Fubini/Tonelli to swap the prior and data integrals yields a pointwise posterior decision rule. The optimal rejection region is $\Lambda_n(X^n) > \tau$ where $\tau = \pi_0 L_{10} / (\pi_1 L_{01})$. The likelihood ratio $\Lambda_n$ is therefore the sufficient statistic for the Bayes-optimal decision, establishing canonicality within the coherent predictive subclass. \hfill$\BlackBox$

\section{Evidence-Class Algebra}
\label{app:algebra}

\begin{proposition}[Convex closure]
\label{prop:app_convex}
If $(E^{(1)}_t)$ and $(E^{(2)}_t)$ are E-processes and $\lambda \in [0,1]$, then $E_t := \lambda E^{(1)}_t + (1-\lambda) E^{(2)}_t$ is an E-process.
\end{proposition}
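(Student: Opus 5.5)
The plan is to verify the three defining requirements of Definition~\ref{def:eproc} directly for $E_t := \lambda E^{(1)}_t + (1-\lambda) E^{(2)}_t$: nonnegativity and adaptedness, integrability with the supermartingale inequality under $H_0$, and the initial-mass condition $\E_{H_0}[E_0] \le 1$. No earlier result beyond the definition and basic properties of conditional expectation is needed; this is the detailed version of Theorem~\ref{thm:class_algebra_informal}(a) for two components.

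First, nonnegativity is immediate because $\lambda, 1-\lambda \ge 0$ and both $E^{(i)}_t \ge 0$. Adaptedness follows since each $E^{(i)}_t$ is $\F_t$-measurable and linear combinations of $\F_t$-measurable variables are $\F_t$-measurable. Integrability follows from $\E_{H_0}[E_t] \le \lambda \E_{H_0}[E^{(1)}_t] + (1-\lambda)\E_{H_0}[E^{(2)}_t] \le \lambda \E_{H_0}[E^{(1)}_0] + (1-\lambda)\E_{H_0}[E^{(2)}_0] \le 1$. Here I use the supermartingale property of each component iterated from time $0$, via the tower property.

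Second, for the supermartingale step I take $s \le t$, or $s = t-1$ in discrete time. Linearity and monotonicity of conditional expectation give
\[
\E_{H_0}[E_t \mid \F_s] = \lambda \E_{H_0}[E^{(1)}_t \mid \F_s] + (1-\lambda)\E_{H_0}[E^{(2)}_t \mid \F_s] \le \lambda E^{(1)}_s + (1-\lambda) E^{(2)}_s = E_s
\]
almost surely. The only subtlety is that the two almost-sure inequalities hold off null sets, and a union of two null sets is null. Finally, $\E_{H_0}[E_0] = \lambda \E_{H_0}[E^{(1)}_0] + (1-\lambda)\E_{H_0}[E^{(2)}_0] \le \lambda + (1-\lambda) = 1$.

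There is no genuine obstacle. The step deserving the most care is integrability, which makes the conditional expectations well defined. Nonnegativity would in any case permit generalized conditional expectations, and the bound above shows finiteness directly. I would also remark that by induction the argument extends to finite convex combinations. Countable combinations would instead use conditional monotone convergence, because the summands are nonnegative; that route prepares the Bayesian-mixture case (b).
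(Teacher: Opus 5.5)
Your proposal is correct and follows the same route as the paper: nonnegativity from $\lambda,1-\lambda\ge 0$, then linearity and monotonicity of conditional expectation to get $\E_{H_0}[E_t\mid\F_{t-1}]\le E_{t-1}$. You also check adaptedness, integrability, and the initial condition $\E_{H_0}[E_0]\le 1$, which the paper's two-line proof leaves implicit.
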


\begin{proof}
Each $E^{(i)}_t \ge 0$ and $\lambda \ge 0$, so $E_t \ge 0$. By linearity,
$\E_{H_0}[E_t | \F_{t-1}] = \lambda \E_{H_0}[E^{(1)}_t | \F_{t-1}] + (1-\lambda) \E_{H_0}[E^{(2)}_t | \F_{t-1}] \le \lambda E^{(1)}_{t-1} + (1-\lambda) E^{(2)}_{t-1} = E_{t-1}$.
\end{proof}

\begin{proposition}[Predictable stopping]
\label{prop:app_stopping}
If $(E_t)$ is an E-process and $\tau$ a stopping time, then $(E_{t \wedge \tau})$ is an E-process.
\end{proposition}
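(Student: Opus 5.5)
The plan is to check the three defining properties of an E-process (Definition~\ref{def:eproc}) for the stopped process $E^\tau_t := E_{t\wedge\tau}$: nonnegativity and adaptedness, the supermartingale inequality under $H_0$, and the initial condition. Nonnegativity is immediate because $E_{t\wedge\tau}$ is always one of the values $E_s \ge 0$. The initial condition is also immediate: $E^\tau_0 = E_0$ (when $\tau \ge 0$), so $\E_{H_0}[E^\tau_0] = \E_{H_0}[E_0] \le 1$. For adaptedness I will write
\[
E_{t\wedge\tau} = \sum_{s<t} E_s \1\{\tau = s\} + E_t \1\{\tau \ge t\}.
\]
Each summand is $\F_t$-measurable, since $\{\tau = s\} \in \F_s \subseteq \F_t$ for $s<t$ and $\{\tau \ge t\} = \{\tau \le t-1\}^c \in \F_{t-1}$.

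The substantive step is the one-step supermartingale inequality. I will use the telescoping identity
\[
E_{t\wedge\tau} - E_{(t-1)\wedge\tau} = \1\{\tau \ge t\}\,(E_t - E_{t-1}).
\]
This holds because on $\{\tau \le t-1\}$ both stopped values equal $E_\tau$, and on $\{\tau \ge t\}$ they equal $E_t$ and $E_{t-1}$. The indicator $\1\{\tau\ge t\}$ is $\F_{t-1}$-measurable, so it is predictable, and this is exactly where the stopping-time property enters. Taking conditional expectations and pulling out the predictable factor gives
\[
\E_{H_0}[E_{t\wedge\tau} - E_{(t-1)\wedge\tau}\mid\F_{t-1}] = \1\{\tau\ge t\}\bigl(\E_{H_0}[E_t\mid\F_{t-1}] - E_{t-1}\bigr) \le 0,
\]
by the supermartingale property of $(E_t)$.

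The main obstacle is integrability, since conditional expectations must be well defined. I will handle it in one of two ways. The first option is to note that $0 \le E_{t\wedge\tau} \le \sum_{s\le t} E_s$, and each $E_s$ is integrable: a nonnegative supermartingale satisfies $\E[E_s] \le \E[E_0] \le 1$ by iterating the tower property. The second option, which avoids integrability issues altogether, is to work with generalized conditional expectations of nonnegative variables, splitting $E_{t\wedge\tau}$ into the nonnegative parts $E_{(t-1)\wedge\tau}\1\{\tau\le t-1\}$ and $E_t\1\{\tau\ge t\}$. In continuous time I would instead invoke the optional stopping theorem for right-continuous nonnegative supermartingales. Since the paper's setting is effectively discrete (filtration indexed by $t$ with $\F_{t-1}$), I will present the discrete argument and remark on the continuous-time extension.
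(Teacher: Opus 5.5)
Your proof is correct and takes essentially the paper's approach: both arguments use $\{\tau \ge t\}\in\F_{t-1}$ to pull the indicator out of $\E_{H_0}[\,\cdot\mid\F_{t-1}]$, apply the supermartingale inequality on $\{\tau\ge t\}$, and use that the process is frozen at $E_\tau$ on $\{\tau\le t-1\}$. Your increment identity is the paper's level decomposition rewritten (your ``second option'' is exactly the paper's split), and your checks of adaptedness, $\E_{H_0}[E_0]\le 1$, and integrability supply details the paper leaves implicit.
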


\begin{proof}
Define $\tilde{E}_t := E_{t \wedge \tau}$. For $t \le \tau$, $\tilde{E}_t = E_t$. For $t > \tau$, $\tilde{E}_t = E_\tau$. The supermartingale property carries over: $\E_{H_0}[\tilde{E}_t | \F_{t-1}] = \1\{t \le \tau\}\E_{H_0}[E_t | \F_{t-1}] + \1\{t > \tau\}E_\tau \le \1\{t \le \tau\}E_{t-1} + \1\{t > \tau\}E_\tau = \tilde{E}_{t-1}$.
\end{proof}

\begin{proposition}[Countable stitching]
\label{prop:app_stitching}
Let $\{(E^{(k)}_t)\}_{k \in \N}$ be E-processes and $\{\lambda_k\}$ nonneg weights with $\sum_k \lambda_k \le 1$. Then $E_t := \sum_k \lambda_k E^{(k)}_t$ is an E-process.
\end{proposition}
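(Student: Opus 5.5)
The plan is to reduce the countable case to the finite convex-closure argument of Proposition~\ref{prop:app_convex}, using monotone convergence to pass to the limit. Fix E-processes $(E^{(k)}_t)_{k\in\N}$ and nonnegative weights with $\sum_k \lambda_k \le 1$. Set $E_t := \sum_k \lambda_k E^{(k)}_t$, which takes values in $[0,\infty]$ a priori. Nonnegativity is immediate. Adaptedness follows because $E_t$ is the increasing limit of the $\F_t$-measurable partial sums $E^{N}_t := \sum_{k\le N}\lambda_k E^{(k)}_t$, and a monotone limit of measurable functions is measurable.

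The key step is the supermartingale inequality. For each $N$, linearity of conditional expectation gives
\[
\E_{H_0}[E^N_t \mid \F_{t-1}] = \sum_{k\le N}\lambda_k \E_{H_0}[E^{(k)}_t\mid\F_{t-1}] \le \sum_{k\le N}\lambda_k E^{(k)}_{t-1} \le E_{t-1},
\]
exactly as in Proposition~\ref{prop:app_convex}. Since $E^N_t \uparrow E_t$ and all terms are nonnegative, the conditional monotone convergence theorem gives $\E_{H_0}[E^N_t\mid\F_{t-1}] \uparrow \E_{H_0}[E_t\mid\F_{t-1}]$ almost surely. Passing to the limit in the display yields $\E_{H_0}[E_t\mid\F_{t-1}]\le E_{t-1}$. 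This is the same Fubini/Tonelli interchange used for Bayesian mixtures in Theorem~\ref{thm:class_algebra_informal}(b), with counting measure weighted by $\lambda_k$ playing the role of $\pi$.

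For the initial condition, Tonelli gives $\E_{H_0}[E_0] = \sum_k\lambda_k\E_{H_0}[E^{(k)}_0] \le \sum_k\lambda_k \le 1$.

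The main obstacle is integrability and finiteness, since a supermartingale is usually required to be integrable and finite-valued. I would handle this with the same Tonelli computation at each time: $\E_{H_0}[E_t] = \sum_k\lambda_k\E_{H_0}[E^{(k)}_t] \le \sum_k\lambda_k\E_{H_0}[E^{(k)}_0]\le 1$. Hence $E_t\in L^1$, and in particular $E_t<\infty$ almost surely, so we may modify it on a null set to be real-valued. If one prefers to avoid null-set modifications, the nonnegative generalized conditional expectation is well defined for $[0,\infty]$-valued variables, so the argument goes through regardless. This establishes $(E_t)\in\mathcal{E}^{\mathrm{proc}}(H_0)$.
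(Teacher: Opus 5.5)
Your proof is correct and takes the same route as the paper, whose entire proof is the single line ``by monotone convergence and the supermartingale property of each component.'' You fill in exactly what that line compresses: linearity on the partial sums, conditional monotone convergence to reach $\E_{H_0}[E_t \mid \F_{t-1}] \le E_{t-1}$, and Tonelli to get $\E_{H_0}[E_0] \le 1$ and $\E_{H_0}[E_t] \le 1$, so $E_t$ is integrable and a.s.\ finite, a point the paper leaves implicit.
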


\begin{proof}
By monotone convergence and the supermartingale property of each component.
\end{proof}

\begin{proposition}[Layer separation]
\label{prop:layer_separation}
The following are logically distinct:
\begin{enumerate}
\item A likelihood-ratio representation $E = dQ/dP$ (representation layer);
\item A supermartingale property $\E_P[E_t | \F_{t-1}] \le E_{t-1}$ (validity layer);
\item A boundary rule $\tau = \inf\{t : E_t \ge b\}$ (decision layer).
\end{enumerate}
Each may be specified independently; optimality at one layer does not imply optimality at another.
\end{proposition}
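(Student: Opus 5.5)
We read ``logically distinct'' as pairwise non-implication. The plan is to exhibit, for each ordered pair of layers, a concrete Bernoulli object that has the first property but not the second. All examples live on $\X=\{0,1\}$ with $P=P_0=\mathrm{Bern}(1/2)$, so that every claim reduces to a one-step computation already carried out in the paper. Each example is built from a result we may assume, so no new machinery is needed.

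\emph{Representation $\not\Rightarrow$ validity.} A ratio of the form $dQ/dP$ is only a supermartingale under the measure in its denominator. I would take $E_t=\prod_s q(X_s)/p(X_s)$ with $q=\mathrm{Bern}(0.65)$ and $p=\mathrm{Bern}(1/2)$. I would then evaluate the conditional expectation under a different law $P'=\mathrm{Bern}(0.65)$, which is the composite/misspecified-null situation. There the one-step factor has $P'$-mean $\E_{P'}[q/p]=\sum_x q(x)^2/p(x)>1$ by Cauchy--Schwarz, so supermartingality fails. A second witness at the level of the process is the horizon-$N$ NML ratio of Example~\ref{ex:bernoulli_nml}. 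It is a genuine ratio of two probability measures at each fixed $n$, yet it violates the criterion of Theorem~\ref{thm:seq_liftability}.

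\emph{Validity $\not\Rightarrow$ representation.} Here I would use the Brier process of Remark~\ref{rem:brier}. It is a $P_0$-supermartingale, but $\E_{P_0}[E_1]\approx 0.995<1$. Any $dQ/dP_0$ with $Q$ a probability measure equivalent to $P_0$ has $P_0$-mean exactly $1$, so this process is not of that form. Alternatively one can use the stopped martingale from Proposition~\ref{prop:validity_vs_lr}(i), or the mixture $\tfrac12E^{(1)}+\tfrac12E^{(2)}$ of Example~\ref{ex:non_closure_max}, which equals $1$ identically at $t=1$ but is not the LR against any fixed alternative. \emph{Decision $\not\Rightarrow$ validity.} A boundary rule $\inf\{t:E_t\ge b\}$ is a stopping time for any adapted $E$. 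Applying it to the unnormalized ML ratio of Section~\ref{sec:exp_type1} yields a well-defined rule whose Type~I error exceeds $1/b$. I would argue this analytically: at $t=1$ the ML ratio equals $2$ surely, so $b=2$ rejects with probability one. Conversely, \emph{validity $\not\Rightarrow$ decision} holds because Ville's inequality is uniform over $b$ and therefore singles out no boundary.

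For the optimality clause I would give three non-implications. (a) The LR is representation-optimal by Theorem~\ref{thm:canonical_lr}. Pairing it with any Ville threshold $b\ne c^\star=\pi_0L_{10}/(\pi_1L_{01})$ gives strictly larger Bayes risk whenever the posterior-threshold rule is the unique minimizer, so representation optimality does not give decision optimality. (b) A stopped martingale is a perfectly valid member of $\mathcal{E}^{\mathrm{proc}}(H_0)$, but by Proposition~\ref{prop:validity_vs_lr}(i) it has zero growth, so $\E_{P_1}[\tau_b]=\infty$ for $b$ above its range. This contrasts with the $(\log b)/\mu$ rate of Theorem~\ref{thm:moderate_deviation}, so validity optimality does not give efficiency. (c) The Bayes cutoff $c^\star$ applied to a non-valid process loses the $1/c^\star$ guarantee, as in the ML-ratio example. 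The main obstacle is (a). I must make precise that the Bayes risk, as a function of the threshold, is uniquely minimized at $c^\star$; the plan is to show this for the two-point Bernoulli model by direct comparison of the two discrete risks at $n=1$.
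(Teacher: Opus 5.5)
Your route is the paper's. Its proof is a three-sentence sketch: a ratio under a misspecified law need not be a supermartingale, convex mixtures need not be a single LR, and $b$ is a free parameter. Your counterexamples make each of these concrete, and three of them are correct: the $\chi^2$/Cauchy--Schwarz computation, the Brier witness, and the ML ratio equal to $2$ at $t=1$. Two steps do not work as written.

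\emph{The alternative witnesses for validity $\not\Rightarrow$ representation are not witnesses.}
\begin{itemize}
\item On the canonical sequence space, every nonnegative $P_0$-martingale with $E_0=1$ is the density process $dQ/dP_0|_{\F_t}$ of some probability measure $Q$. Set $Q(A)=\E_{P_0}[E_t\1_A]$ for $A\in\F_t$; the martingale property gives consistency, and Kolmogorov extension gives $Q$.
\item The heads/tails mixture is such a density process: $\tfrac12E^{(1)}_t+\tfrac12E^{(2)}_t=2^{t-1}\1\{X_1=\cdots=X_t\}=d\bar Q/dP_0$ on $\F_t$. Here $\bar Q$ tosses one fair coin and repeats it forever. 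It is the LR of the mixture alternative, just as a Bayes factor is the LR of a prior-predictive mixture.
\item The stopped martingale $M_{t\wedge T}$, with $M=dQ/dP_0$, is the LR of the measure that follows $Q$ up to time $T$ and $P_0$ afterwards.
\end{itemize}
So only processes that lose mass separate validity from representation: strict supermartingales such as your Brier process, or simply $\tfrac12\Lambda_t$. Keep the Brier witness and drop the other two. The paper's own convex-mixture witness likewise needs a component that loses mass.

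For the NML witness, make sure you mean $P^{(t)}_{\mathrm{NML}}(X^t)/P_0(X^t)$, with the normalizer $C_t$ changing in $t$. For a fixed horizon $N$, the marginal ratios $P^{(N)}_{\mathrm{NML}}(X^t)/P_0(X^t)$ form a $P_0$-martingale. With the varying normalizer the violation is explicit: at $x^2=(0,1)$, $\sum_{x_3}P^{(3)}_{\mathrm{NML}}(0,1,x_3)/P^{(2)}_{\mathrm{NML}}(0,1)=(4/39)/(1/10)=40/39>1$. This is a sharper representation $\not\Rightarrow$ validity witness than changing the measure. The null is correctly specified, and the failure comes solely from the non-projectivity of $\{P^{(n)}_{\mathrm{NML}}\}$.

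\emph{The plan you flag as the main obstacle in (a) would fail.} In the two-point Bernoulli model at $n=1$, $\Lambda_1\in\{0.7,1.3\}$, so Bayes risk is a step function of the threshold. With $\pi_0=\pi_1$ and $L_{10}=L_{01}$, so that $c^\star=1$, every $b\in(0.7,1.3]$ induces the same rule as $c^\star$ and hence the same risk. Uniqueness in $b$ is therefore false, and it is also unnecessary. You only need one threshold whose rejection region differs from $\{\Lambda_1\ge c^\star\}$ on an atom where the posterior expected losses differ strictly. For example, the Ville threshold $b=20$ never rejects at $n=1$. Its risk is $\tfrac12$, whereas the Bayes rule's risk is $\tfrac12\cdot\tfrac12+\tfrac12\cdot 0.35=0.425$. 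Alternatively, use a continuous model in which $\Lambda_n$ has a positive density near $c^\star$; there the risk is strictly minimized at $c^\star$.
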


\begin{proof}
A ratio $dQ/dP$ is a $P$-martingale when $Q \ll P$, but ratios under misspecified models need not be supermartingales. Conversely, convex mixtures of E-processes need not admit a single LR representation. The boundary $b$ is a free parameter controlling Type~I/power trade-offs independently of validity or representation.
\end{proof}

\section{Computational Boundary: Full Proof}
\label{app:comp_boundary}

\begin{theorem}[No canonical lifting of static codes]
\label{thm:app_no_lifting}
Let $\ell: \X^n \to \R_{\ge 0}$ be a code-length function and $P_0$ a null with predictive kernel $p_0(\cdot | x^{t-1})$. Define $E_t := \exp(-\ell(X^t)) / P_0(X^t)$. If $(E_t)$ is a supermartingale under $P_0$ with $E_0 = 1$, then $\ell$ must factorize sequentially: there exist functions $\ell_t$ with
$\sum_{x_t} p_0(x_t | x^{t-1}) \exp(-\ell_t(x^t) + \ell_{t-1}(x^{t-1})) \le 1$
for all $x^{t-1}$, and $\ell(x^n) = \sum_t [\ell_t(x^t) - \ell_{t-1}(x^{t-1})]$.
\end{theorem}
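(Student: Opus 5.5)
The plan is to take the sequential pieces $\ell_t$ to be the code lengths of prefixes themselves, and to read both required properties off the supermartingale identity already computed in the necessity half of Theorem~\ref{thm:seq_liftability}. Since $E_t = \exp(-\ell(X^t))/P_0(X^t)$ is assumed well defined for every $t$, $\ell$ is in effect defined on all prefixes $x^t$, $t \le n$. I will set $\ell_t(x^t) := \ell(x^t)$ for $1 \le t \le n$. For the empty prefix, the normalization $E_0 = 1$ together with $P_0(x^0) = 1$ forces $\exp(-\ell(x^0)) = 1$, so $\ell_0 := 0$. With these choices the decomposition $\ell(x^n) = \sum_{t=1}^n [\ell_t(x^t) - \ell_{t-1}(x^{t-1})]$ is a telescoping sum: it collapses to $\ell_n(x^n) - \ell_0 = \ell(x^n)$. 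So the additive factorization needs no further work, and the content of the theorem is the kernel inequality.

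For the inequality I will reuse the expansion from the proof of Theorem~\ref{thm:seq_liftability}. Writing $q_t(x_t \mid x^{t-1}) := \exp(-\ell_t(x^t) + \ell_{t-1}(x^{t-1}))$, that expansion gives
\[
\E_{P_0}[E_t \mid \F_{t-1}] = E_{t-1} \sum_{x_t} q_t(x_t \mid X^{t-1}).
\]
The supermartingale hypothesis then yields $\sum_{x_t} q_t(x_t \mid X^{t-1}) \le 1$ wherever $E_{t-1} > 0$, and $E_{t-1} > 0$ always holds because $\exp(-\ell) > 0$. Since $0 \le p_0(x_t \mid x^{t-1}) \le 1$ on a countable alphabet, it follows that
\[
\sum_{x_t} p_0(x_t \mid x^{t-1})\, q_t(x_t \mid x^{t-1}) \le \sum_{x_t} q_t(x_t \mid x^{t-1}) \le 1.
\]
This is the displayed condition of the theorem; it is actually a weaker condition than the unweighted sub-probability bound we obtain.

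The main obstacle is the ``for all $x^{t-1}$'' quantifier. The supermartingale inequality holds only $P_0$-almost surely, so the argument above gives the bound only on prefixes with $P_0(x^{t-1}) > 0$. Because $E_t$ divides by $P_0(X^t)$, the natural route is to assume, implicitly as the paper does, that $P_0$ charges every finite prefix. In that case, on a discrete alphabet, almost-sure statements are pointwise statements and the bound holds everywhere. If zero-probability prefixes are allowed, I would first try to show that redefining $\ell_t$ on those null prefixes (for instance, setting $q_t := p_0$ there) changes neither $E_t$ $P_0$-a.s.\ nor $\ell$ on the support, and then state the result for this modified version. A secondary point is that $\ell$ is nominally defined only on $\X^n$. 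If it is given only at length $n$, I would instead define $\ell_t(x^t) := -\log \sum_{x_{t+1}^n} \exp(-\ell(x^n))$ by marginalization, and check that this is consistent with the prefix values used to define $E_t$.
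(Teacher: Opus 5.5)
Your proposal is correct and is essentially the paper's argument. The paper also cancels the $p_0$ factors in $\E_{P_0}[E_t \mid \F_{t-1}] \le E_{t-1}$ to get $\sum_{x_t} \exp(-\ell(X^{t-1},x_t)+\ell(X^{t-1})) \le 1$, implicitly taking $\ell_t$ to be the prefix code lengths as you do. What you add fills in points the paper leaves implicit: the telescoping with $\ell_0=0$ forced by $E_0=1$, and the step $p_0\le 1$ that passes from this unweighted sub-probability bound to the $p_0$-weighted inequality as literally displayed. As you suspect, that weighted form is much weaker; for $\ell\ge 0$ it already holds with $\ell_t\equiv 0$ for $t<n$, so the unweighted bound is the real content.
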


\begin{proof}
The supermartingale condition $\E_{P_0}[E_t | \F_{t-1}] \le E_{t-1}$ gives, after canceling $p_0$ terms:
\[
\sum_{x_t} \frac{\exp(-\ell(X^{t-1}, x_t))}{\exp(-\ell(X^{t-1}))} \le 1.
\]
Define $q_t(x_t | X^{t-1}) := \exp(-\ell(X^{t-1}, x_t) + \ell(X^{t-1}))$. This is a sub-probability kernel. For NML, the normalizing constant $C_n = \sum_{x'} \max_\theta L(\theta; x')$ depends on the full sample size $n$. The conditional NML at step $t$ depends on future data through $C_n$ and is not $\F_{t-1}$-measurable, violating sequential normalization.
\end{proof}

\begin{corollary}
Regret optimality under static normalization does not imply supermartingale validity under the natural filtration.
\end{corollary}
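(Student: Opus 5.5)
The statement to prove is the final corollary: regret optimality under static normalization does not imply supermartingale validity under the natural filtration. Since this is a non-implication, I will exhibit one explicit static minimax-regret code whose induced process fails to be a $P_0$-supermartingale. The natural witness is the Bernoulli NML code of Example~\ref{ex:bernoulli_nml} with $P_0 = \mathrm{Bern}(1/2)^{\otimes n}$.

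\emph{Regret optimality.} By \citet{Shtarkov1987}, $P^{(n)}_{\mathrm{NML}}$ attains $\min_\ell \max_{x^n}[\ell(x^n) + \log P_{\hat\theta}(x^n)] = \log C_n$. So the code $\ell_n = -\log P^{(n)}_{\mathrm{NML}}$, normalized at each fixed length $n$, is regret-optimal.

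\emph{Defining the process.} There are two readings of ``static normalization'', and I will show failure under both.

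First reading: take $\ell(x^t) := -\log P^{(t)}_{\mathrm{NML}}(x^t)$ for every $t$, so that $E_t = P^{(t)}_{\mathrm{NML}}(X^t)/P_0(X^t)$. By Theorem~\ref{thm:seq_liftability}, $(E_t)$ is an E-process iff $\sum_{x_t} q_t(x_t\mid x^{t-1}) \le 1$, where $q_t(x_t \mid x^{t-1}) = P^{(t)}_{\mathrm{NML}}(x^t)/P^{(t-1)}_{\mathrm{NML}}(x^{t-1})$. I will check the failure at $t=2$ with $x_1=0$, using $C_1=2$ and $C_2=5/2$. Since $P^{(1)}_{\mathrm{NML}}(0)=1/2$, $P^{(2)}_{\mathrm{NML}}(0,0)=1/C_2=2/5$, and $P^{(2)}_{\mathrm{NML}}(0,1)=(1/4)/C_2=1/10$, the sum is
\[
\tfrac{2/5+1/10}{1/2}=1.
\]
That is only equality, so $t=2$ gives no violation. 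I therefore expect the sum to exceed $1$ only for a suitable later prefix; for example, at $t=3$ after $x^2=(0,0)$ one has $\sum_{x_3} q_3 = (1/C_3 + (4/27)/C_3)/(2/5)$. This step is the main obstacle: the first few ratios may all be $\le 1$, in which case I need a better witness.

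The fallback witness is the comparison $\E_{P_0}[E_t] = 2^t \cdot 2^{-t} \sum_{x^t} P^{(t)}_{\mathrm{NML}}(x^t) = 1$ for every $t$, so the expectation is constant. The supermartingale property would then force $\E[E_t\mid\F_{t-1}] = E_{t-1}$ almost surely, i.e.\ exact martingale consistency $\sum_{x_t} P^{(t)}_{\mathrm{NML}}(x^{t-1},x_t) = P^{(t-1)}_{\mathrm{NML}}(x^{t-1})$ for every prefix. Table~\ref{tab:nml_horizon} shows that NML marginals are not consistent: the conditional drifts from $0.800$ to $0.795$. Concretely, $\sum_{x_3} P^{(3)}(0,0,x_3) \ne P^{(2)}(0,0)$, because $P^{(2)}(0,0)=0.4$ while $q^{(3)}_2(0\mid 0)\approx 0.795$ implies a different value. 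So some prefix must have a conditional sum strictly greater than $1$, and supermartingality fails.

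Second reading: fix a horizon $N$ and use the marginals of $P^{(N)}_{\mathrm{NML}}$. Then $(E_t)_{t\le N}$ is a martingale, but its factors $q^{(N)}_t$ depend on $N$ (Table~\ref{tab:nml_horizon}). Hence no single $\F_{t-1}$-measurable kernel works for all horizons, and the process indexed over $t\ge1$, which is what anytime validity requires, is not defined consistently. Combining the two readings with Theorem~\ref{thm:app_no_lifting} gives the corollary.
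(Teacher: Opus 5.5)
Your proof is correct and uses the same witness as the paper: Bernoulli NML with the horizon drift of Table~\ref{tab:nml_horizon}. The step that turns that drift into a failure of supermartingality is different, though.

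The paper (proof of Theorem~\ref{thm:app_no_lifting}, Example~\ref{ex:bernoulli_nml}) argues that because $C_n$ depends on $n$, the conditional NML factor is not $\F_{t-1}$-measurable. Read literally, this is loose. For a fixed horizon $N$ the factor $q^{(N)}_t(\cdot \mid x^{t-1})$ is a deterministic function of $x^{t-1}$. As you correctly note, the fixed-horizon marginal process is an honest $P_0$-martingale for $t \le N$.

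Your route takes the per-length code $E_t = P^{(t)}_{\mathrm{NML}}(X^t)/P_0(X^t)$ and observes that $\E_{P_0}[E_t] = 1$ for all $t$. Supermartingality would then force a martingale, hence Kolmogorov consistency of the NML family. The drift $q^{(2)}_2(0\mid 0) = 4/5 \neq 31/39 = q^{(3)}_2(0\mid 0)$ contradicts consistency. This exhibits an actual violated inequality, which the paper's measurability phrasing does not.

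A few smaller points:
\begin{itemize}
\item The hedge about the ``main obstacle'' is unnecessary. At $t = 3$ the prefix $x^2 = (0,1)$ gives $\sum_{x_3} q_3(x_3 \mid 0,1) = (4/39)/(1/10) = 40/39 > 1$, whereas $(0,0)$ gives $155/156$. This matches your averaging identity $\sum_{x^{t-1}} P^{(t-1)}_{\mathrm{NML}}(x^{t-1}) \sum_{x_t} q_t(x_t \mid x^{t-1}) = 1$, so the violation is explicit.
\item The inference from $q^{(3)}_2(0\mid 0) \approx 0.795$ to $\sum_{x_3} P^{(3)}_{\mathrm{NML}}(0,0,x_3) \neq 2/5$ silently uses $\sum_{x_2,x_3} P^{(3)}_{\mathrm{NML}}(0,x_2,x_3) = 1/2$, which holds by the $x \mapsto 1-x$ symmetry. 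Computing $31/78$ directly is cleaner.
\item Your second reading does not refute validity on its own; it only shows there is no horizon-free kernel. The corollary therefore rests on the first reading.
\end{itemize}
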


\section{KL Growth Rate: Full Statement}
\label{app:kl_growth}

\begin{proposition}[KL growth rate, formal statement]
\label{prop:app_kl_growth}
Let $P_1 \ll P_0$ with $D_{\KL}(P_1 \| P_0) < \infty$. Let $X_1, X_2, \ldots$ be i.i.d.\ under $P_1$ and $E_n := \prod_{i=1}^n (dP_1/dP_0)(X_i)$.
Then $\frac{1}{n} \log E_n \xrightarrow{\mathrm{a.s.}} D_{\KL}(P_1 \| P_0)$ under $P_1$.
Under misspecification with $X_i \sim P_{\mathrm{true}}$:
$\frac{1}{n} \log E_n \xrightarrow{\mathrm{a.s.}} D_{\KL}(P_{\mathrm{true}} \| P_0) - D_{\KL}(P_{\mathrm{true}} \| P_1)$.
\end{proposition}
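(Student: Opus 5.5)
The plan is to write $\log E_n = \sum_{i=1}^n Y_i$ with $Y_i := \log (dP_1/dP_0)(X_i)$ and apply Kolmogorov's strong law of large numbers to these i.i.d.\ summands. For this we must check that $Y_1$ is integrable under the data-generating law and identify its mean. The work is therefore entirely in the integrability bookkeeping; the limit itself is the SLLN.

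\emph{Correct specification.} Under $P_1$ the mean is $\E_{P_1}[Y_1] = \int \log(dP_1/dP_0)\,dP_1 = D_{\KL}(P_1\|P_0)$, which is finite by hypothesis. Finiteness of the expectation alone does not give integrability, because the negative part could be infinite while the positive part is too. I will rule this out using $\log x \le x-1$:
\[
\E_{P_1}[Y_1^-] = \E_{P_1}\bigl[\log^+ (dP_0/dP_1)\bigr] \le \E_{P_1}\bigl[(dP_0/dP_1 - 1)^+\bigr] \le \int dP_0 = 1 .
\]
Here $dP_0/dP_1$ is understood on the set where $dP_1/dP_0>0$, which carries full $P_1$-mass. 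Hence $Y_1^-$ is integrable. Since $D_{\KL}<\infty$, the positive part $Y_1^+$ is integrable as well, so $Y_1\in L^1(P_1)$. The SLLN then gives $n^{-1}\log E_n \to D_{\KL}(P_1\|P_0)$ $P_1$-a.s. The value $E_n = 0$ has $P_1$-probability zero, so $\log E_n$ is a.s.\ finite.

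\emph{Misspecification.} Under $P_{\mathrm{true}}$ I will decompose pointwise:
\[
Y_1 = \log\frac{dP_{\mathrm{true}}}{dP_0}(X_1) - \log\frac{dP_{\mathrm{true}}}{dP_1}(X_1).
\]
Taking $P_{\mathrm{true}}$-expectations term by term gives $D_{\KL}(P_{\mathrm{true}}\|P_0) - D_{\KL}(P_{\mathrm{true}}\|P_1)$. This requires extra standing hypotheses, which I will state explicitly as implicit in the proposition: $P_{\mathrm{true}} \ll P_0$, $P_{\mathrm{true}} \ll P_1$, and both divergences finite. Under these hypotheses each log-density ratio lies in $L^1(P_{\mathrm{true}})$, by the same $\log x\le x-1$ argument applied to each term. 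Therefore $Y_1\in L^1(P_{\mathrm{true}})$, the expectations split linearly, and the SLLN yields the claimed a.s.\ limit. For the edge case where $D_{\KL}(P_{\mathrm{true}}\|P_0)=\infty$ but the other divergence is finite, the SLLN for random variables with $\E Y^+=\infty$, $\E Y^-<\infty$ gives the limit $+\infty$, matching the formula; I will remark on this but not dwell on it.

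The only real obstacle is the misspecified case. There the formula is not meaningful without absolute-continuity and finiteness conditions, and I need to ensure the pointwise decomposition holds $P_{\mathrm{true}}$-a.s. This uses the chain rule for Radon--Nikod\'ym derivatives, $dP_1/dP_0 = (dP_{\mathrm{true}}/dP_0)/(dP_{\mathrm{true}}/dP_1)$ on the support of $P_{\mathrm{true}}$. I will verify it off a $P_{\mathrm{true}}$-null set before invoking linearity of expectation.
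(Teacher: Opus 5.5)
Your proposal is correct and follows the same route as the paper: apply the strong law to the i.i.d.\ increments $\log(dP_1/dP_0)(X_i)$ and identify their mean as $D_{\KL}(P_1\|P_0)$, respectively $D_{\KL}(P_{\mathrm{true}}\|P_0)-D_{\KL}(P_{\mathrm{true}}\|P_1)$. The paper simply asserts these means, so your additions fill in details it leaves implicit: the integrability check via $\log x\le x-1$, and the explicit absolute-continuity and finite-divergence hypotheses for the misspecified case (where $P_{\mathrm{true}}\ll P_0$ is already implied by $P_{\mathrm{true}}\ll P_1\ll P_0$).
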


\begin{proof}
Write $\frac{1}{n} \log E_n = \frac{1}{n} \sum_{i=1}^n \log(dP_1/dP_0)(X_i)$. Under $P_1$, the summands are i.i.d.\ with mean $D_{\KL}(P_1 \| P_0)$; the strong law gives the first claim.
Under $P_{\mathrm{true}}$, the mean is $\E_{P_{\mathrm{true}}}[\log(dP_1/dP_0)(X)] = D_{\KL}(P_{\mathrm{true}} \| P_0) - D_{\KL}(P_{\mathrm{true}} \| P_1)$.
\end{proof}

\vskip 0.2in
\bibliographystyle{plainnat}
\bibliography{evalues_bayes_testing}

\end{document}